\newcommand{\stkout}[1]{\ifmmode\text{\sout{\ensuremath{#1}}}\else\sout{#1}\fi}
\theoremstyle{plain}
\newtheorem{theorem}{Theorem}[section]
\newtheorem{proposition}[theorem]{Proposition}
\newtheorem{lemma}[theorem]{Lemma}
\newtheorem{corollary}[theorem]{Corollary}
\theoremstyle{definition}
\newtheorem{definition}[theorem]{Definition}
\newtheorem{remark}[theorem]{Remark}
\newtheorem{example}[theorem]{Example}
\numberwithin{equation}{section}
\newcommand{\Prob}{\mathcal{P}}
\newcommand{\R}{\mathbb{R}}
\newcommand{\N}{\mathbb{N}}
\newcommand{\Rd}{\mathbb{R}^d}
\newcommand{\Leb}{\mathcal{L}}
\newcommand{\eps}{\varepsilon}
\newcommand{\kcal}{\mathcal{K}}
\newcommand{\ical}{\mathcal{I}}
\newcommand{\deps}{\mathcal{D}_\eps}
\newcommand{\depsN}{\mathcal{D}_\eps^N}
\newcommand{\peps}{\mathcal{C}_\eps}
\newcommand{\pepsN}{\mathcal{C}_\eps^N}
\newcommand{\uepsk}{u_{\eps_k}}
\newcommand{\vepsk}{v_{\eps_k}}
\newcommand{\xkik}{x_{k,i_k}}
\newcommand{\Qkik}{Q_{k,i_k}}
\newcommand{\xoneione}{x_{1,i_1}}
\newcommand{\xdots}{x_1,\dots,x_N}
\newcommand{\Xtimes}{X_1\times\cdots\times X_N}
\newcommand{\rhotimes}{\rho_1\otimes\cdots\otimes\rho_N}
\newcommand{\udots}{u_1,\dots, u_N}
\def\tran#1{ #1 ^{c,\varepsilon}}
\def\tranm#1{( #1 )^{c,\varepsilon}}
\newcommand{\LEnt}{L^{exp,\varepsilon}}
\DeclareMathOperator{\argmax}{argmax}
\DeclareMathOperator{\spt}{spt}
\definecolor{darkred}{rgb}{0.4,0,0}
\author{Camilla Brizzi}
\address{Department of Mathematics, Technical University of Munich \\ Boltzmannstr. 3,
	85748 Garching b. München \\ Germany}
\email{camilla.brizzi@tum.de}
\author{Luigi De Pascale}
\address{Dipartimento di Matematica e Informatica, Universit\`a di Firenze \\ Viale Morgagni 67/a, 50134 Firenze \\ Italy}
\email{luigi.depascale@unifi.it}
\urladdr{http://web.math.unifi.it/users/depascal/}
\author{Anna Kausamo}
\address{Dipartimento di Matematica e Informatica, Universit\`a di Firenze \\ Viale Morgagni 67/a, 50134 Firenze \\ Italy}
\email{akausamo@gmail.com}
\title[The variational limit of entropic regularizations]{A regularized transportation cost stemming from entropic approximation. }
\subjclass[2020]{49J45, 49N15, 49K30}
\begin{document}
	\maketitle
	\begin{abstract}
		We study the entropic regularizations of optimal transport problems under suitable summability assumptions on the point-wise transport cost. These summability assumptions already appear in the literature. However, we show that the weakest compactness conditions that can be derived are already enough to obtain the convergence of the regularized functionals. This approach allows us to characterize the variational limit of the regularization even when it does not converge  to the original problem. The results apply also to problems with more than two marginals. 
	\end{abstract}	
	
	\section{Introduction} 
	Let $X$ and $Y$ be Polish spaces, $\mu \in \Prob (X) $ and $\nu \in \Prob (Y)$ be Borel probability measures, and let $c:X\times Y \to [0, +\infty]$ be a lower semi-continuous transport cost. Consider the functional
	\begin{equation}\label{hmkproblem}
		\peps (\gamma)= \int_{X\times Y} c(x,y) d \gamma(x,y) + \eps \mathcal{H} (\gamma|\mu\otimes \nu),
	\end{equation}
	where $\gamma\in\Pi(\mu,\nu):=\{\gamma \in \Prob(X\times Y) \, : \, \pi^1_\sharp \gamma = \mu \ \text{and} \ \pi^2_\sharp \gamma = \nu\}$ and 
	\begin{align}\label{entropy}
		\notag	&\mathcal{H} (\gamma|\mu\otimes \nu)=\\  &=\begin{cases}
			\int_{X\times Y}\left(\frac{d\gamma}{d(\mu\otimes \nu)}(x,y)\right)\log\left(\frac{d\gamma}{d(\mu\otimes \nu)}(x,y)\right)d\mu\otimes\nu(x,y)-1&\text{ if }\gamma\ll\mu\otimes\nu\\
			+\infty&\text{otherwise}
		\end{cases}
	\end{align}
	is the relative entropy functional of $\gamma$ with respect to $\mu\otimes\nu$. 
	The Entropic Optimal Transport Problem (or entropy-regularized optimal transport problem, and EOT in short) is the variational problem
	\begin{equation}\label{EOT}\tag{EOT}
		\min_{\gamma\in\Pi(\mu,\nu)} \peps(\gamma).
	\end{equation}
	
	Variational problems involving the relative entropy have a long history in mathematics and applications \cite{leonard2014survey,ChaChaLie1984CMP}. The Schr\"odinger problem is the king and paradigma of this kind of problems. 
	
	Here we look at the relative entropy as a regularizing term for the standard optimal transport problem.  The first to use the entropy in this fashion, and to remark that a fast algorithm was possible for this problem, seems to be \cite{Cut2013ANIPS}. The reader will find more on the computational aspects in \cite{CutPey2016,CutuPeyr2019FTML,BenCarCutNenPeySJSC2015}. $\Gamma$-convergence as $\eps\to 0$ of the problem \eqref{EOT} to the standard optimal transport problem 
	\[\min_{\gamma \in \Pi(\mu,\nu)}\int_{X\times Y} c(x,y)d\gamma(x,y),\]
	has been proved in \cite{CarDuvPeySch2017,LeoJFA2012} for the quadratic cost. Results about different type of convergence and for more general continuous costs can be found, for instance, in \cite{Nut2020LecNotes,CarPegTamCalVarPDE2023}. In the last years also an investigation on the accuracy of the convergence has flourished, see for instance  \cite{CarPegTamCalVarPDE2023} and the references therein, and \cite{NenPegCJM2023} for a multi-marginal result. 
	
	The problem \eqref{EOT} has a natural dual counterpart, given by 
	\begin{equation}\label{dual}\tag{D}
		\max_{(u,v)\in L^1_\mu \times L^1_\nu }\deps(u,v),
	\end{equation} 
	where
	\begin{equation}\label{dualfunctional}
		\deps(u,v):= \int_X u(x) d\mu(x) +\int_Y v(y) d\nu(y) -\eps \int e^\frac{u(x)+v(y)-c(x,y)}{\eps} d\mu\otimes \nu(x,y) +\eps,
	\end{equation}
	for $u \in L^1_\mu$ and $v\in L^1_\nu$. Moreover
	$\deps (u,v)>-\infty$ if and only if $(u,v)\in \LEnt_\mu \times \LEnt_\nu$ (see Proposition \ref{domain}) where, given a measure $\lambda$ on a Polish space $Z$, 
	\begin{align*}
		&\LEnt_\lambda :=\{f \in L^1_\lambda \ : \ \int_Z e^\frac{f}{\eps}d\lambda <+\infty\}.
	\end{align*}
	Thus
	\begin{equation}
		\max_{(u,v)\in L^1_\mu \times L^1_\nu }\deps(u,v)=\max_{(u,v)\in \LEnt_\mu \times \LEnt_\nu }\deps(u,v).
	\end{equation}

	The existence of a maximizing pair has been investigated in several papers. For a continuous and bounded cost and also for more general entropy terms see \cite{DiMGer2020JSC,DiMGer2020OptimalTL}. There a maximizing sequence is proved to converge weak* in  $L^\infty$ while a more specific maximizing sequence given by the Sinkhorn algorithm (see \eqref{eq:sequencesinkhorn}) converges strong in $L^p$. A previous existence result in statistic mechanical setting can be found in \cite{ChaChaLie1984CMP}. Convergence in total variation of the Sinkhorn algorithm is studied in \cite{NutWie2023AoP}
	under an integrability assumption on the continuous cost. Convergence of the dual maximizers as $\eps \to 0$ can be found, for example, for continuous costs which satisfy an integrability condition in \cite{Nut2020LecNotes,NutWie2023AoP}.
Moreover, the continuity - or, more precisely, the upper semicontinuity - of the cost function $c$ is a sufficient condition for the maximizers of \eqref{dual} to converge to the Kantorovich potentials of the associated optimal transport problem (see \cite{NutWie2022PTRF}). However, it is important to stress the fact that, as observed in \cite{Nut2020LecNotes,NutWie2022PTRF} (see also Example \ref{ex: counterexample}),  if the cost $c$ is not continuous, the regularized problem might not converge to the unregularized one with the same cost function $c$.
	One of the main result of this paper is an analysis of the limit for a lower semicontinuous $c$ which satisfies some integral bounds (see assumption \eqref{ipotesi1} below). In particular we show that the limit potentials of a sequence of optimal potentials for \eqref{dual} are, indeed, the Kantorovich potentials for an optimal transport problem whose cost function $\tilde c$ is a regularization of  the original cost function $c$, which for every $(x,y)\in X\times Y$ is defined by
	\begin{equation}\label{eq:ctilde}
		\tilde{c}(x,y):= \sup\{ t \, : \, \exists r>0, \, \text{s.t.} \, \mu\otimes\nu (B_r(x,y)\cap \{c>t\})= \mu\otimes\nu (B_r((x,y)) \}.
		\end{equation}
	Even if apparently obscure, this regularization belongs to the same $L^1(\mu\otimes\nu)$ - class of $c$ and replace the lack of upper semicontinuity of $c$ with a simlar property taylored for the measure $\mu\otimes\nu$ (see Remark \ref{rmk:uppersemicont_ctilde}).

In this paper we assume that $\mu$, $\nu$, and $c$ satisfy 
	the following assumption
	\begin{equation}\label{ipotesi1}\tag{$A$} 
		\sup_y \int_X c(x,y) d\mu(x),\ \sup_x \int_Y c(x,y) d\nu(y)\leq \kcal <+\infty.
	\end{equation}
The assumption \eqref{ipotesi1} implies	\begin{equation*}
		\int_{X\times Y} c(x,y) d\mu\otimes\nu(x,y)\leq \ical <+\infty,
	\end{equation*}
	which is a common requirement to have existence of a solution (see for instance \cite{ BorLewNuss1994JFA} and \cite{Nut2020LecNotes, leonard2014survey} for a detailed survey). Moreover, by the strict convexity of $\gamma\mapsto \peps(\gamma)$, such a solution is also unique. 
The reason for condition \eqref{ipotesi1}, which can be found also in \cite{Lel2024ESAIMCOCV}, is that we are interested in cost functions $c$ that can admit $+\infty$ as a value like, for instance, the important case of the Coulomb cost. The optimal transport problem with Coulomb cost, especially in its multi-marginal formulation, has been largely studied in the last years, as it appears when considering a system of electrons in the case where the repulsion between them largely dominates over their kinetic energies (see for instance \cite{ButtDePaGoGi2012PhRA,FriGerGor2022,CotFrieKluCPAM2013}).
In the case of Coulomb cost, in \cite{GerKauRaj2020}  and \cite{Lel2024ESAIMCOCV} has been showed respectively that the primal and dual regularized optimal transport problem converge, as $\eps\to 0$, to the unregularized one with the same cost. In the second paper more is said on the regularity of potentials.

Below we provide some examples of cost functions and marginals that satisfy these assumptions which, then, appear quite reasonable. 
	
	\begin{example}
		Let us give a few examples of $\mu,\nu$ and $c$ satisfying 
		assumption \eqref{ipotesi1} above. 
		\begin{enumerate}
			\item Let $X=Y=\Rd$, $d\ge 3$, $c(x,y)=\frac{1}{|x-y|^{d-2}}$ and $\mu=\nu\ll\Leb^d$ with  $d\mu(x)=d\nu(x)=\rho(x) dx$. 
			Condition \eqref{ipotesi1} is satisfied whenever $\rho\in L^{\frac{d}{2}+\eps}(\Rd)$, for some $\eps>0$. Indeed, if we fix $p=\frac d 2 +\eps=\frac{d}{2-\eta}$, with $\eta=\frac{2\eps}{d+\eps}$, and $q=\left(1-\frac 1p\right)^{-1}=\frac{d}{d-2+\eta}$, by the  H\"{o}lder inequality we get 
			\begin{equation*}
				\left(\int_{\R^d}\left(\frac{1}{|x-y|}\right)^{(d-2)q}dx\right)^{\frac 1q}\left(\int_{\Rd} \rho^p(x)dx\right)^{\frac 1 p}= \left|\left||x|^{-1} \right|\right|_{q(d-2)}^{\frac{1}{q}}  ||\rho||_{p},
			\end{equation*}
			where $q(d-2)<d$ guarantees that $\left|\left||x|^{-1} \right|\right|_{q(d-2)}$ is bounded. Notice that the first integral does not depend on $y$. The Gaussian density $\rho(x)=\frac{1}{\sqrt{2\pi}}e^{-\frac 12 \frac{(x-m)^2}{\sigma^2}}$, $m\in \R, \sigma^2 >0$, for instance, satisfies the required conditions.
By the H\"{older} inequality as before, if $f,g\in L^{r'}(\Rd)$, with $r'=\left(1-\frac 1 r\right)^{-1}$  we obtain \eqref{ipotesi1}.  
		\end{enumerate}
	\end{example}
In this paper we focus on the dual problem \eqref{dual}.
	The reason to consider the functional \eqref{dualfunctional} is natural, since by using standard tools of convex analysis (see Proposition \ref{prop:ineq1Dual} in the appendix) one can see that
	\[\peps(\gamma)\ge \deps(u,v), \quad \text{for any }\gamma\in\Pi(\mu,\nu), u,v\in L^1_\mu\times L^1_\nu. \]
	However the equality 
	\[\min_{\gamma\in\Pi(\mu,\nu)}\peps(\gamma)= \max_{(u,v)\in \LEnt_\mu \times \LEnt_\nu }\deps(u,v)\] is more delicate. 
	One way to show it is to first prove a characterization of the unique minimizer $\gamma_\eps$ of \eqref{EOT} which says that $\frac{d\gamma_\eps}{d \mu\otimes \nu}(x,y) = e^{\frac{u_\eps(x)+v_\eps(y)-c(x,y)}{\eps}}$, for some $(u_\eps,v_\eps)\in L^1_\mu\times L^1_\nu$ and then to show that the pair $(u_\eps,v_\eps)$ is a maximizer for \eqref{dual}.  For the characterization of the unique optimal $\gamma_\eps$ see for instance \cite{BorLewNuss1994JFA,FolGan1997ANNProb}, for a more comprehensive discussion and the equivalence of the two problems we refer to \cite{Nut2020LecNotes}. Here, on the contrary, we provide a direct proof of existence of maximizers for \eqref{dual}. From this result the above mentioned characterization of $\gamma_\eps$ and the consequent equivalence of the two problems follows directly (see Corollary \ref{cor:minimizer}). The proof of the existence of maximizers is a by-product of the proof of the convergence of the Sinkhorn's algorithm (see Theorem \ref{thm:convergenceSinkhorn}) which is based on a compactness argument. This argument exploits the nice properties of the $(c,\eps)$-transforms (see \eqref{eq:trasfentrop}) which are not only uniformly bounded but also locally uniformly continuous (see Proposition \ref{prop:regolaritactransgen}) in a way that does not depend on $\eps$. A natural consequence is the convergence of the maximizers as $\eps\to 0$.  Section \ref{sec: lim eps} is dedicated to show that the limits of such maximizers are the Kantorovich potentials of the optimal transport problem associated with cost defined in \eqref{eq:ctilde}. An \textit{ad hoc} construction of a Vitali covering (see \ref{thm:vitalicovering}), suitable for general Polish spaces and inspired by the one in \cite{KaeRajSuo2012PAMS}, allows to show that $c$ and its regularization coincide $\mu\otimes\nu$-a.e.. Moreover the minimum values and, up to subsequences, the minimizers of the primal problem \eqref{EOT} converge, respectively, to the minimum value and the minimizers of the primal optimal transport problem with $\tilde c$ (see Theorem \ref{thm:convergenceprimal}).
	Finally we remark that, as pointed out in Section \ref{sec:MM}, the same bound on the $(c,\eps)$-transforms generalizes to the multi-marginal setting (see Proposition \ref{betterpotentials}). This allows for an extension of all our results to that case.
	
	\section{uniform bounds on the $(c,\eps)$-transforms and the existence of maximizers}\label{sec:preliminary}
	This section is devoted to the study of basic but interesting properties of the entropic transform of a function (see Proposition \ref{htrasform} below). We will see that those properties imply a nice bound on the entropic transforms of suitable potentials (see Proposition \ref{improve2}). This bound will be crucial in our proofs of the existence of maximizers (see Theorem \ref{existsmax}) for the dual functional and of the convergence of the Sinkhorn's algorithm (see Theorem \ref{thm:convergenceSinkhorn} in Section \ref{sec:regularityandSiknhorn}) as well as in the discussion on convergence  when $\eps$ is sent to $0$ (see Section \ref{sec: lim eps}).
	
	\begin{proposition}\label{htrasform}
		For every $u\in \LEnt_\mu$ and for every $v\in \LEnt_\nu$, $\deps(u,\cdot)$ and $\deps(\cdot, v)$ are uniquely maximized respectively by 
		\begin{equation}\label{eq:trasfentrop}
			\tran u(y):= -\eps \log \int_X e^{\frac{u(x)-c(x,y)}{\eps}}d\mu(x),
		\end{equation}
		and by
		\begin{equation}\label{eq:trasfentropv}
			\tran v(x):= -\eps \log \int_Y e^{\frac{v(y)-c(x,y)}{\eps}}d\nu(y),
		\end{equation} 
		which are called the entropic transforms or $(c,\eps)$-transforms of $u$ and $v$.
		
	\end{proposition}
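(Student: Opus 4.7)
The plan is to reduce the maximization to a pointwise one-variable strictly concave optimization. Fix $u \in \LEnt_\mu$. Applying Fubini's theorem to the exponential integral in \eqref{dualfunctional} and using the definition \eqref{eq:trasfentrop} to collapse the $x$-integration yields
\begin{equation*}
\deps(u,v) = \int_X u \, d\mu + \eps + \int_Y \bigl[v(y) - \eps e^{(v(y) - \tran u(y))/\eps}\bigr] d\nu(y).
\end{equation*}

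For each fixed $y$, the real-valued map $t \mapsto t - \eps e^{(t - \tran u(y))/\eps}$ is strictly concave with unique maximizer $t = \tran u(y)$ (and maximum value $\tran u(y) - \eps$), so integrating the resulting pointwise inequality gives $\deps(u,v) \le \int u\, d\mu + \int \tran u\, d\nu$ with equality iff $v = \tran u$ $\nu$-a.e. A direct computation using the defining identity $e^{-\tran u(y)/\eps} = \int e^{(u(x) - c(x,y))/\eps} d\mu(x)$ shows $\int e^{(u + \tran u - c)/\eps} d\mu \otimes \nu = 1$, so the candidate $v = \tran u$ does attain this upper bound; uniqueness in $\LEnt_\nu$ follows from the strict concavity encoded in the equality case.

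The one subtle step, and the main technical obstacle, is to verify that $\tran u$ itself lies in $\LEnt_\nu$, so that the claim is meaningful. Here assumption \eqref{ipotesi1} is essential: by Jensen's inequality applied to the probability measure $\mu$,
\begin{equation*}
\int_X e^{(u(x) - c(x,y))/\eps} d\mu(x) \ge \exp\left(\frac{\int u\, d\mu - \int c(\cdot,y)\, d\mu}{\eps}\right) \ge \exp\left(\frac{\int u\, d\mu - \kcal}{\eps}\right),
\end{equation*}
which yields the uniform upper bound $\tran u(y) \le \kcal - \int u\, d\mu$. Combined with the trivial lower bound $\tran u(y) \ge -\eps \log \int e^{u/\eps} d\mu$, coming from $c \ge 0$ and $u \in \LEnt_\mu$, this places $\tran u$ in $L^\infty_\nu \subset \LEnt_\nu$. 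The argument for $\tran v$ as the unique maximizer of $\deps(\cdot, v)$ is completely symmetric.
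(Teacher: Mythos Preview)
Your proof is correct and follows essentially the same route as the paper's own argument (Proposition~\ref{prop:aboututransf} in the appendix): apply Fubini to reduce to a pointwise maximization of the strictly concave map $t\mapsto t-\eps b e^{t/\eps}$, identify the maximizer as $-\eps\log b$, and then verify membership of $\tran u$ in the admissible class via Jensen's inequality and assumption~\eqref{ipotesi1}. The only cosmetic difference is that you package the computation in terms of $\tran u$ from the outset and, by also using the trivial lower bound from $c\ge 0$, conclude $\tran u\in L^\infty_\nu$ rather than merely $\tran u\in\LEnt_\nu$; this is slightly stronger but not needed here.
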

	\begin{proof} See Proposition \ref{prop:aboututransf}.
	\end{proof}
	\begin{corollary}	\label{cor:increasingdualfunct}
		For every $u\in \LEnt_\mu$ and $v\in \LEnt_\nu$
		\begin{equation}\label{improve}
			\deps(u,v)\leq \deps (u,\tran{u}) \quad \text{and} \quad \deps(u,v)\leq \deps (\tran{v},v),
		\end{equation}
		and that 
		\begin{equation*}
			\deps(u,v)= \deps(u,\tran{u}) \quad \text{and} \quad \deps(u,v)= \deps (\tran{v},v),
		\end{equation*}
		if and only if $v=\tran{u}$ and $u=\tran{v}$.

	\end{corollary}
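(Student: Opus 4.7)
The plan is to read off the corollary directly from Proposition \ref{htrasform}. Fix $u\in\LEnt_\mu$ and $v\in\LEnt_\nu$. Proposition \ref{htrasform} asserts that the partial functional $w\mapsto\deps(u,w)$ attains its unique maximum at $w=\tran{u}$, and that $z\mapsto\deps(z,v)$ attains its unique maximum at $z=\tran{v}$. Specializing the first statement to $w=v$ gives the inequality $\deps(u,v)\le\deps(u,\tran{u})$, together with the characterization of equality: $\deps(u,v)=\deps(u,\tran{u})$ iff $v=\tran{u}$. Specializing the second statement to $z=u$ gives symmetrically $\deps(u,v)\le\deps(\tran{v},v)$, with equality iff $u=\tran{v}$.

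For the joint "if and only if" at the end of the statement, the forward direction is the conjunction of the two individual characterizations just derived. The reverse direction is immediate: if both $v=\tran{u}$ and $u=\tran{v}$, then the two equalities $\deps(u,v)=\deps(u,\tran{u})$ and $\deps(u,v)=\deps(\tran{v},v)$ are obtained by simply substituting these identities into the respective arguments of $\deps$.

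There is essentially no obstacle in the corollary itself; its content is a direct restatement of the uniqueness of the maximizer in Proposition \ref{htrasform}. The only point that requires care, and which is the genuine content of Proposition \ref{htrasform} (proved via Proposition \ref{prop:aboututransf} in the appendix), is the well-posedness issue that $\tran{u}\in\LEnt_\nu$ and $\tran{v}\in\LEnt_\mu$, so that $\deps(u,\tran{u})$ and $\deps(\tran{v},v)$ are finite and the partial maximization problems are well-defined. Once this is granted, the proof of the corollary is a one-line invocation.
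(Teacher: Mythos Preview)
Your proposal is correct and matches the paper's approach: the paper does not even include a proof for this corollary, treating it as an immediate consequence of Proposition~\ref{htrasform}. Your write-up simply spells out that immediate deduction, including the equality cases via uniqueness of the partial maximizers.
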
  
	
	\begin{lemma}\label{tranupluscost} Let $u \in \LEnt$ and $a\in \R$, then $\tran{(u+a)}=\tran{u}-a$.
	\end{lemma}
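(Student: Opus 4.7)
The plan is to prove this identity by direct computation from the definition \eqref{eq:trasfentrop}, exploiting the fact that an additive constant inside the exponential factors out as a multiplicative constant, which then comes back as an additive constant after the logarithm.

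First I would verify that the operation is well-defined, i.e.\ that $u+a \in \LEnt_\mu$ whenever $u \in \LEnt_\mu$ and $a \in \R$. This is immediate: $u+a \in L^1_\mu$ since $a$ is a constant and $\mu$ is a probability measure, and $\int_X e^{(u+a)/\eps} d\mu = e^{a/\eps} \int_X e^{u/\eps} d\mu < +\infty$. So $\tran{(u+a)}$ makes sense pointwise.

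Then I would write
\begin{align*}
\tran{(u+a)}(y) &= -\eps \log \int_X e^{\frac{u(x)+a - c(x,y)}{\eps}} d\mu(x) \\
&= -\eps \log\left( e^{a/\eps} \int_X e^{\frac{u(x)-c(x,y)}{\eps}} d\mu(x)\right) \\
&= -a - \eps \log \int_X e^{\frac{u(x)-c(x,y)}{\eps}} d\mu(x) \\
&= \tran{u}(y) - a,
\end{align*}
which is the claimed identity, valid for every $y$ where $\tran{u}(y)$ is finite (and trivially where it is $-\infty$ or $+\infty$ by the same computation).

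There is essentially no obstacle: the computation is one line, and the only subtle point is to justify splitting the logarithm, which is legitimate because the integral $\int_X e^{(u-c)/\eps} d\mu$ is strictly positive (the integrand is positive almost everywhere as long as $\mu$ is non-trivial) and $e^{a/\eps}$ is a positive real number. The analogous identity $\tranm{v+a}= \tran{v}-a$ for $v \in \LEnt_\nu$ follows by exactly the same argument applied to \eqref{eq:trasfentropv}.
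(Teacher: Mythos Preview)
Your proof is correct and follows exactly the same direct computation as the paper's own proof, which simply factors $e^{a/\eps}$ out of the integral and applies the logarithm. You have added some extra justification (well-definedness of $u+a \in \LEnt_\mu$, positivity of the integral), but the core argument is identical.
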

	\begin{proof}
		\begin{align*}
			\tran{(u+a)}(y)=-\eps \log e^{\frac{a}{\eps}}\int_X e^{\frac{u(x)-c(x,y)}{\varepsilon}}d\mu(x)=\tran{u}(y)-a.
		\end{align*} 
	\end{proof}
	
	\begin{lemma}\label{simpledual} Let $u\in L_\mu ^{exp,\eps}$ and $v\in L_\nu ^{exp,\eps}$. For any pair of the type $(u,\tran{u})$ or $(\tran{v},v)$ 
		\[\deps (u,\tran{u})= \int_X u d\mu+\int_Y \tran{u} d\nu \quad \text{and} \quad \deps (\tran{v},v)= \int_X \tran{v} d\mu+\int_Y v d\nu.\]
	\end{lemma}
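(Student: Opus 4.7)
The plan is to prove both identities by direct computation: the definition \eqref{eq:trasfentrop} of the entropic transform is tailored precisely so that the exponential term in the dual functional \eqref{dualfunctional} evaluates to $1$ when the second argument is $\tran{u}$. The key reformulation of \eqref{eq:trasfentrop} is
\[
e^{-\tran{u}(y)/\eps} \;=\; \int_X e^{\frac{u(x)-c(x,y)}{\eps}}\, d\mu(x),
\]
and the whole proof essentially consists of exploiting this identity.

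Concretely, I would start from \eqref{dualfunctional} with $v=\tran{u}$ and apply Fubini--Tonelli to the non-negative integrand, then substitute the above identity into the inner integral:
\[
\int_{X\times Y} e^{\frac{u(x)+\tran{u}(y)-c(x,y)}{\eps}}\, d(\mu\otimes\nu)(x,y)
= \int_Y e^{\tran{u}(y)/\eps}\left(\int_X e^{\frac{u(x)-c(x,y)}{\eps}}\, d\mu(x)\right)\, d\nu(y)
= \int_Y 1\, d\nu = 1.
\]
Inserting this into \eqref{dualfunctional} makes the last two terms cancel: $-\eps\cdot 1 + \eps = 0$. What remains is exactly the claimed formula
\[
\deps(u,\tran{u}) = \int_X u\, d\mu + \int_Y \tran{u}\, d\nu.
\]
The statement for $(\tran{v},v)$ follows by the symmetric computation, swapping the roles of $(X,\mu)$ and $(Y,\nu)$ and using \eqref{eq:trasfentropv}.

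Since the computation itself is essentially a one-liner, there is no real obstacle in the argument; the only point requiring a small remark is well-definedness. The hypothesis $u\in\LEnt_\mu$ together with $c\geq 0$ gives
\[
\int_X e^{\frac{u(x)-c(x,y)}{\eps}}\, d\mu(x) \leq \int_X e^{u(x)/\eps}\, d\mu(x) < +\infty
\]
for every $y$, so $\tran{u}(y)$ is well-defined and bounded below pointwise; the $L^1_\nu$-integrability of $\tran{u}$ needed for the conclusion to make sense will be a consequence of the uniform boundedness and regularity properties of the entropic transform established in the next propositions of this section. With this in hand, Fubini applies and the proof is complete.
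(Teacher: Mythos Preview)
Your proof is correct and is precisely the computation the paper has in mind: its own proof is the one-line instruction ``substitute the explicit formula of $\tran{u}$ into $\deps$,'' and your write-up spells out that substitution (Fubini, the inner integral collapsing to $e^{-\tran{u}(y)/\eps}$, hence the exponential term equals $1$). One small correction on the side remark: the $L^1_\nu$-membership of $\tran{u}$ is already available \emph{before} this lemma via Proposition~\ref{htrasform} (proved in Proposition~\ref{prop:aboututransf}), so you need not defer it to later results.
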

	\begin{proof} 
		It is enough to substitute in the definition of $\deps(u,v)$ (see equation \ref{dualfunctional}), the explicit formula of $\tran{u}$ when $v=\tran{u}$ and of  $\tran{v}$ when $u=\tran{v}$.
		
	\end{proof}
	\begin{corollary}\label{lem:positiveintegraltransform} 
		Let $u\in L_\mu ^{exp,\eps}$ and $v\in L_\nu ^{exp,\eps}$ such that $\deps(u,v)=\int u d\mu +\int v d\nu$. Then 
		\[\int \tran{v} d\mu\ge \int u d\mu  \quad \text{and} \quad \int \tran{u} d\nu\ge \int v d\nu.  \]
	\end{corollary}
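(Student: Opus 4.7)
The plan is to chain together Corollary \ref{cor:increasingdualfunct} and Lemma \ref{simpledual}, using the hypothesis $\deps(u,v)=\int u\,d\mu+\int v\,d\nu$ as the bridge that lets the two be compared directly.

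First, I would unpack the hypothesis. Since
\[
\deps(u,v)=\int_X u\,d\mu+\int_Y v\,d\nu-\eps\int e^{\frac{u(x)+v(y)-c(x,y)}{\eps}}d\mu\otimes\nu+\eps,
\]
the assumption $\deps(u,v)=\int u\,d\mu+\int v\,d\nu$ is simply the normalization $\int e^{(u+v-c)/\eps}\,d\mu\otimes\nu=1$. I would record this, since it makes the hypothesis very concrete, although for the proof I only need the equality in the functional form.

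Next, by Corollary \ref{cor:increasingdualfunct}, replacing the second argument of $\deps(u,\cdot)$ by $\tran{u}$ can only increase the dual functional:
\[
\deps(u,v)\le \deps(u,\tran{u}).
\]
By Lemma \ref{simpledual}, the right-hand side equals $\int u\,d\mu+\int \tran{u}\,d\nu$. Combining this with the hypothesis gives
\[
\int u\,d\mu+\int v\,d\nu\le \int u\,d\mu+\int \tran{u}\,d\nu,
\]
and cancelling $\int u\,d\mu$ (legitimate since $u\in L^1_\mu$) yields $\int \tran{u}\,d\nu\ge \int v\,d\nu$.

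The second inequality is entirely symmetric: apply Corollary \ref{cor:increasingdualfunct} in the first argument, $\deps(u,v)\le \deps(\tran{v},v)$, use Lemma \ref{simpledual} again to rewrite the right-hand side as $\int \tran{v}\,d\mu+\int v\,d\nu$, and cancel $\int v\,d\nu$ to obtain $\int \tran{v}\,d\mu\ge \int u\,d\mu$. There is no real obstacle here — the statement is essentially a two-line consequence of the two preceding results, and the only thing to be mindful of is that all the integrals are finite so that the cancellations are justified, which is guaranteed by $u\in\LEnt_\mu$, $v\in\LEnt_\nu$ together with Proposition \ref{htrasform}.
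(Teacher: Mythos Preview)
Your proof is correct and follows essentially the same approach as the paper: chain the inequality $\deps(u,v)\le\deps(u,\tran{u})$ from Corollary~\ref{cor:increasingdualfunct} with the identity $\deps(u,\tran{u})=\int u\,d\mu+\int\tran{u}\,d\nu$ from Lemma~\ref{simpledual}, then cancel $\int u\,d\mu$ using the hypothesis. The paper's proof is just the one-line version of this, and it too only writes out one of the two symmetric inequalities.
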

	\begin{proof}
		By Lemma \ref{simpledual} and equation \eqref{improve} we have
		\[\int_X u d\mu+\int_Y \tran{u} d\nu=\deps (u,\tran{u})\ge \deps (u,v)= \int_X u d\mu+\int_Y v d\nu. \]
	\end{proof}
	\begin{lemma}\label{boundabove} Let $u\in \LEnt_\mu$. If $\int u d\mu \ge 0$, then $\tran{u} \leq \kcal$, where $\mathcal{K}$ is the same as in \eqref{ipotesi1}.
	\end{lemma}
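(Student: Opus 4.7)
The plan is to obtain the bound by applying Jensen's inequality to the integral inside the logarithm defining $\tran{u}$, and then exploit both the hypothesis $\int u\, d\mu \ge 0$ and the uniform row-bound \eqref{ipotesi1} on $c$.

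Concretely, starting from
\[
\tran{u}(y) = -\eps \log \int_X e^{\frac{u(x)-c(x,y)}{\eps}} d\mu(x),
\]
I would apply Jensen's inequality to the convex function $t \mapsto e^{t}$ with respect to the probability measure $\mu$, yielding
\[
\int_X e^{\frac{u(x)-c(x,y)}{\eps}} d\mu(x) \ge \exp\left( \frac{1}{\eps}\int_X \bigl(u(x)-c(x,y)\bigr)\, d\mu(x)\right).
\]
Taking $-\eps\log$ on both sides (which reverses the inequality) gives
\[
\tran{u}(y) \le \int_X c(x,y)\, d\mu(x) - \int_X u(x)\, d\mu(x).
\]

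At this point the estimate follows immediately: by the hypothesis $\int_X u\, d\mu \ge 0$ we can drop (with the correct sign) the second term, and then the assumption \eqref{ipotesi1} gives $\int_X c(x,y)\, d\mu(x) \le \kcal$ uniformly in $y$, so
\[
\tran{u}(y) \le \kcal - \int_X u\, d\mu \le \kcal.
\]

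There is no real obstacle here: the only subtle point is justifying the use of Jensen's inequality, which requires the integrand $(u - c(\cdot,y))/\eps$ to be in $L^1(\mu)$. This is guaranteed because $u \in \LEnt_\mu \subset L^1_\mu$ by definition, and $c(\cdot,y) \in L^1_\mu$ for every $y$ thanks to \eqref{ipotesi1} (the inner integral is finite uniformly in $y$). The inequality applied to $\exp$ is valid also when the right-hand side equals $0$, so no further integrability on the exponential is needed beyond what the class $\LEnt_\mu$ already provides.
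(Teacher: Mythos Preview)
Your proof is correct and follows essentially the same approach as the paper: apply Jensen's inequality to the exponential inside the definition of $\tran{u}$, then use the hypothesis $\int u\,d\mu\ge 0$ together with assumption~\eqref{ipotesi1}. Your additional remark on the $L^1$-integrability needed to justify Jensen is a welcome clarification that the paper leaves implicit.
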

	\begin{proof} For every $y\in Y$ we have by Jensen's equality
		\begin{align*}
			\tran{u}(y)=-\eps \log \int_X e^{\frac{u(x)-c(x,y)}{\varepsilon}}d\mu(x)  \le -\int_X u(x)d\mu(x)+\int_Xc(x,y)d\mu(x)\le \kcal.
		\end{align*}
	\end{proof}

	\begin{lemma}\label{cbelow}  Let $u\in \LEnt_\mu$. If $u \leq A$, for some $A\in \R$, then $\tran{u} \geq -A$.
	\end{lemma}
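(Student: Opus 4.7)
The statement is a straightforward monotonicity observation about the $(c,\eps)$-transform, so the plan is essentially a one-line estimate inside the logarithm. The key ingredients are: (i) the explicit formula \eqref{eq:trasfentrop} defining $\tran{u}$, (ii) the nonnegativity of the cost $c\ge 0$, and (iii) the fact that $\mu$ is a probability measure, so constants integrate to themselves.

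The plan is the following. Starting from the hypothesis $u(x)\le A$ for $\mu$-a.e.\ $x$, and combining it with $c(x,y)\ge 0$, I would point out that for every $y\in Y$
\[
\frac{u(x)-c(x,y)}{\eps}\le \frac{A}{\eps}.
\]
Exponentiating (the exponential is monotone) and integrating against the probability measure $\mu$ gives
\[
\int_X e^{\frac{u(x)-c(x,y)}{\eps}}\,d\mu(x)\le e^{A/\eps}\int_X d\mu(x)=e^{A/\eps}.
\]
Taking logarithms and multiplying by $-\eps$ (which reverses the inequality) yields exactly
\[
\tran{u}(y)=-\eps\log\int_X e^{\frac{u(x)-c(x,y)}{\eps}}\,d\mu(x)\ge -A,
\]
as required.

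There is no real obstacle here: the only subtlety is keeping track of sign flips when multiplying by $-\eps$, and noticing that the assumption \eqref{ipotesi1} is not needed at all for this direction — only $c\ge 0$ and the probability normalization of $\mu$ enter. The bound is sharp in the sense that equality holds when $u\equiv A$ and $c\equiv 0$, which is consistent with the complementary upper bound already established in Lemma \ref{boundabove}.
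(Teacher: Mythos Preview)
Your proof is correct and is essentially identical to the paper's own argument: both use $u(x)-c(x,y)\le A$ (from $u\le A$ and $c\ge 0$), bound the integral by $e^{A/\eps}$ using that $\mu$ is a probability measure, and then apply $-\eps\log$ to flip the inequality. There is nothing to add.
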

	
	\begin{proof}
		Since $c(x,y)\ge 0$ we get
		\[\tran{u} (y)=-\eps \log \int_X e^{\frac{u(x)-c(x,y)}{\varepsilon}}d\mu(x)\ \ge -\eps \log \int_X e^{\frac{A}{\varepsilon}}d\mu(x) =-A.\]

	\end{proof}

	\begin{proposition}\label{improve2} If $(u,v)$ is such that $\deps(u,v)\geq 0$, then there exist $(\tilde{u},\tilde{v})$ satisfying 
		\begin{enumerate}
			\item $\deps(\tilde{u},\tilde{v}) \geq \deps(u,v) $,
			\item $-\kcal \leq \tilde{u}, \tilde{v} \leq \kcal$,
			\item \label{prop3} $\int \tilde{u} d\mu \geq 0,\ \int \tilde{v} d\nu \geq 0$. 
		\end{enumerate}
		
	\end{proposition}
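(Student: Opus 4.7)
The plan is to construct $(\tilde u, \tilde v)$ from $(u,v)$ through a short chain of alternating $(c,\eps)$-transforms, interleaved with a single constant shift. Each transform improves the dual value by Corollary \ref{cor:increasingdualfunct}, and each is chosen so that exactly the hypothesis needed for the next invocation of Lemma \ref{boundabove}, Lemma \ref{cbelow}, or Corollary \ref{lem:positiveintegraltransform} has just been established.

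First I would set $v_1 := \tran u$. Then Corollary \ref{cor:increasingdualfunct} and Lemma \ref{simpledual} give
\[
\deps(u,v_1) \geq \deps(u,v) \geq 0, \qquad \deps(u,v_1) = \int_X u\,d\mu + \int_Y v_1\,d\nu,
\]
so $\int u\,d\mu + \int v_1\,d\nu \geq 0$. Putting $a := \int_Y v_1\,d\nu$ and defining $u' := u+a$, $v'_1 := v_1 - a$, a direct computation shows $\deps(u',v'_1) = \deps(u,v_1)$, while the shift produces the normalizations $\int u'\,d\mu \geq 0$ and $\int v'_1\,d\nu = 0$. Crucially, Lemma \ref{tranupluscost} preserves the identity $v'_1 = \tran{u'}$, and therefore Lemma \ref{boundabove} applied to $u'$ yields $v'_1 \leq \kcal$.

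Next I would set $\tilde u := \tran{v'_1}$. Corollary \ref{cor:increasingdualfunct} gives $\deps(\tilde u, v'_1) \geq \deps(u', v'_1) \geq 0$; the pair $(u', v'_1)$ satisfies $\deps(u',v'_1) = \int u'\,d\mu + \int v'_1\,d\nu$ by Lemma \ref{simpledual}, so Corollary \ref{lem:positiveintegraltransform} delivers $\int \tilde u\,d\mu \geq \int u'\,d\mu \geq 0$. The two pointwise bounds on $\tilde u$ then come for free: Lemma \ref{cbelow} applied to $v'_1 \leq \kcal$ gives $\tilde u \geq -\kcal$, and the symmetric version of Lemma \ref{boundabove} applied to $v'_1$ (which is legal because $\int v'_1\,d\nu = 0 \geq 0$, exactly the normalization arranged by the shift) gives $\tilde u \leq \kcal$. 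Finally, I would set $\tilde v := \tran{\tilde u}$: Corollary \ref{cor:increasingdualfunct} gives $\deps(\tilde u, \tilde v) \geq \deps(\tilde u, v'_1) \geq \deps(u,v)$, proving (1); Corollary \ref{lem:positiveintegraltransform} applied to the pair $(\tilde u, v'_1)$ (again of the form covered by Lemma \ref{simpledual}) gives $\int \tilde v\,d\nu \geq \int v'_1\,d\nu = 0$; and Lemma \ref{boundabove} (using $\int \tilde u\,d\mu \geq 0$) together with Lemma \ref{cbelow} (using $\tilde u \leq \kcal$) provide $-\kcal \leq \tilde v \leq \kcal$.

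The main obstacle is making sure that each transform step does not destroy a bound or integral condition established earlier; this is why the constant shift must be inserted between the first and second transform rather than at the end. Without it, one could not invoke the symmetric version of Lemma \ref{boundabove} on $v'_1$ to obtain the upper bound $\tilde u \leq \kcal$ in the second step, and the three conditions could not be achieved simultaneously. Once the ordering is fixed, the verification reduces to bookkeeping between Lemmas \ref{tranupluscost}--\ref{cbelow} and Corollary \ref{lem:positiveintegraltransform}.
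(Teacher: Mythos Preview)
Your proof is correct and follows essentially the same route as the paper: first take a $(c,\eps)$-transform, then perform a constant shift to make both integrals nonnegative, then apply two further transforms and read off the bounds from Lemmas \ref{boundabove}--\ref{cbelow} and Corollary \ref{lem:positiveintegraltransform}. The only cosmetic difference is the choice of shift constant: the paper uses $a_0 = \tfrac{1}{2}\deps(u,\tran u) - \int u\,d\mu$, which splits the dual value symmetrically between the two integrals, whereas you use $a = \int \tran u\,d\nu$, which sets $\int v'_1\,d\nu = 0$ exactly; both choices yield the two nonnegativity conditions needed downstream, and the rest of the argument is identical.
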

	\begin{proof} Starting from the pair $(u,v)$ consider
		\[a_0=\frac{\deps(u,\tran{u})}{2}-\int u d\mu\] 
		Notice that by equation \eqref{improve} $\deps(u,\tran{u}) \geq \deps(u,v)\geq 0$.  
		We first define 
		$(\overline u, \overline v):=(u+a_0, \tran{u}-a_0)$. Observe that $(\overline u, \overline v)=(\overline u, \tran{\overline u})$ by Lemma \ref{tranupluscost} and that, since $\deps$ is invariant under $(u,v)\mapsto (u+a,v-a)$,
		\[\deps(\overline{u},\overline v)=\deps (u,\tran{u})\ge \deps (u,v)\ge 0,\] 
		where the first inequality comes by \eqref{improve}.
		Moreover we have
		\[\int \overline u d\mu = \int u d\mu +  \frac{\deps(u,\tran{u})}{2} -\int u d\mu= \frac{\deps(u,\tran{u})}{2}\geq 0.\] 
		By Lemma \ref{boundabove} it follows that $\overline{v}\leq \kcal$ and by Lemma \ref{simpledual} that 
		\[\int \overline v d\nu = \int \tran{u} d\nu -  \frac{\deps(u,\tran{u})}{2} +\int u d\mu=
		\frac{\deps(u,\tran{u})}{2} \geq 0.
		\]
		If we define $\tilde{u}:=\tran{\overline v}$ and $\tilde{v}:=\tran{\tilde{u}}$, then  by Lemma \ref{cbelow}, $\tilde{u}\geq -\kcal$.
		and by Lemma \ref{boundabove}, $\tilde u \le \mathcal {K}$ Again by Lemma \ref{cbelow}, $\tilde u \le \mathcal {K}$.
		Moreover, by applying twice Corollary \ref{lem:positiveintegraltransform}, we see that $(\tilde u, \tilde v)$ satisfies Property \ref{prop3}. Thus, by Lemma \ref{boundabove}, $\tilde v \le \mathcal K$. Finally, again by \eqref{improve}, $\deps(\tilde u, \tilde v)\ge \deps(\overline u, \overline v)\ge 0$.

	\end{proof}
	
	\begin{corollary}\label{preparazione} There exists $(u_0,v_0)$ such that 
		\begin{enumerate}
			\item$\deps(u_0,v_0) \geq 0$,
			\item $u_0,v_0 \leq \kcal$,
			\item $\int u_0 d\mu \geq 0,\ \int v_0 d\nu \geq 0$,
			\item $v_0=\tran{u_0}$.
		\end{enumerate}
		
	\end{corollary}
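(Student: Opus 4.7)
The plan is to apply Proposition \ref{improve2} to a carefully chosen initial pair $(u,v)$ for which $\deps(u,v)\ge 0$. The cleanest candidate is the trivial pair $(0,0)$. First I would check that it meets the hypothesis: since $c\ge 0$ we have $e^{-c(x,y)/\eps}\le 1$ pointwise on $X\times Y$, hence
\[
\deps(0,0) = -\eps \int_{X\times Y} e^{-c(x,y)/\eps}\, d(\mu\otimes\nu) + \eps \ge -\eps + \eps = 0,
\]
so $(0,0)$ is a legitimate starting point (and in particular it is trivially in $\LEnt_\mu\times \LEnt_\nu$).

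Next I would feed $(u,v)=(0,0)$ into Proposition \ref{improve2} to obtain a pair $(\tilde u,\tilde v)$, and set $(u_0,v_0):=(\tilde u,\tilde v)$. The three conclusions of that proposition immediately give the first three properties required by the corollary: indeed $\deps(u_0,v_0)\ge \deps(0,0)\ge 0$; one has $-\kcal\le u_0,v_0\le \kcal$, which in particular yields the upper bound $u_0,v_0\le \kcal$; and $\int u_0\,d\mu\ge 0$, $\int v_0\,d\nu\ge 0$.

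Finally I would verify property (4), namely $v_0=\tran{u_0}$. Although this identity is not listed among the conclusions of Proposition \ref{improve2}, it is visible from its proof: there the output is defined by $\tilde u:=\tran{\overline v}$ and $\tilde v:=\tran{\tilde u}$, so $\tilde v=\tran{\tilde u}$ holds by construction. I do not foresee a real obstacle here; the whole argument is essentially a direct invocation of the preceding proposition, with the only genuine point being the check that the trivial pair $(0,0)$ already furnishes nonnegative dual value.
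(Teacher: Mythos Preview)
Your proposal is correct and follows essentially the same approach as the paper: apply Proposition \ref{improve2} to the pair $(0,0)$, noting that $\deps(0,0)\ge 0$ since $c\ge 0$, and observe that property (4) comes from the explicit construction $\tilde v:=\tran{\tilde u}$ inside that proposition's proof. Your write-up is in fact more detailed than the paper's, which simply says to apply the proposition to $u\equiv v\equiv 0$ and take $(u_0,v_0)=(\tilde u,\tilde v)$.
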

	\begin{proof}
		Apply the proposition above starting with $u\equiv 0$ and $v \equiv 0$. Then $u_0=\tilde u$ and $v_0=\tilde v$.
	\end{proof}
	\begin{theorem}\label{existsmax}For every $\eps>0$ there exists a unique\footnote{Up to summing and substracting the same constant, i.e. $(u_\eps, v_\eps)$ is a maximizer iff $(u_\eps+a, v_\eps-a)$ is a maximizer for some $a\in\R$. } pair  $(u_\eps,v_\eps)$ maximizing 
		$\deps$. Moreover $-\mathcal{K}\le u_\eps,v_\eps\le \mathcal{K}$, where $\mathcal{K}$ is given by \eqref{ipotesi1}, and $u_\eps=\tran{v_\eps}$, $v_\eps=\tran{u_\eps}$.
	\end{theorem}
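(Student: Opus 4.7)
The plan is a direct-method argument for the concave functional $\deps$, with Proposition \ref{improve2} providing the essential \emph{a priori} bound. I would first note that $M:=\sup \deps\ge \deps(u_0,v_0)\ge 0$, where $(u_0,v_0)$ is the pair supplied by Corollary \ref{preparazione}. Given any maximizing sequence $(u_n,v_n)$, we may discard early terms so that $\deps(u_n,v_n)\ge 0$, and Proposition \ref{improve2} then produces $(\tilde u_n,\tilde v_n)$ with $-\kcal\le \tilde u_n,\tilde v_n\le \kcal$ and $\deps(\tilde u_n,\tilde v_n)\ge \deps(u_n,v_n)\to M$. In particular $M\le 2\kcal+\eps<+\infty$.

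The core step is compactness plus passage to the limit. Since $(\tilde u_n)$ and $(\tilde v_n)$ are bounded in $L^1$, Koml\'os' theorem yields a subsequence and $u_\eps\in L^1_\mu$, $v_\eps\in L^1_\nu$ with $|u_\eps|,|v_\eps|\le \kcal$ such that the Ces\`aro averages $\bar u_N$ of $(\tilde u_n)$ and $\bar v_N$ of $(\tilde v_n)$ converge pointwise a.e.\ to $u_\eps$ and $v_\eps$ respectively. By joint concavity of $\deps$,
\[
\deps(\bar u_N,\bar v_N)\ge \frac{1}{N}\sum_{n=1}^N \deps(\tilde u_n,\tilde v_n)\longrightarrow M.
\]
All integrands defining $\deps(\bar u_N,\bar v_N)$ are dominated (the linear parts by $\kcal$, the exponential one by $e^{2\kcal/\eps}$), so dominated convergence gives $\deps(\bar u_N,\bar v_N)\to \deps(u_\eps,v_\eps)$. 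Hence $\deps(u_\eps,v_\eps)\ge M$ and $(u_\eps,v_\eps)$ is a maximizer satisfying the required bounds.

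The fixed-point identities $v_\eps=\tran{u_\eps}$ and $u_\eps=\tran{v_\eps}$ then follow immediately from the equality case in Corollary \ref{cor:increasingdualfunct}: any failure would allow a strict increase of $\deps(u_\eps,v_\eps)$, contradicting maximality. For uniqueness up to the invariance $(u,v)\mapsto (u+a,v-a)$, I would use strict convexity of the exponential: if $(u^1,v^1)$ and $(u^2,v^2)$ are both maximizers, concavity of $\deps$ makes their midpoint a maximizer too, forcing equality in Jensen's inequality for the exponential integral and hence $u^1(x)+v^1(y)=u^2(x)+v^2(y)$ for $\mu\otimes \nu$-a.e.\ $(x,y)$; a standard Fubini argument shows that $u^1-u^2$ is then constant $\mu$-a.e. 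The main delicacy is the passage to the limit in the exponential term, which is precisely why the uniform $L^\infty$ bound from Proposition \ref{improve2} is indispensable: without it the integrand could be unbounded and the upper semicontinuity of $\deps$ needed to close the argument would fail.
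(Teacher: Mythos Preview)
Your proof is correct and follows a genuinely different path from the paper's. Both arguments start identically: take a maximizing sequence, use Proposition \ref{improve2} to replace it by one uniformly bounded by $\kcal$ in $L^\infty$. From there the paper invokes Banach--Alaoglu in $L^2_\mu\times L^2_\nu$ to extract a weak limit and then appeals to the abstract semicontinuity result Theorem \ref{semicontinuity} (convex, lower-bounded integrands give weakly lower semicontinuous functionals) to conclude that $\deps$ is weakly upper semicontinuous, hence the weak limit is a maximizer. You instead use Koml\'os' theorem to obtain pointwise a.e.\ convergence of Ces\`aro means, then exploit joint concavity of $\deps$ to keep the averaged values near the supremum, and finally pass to the limit by dominated convergence, the domination coming precisely from the $L^\infty$ bound. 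Your route is more hands-on and avoids the black-box semicontinuity theorem, at the price of importing Koml\'os; given that the sequence is actually bounded in $L^\infty$, Koml\'os is stronger than needed (Mazur's lemma after weak compactness would already yield strongly convergent convex combinations), but the argument is clean as written. Your treatment of uniqueness up to the additive invariance is also more explicit than the paper's one-line appeal to strict concavity, and the fixed-point identities are derived the same way in both.
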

	\begin{proof} Let $(u_n,v_n)_n$ be a maximizing sequence. Without loss of generality \footnote{indeed $\deps (0,0)\ge 0$, so the supremum is nonnegative.} we may assume that $\deps(u_n,v_n)\ge 0$ for every $n$. By Proposition \ref{improve2} (i.e. passing to $(\tilde{u}_n, \tilde{v}_n)$) we may assume that $-\kcal \leq u_n, v_n\leq \kcal$.
		By Banach-Alaoglu theorem, up to subsequences
		\[(u_n,v_n)\rightharpoonup (u,v)\] 
		in $L^2_\mu \times L^2_\nu$.
		Since $((x,y), (\xi_1,\xi_2)) \mapsto \eps e^\frac{-c(x,y)}{\eps} e^\frac{\xi_1+\xi_2}{\eps}- \xi_1 -\xi_2$ is convex and lower semi-continuous with respect to $(\xi_1, \xi_2)$ for every fixed $(x,y)$ and  is bounded from below by $c(x,y) +\eps (1-\log \eps)$, which is in $L^1_{\mu \otimes \nu}$, then by Theorem \ref{semicontinuity} $\deps$ is weakly upper semi-continuous in $L^2_\mu \times L^2_\nu$ and thus $(u_\eps,v_\eps)=(u,v)$ is a maximizer. The uniqueness follows from the strict concavity. The fact that $u_\eps=\tran{v_\eps}$ and $v_\eps=\tran{u_\eps}$ follows directly by Lemma \ref{htrasform}.
	\end{proof}
	A direct consequence of the above result is the following
	\begin{corollary}\label{cor:minimizer}
		Let $(u_\eps,v_\eps)$ the pair of maximizers given by Theorem \ref{existsmax}, then the coupling $\bar\gamma_\eps\in\Pi(\mu,\nu)$, given by \begin{equation}\label{eq: minimizer}
			d\bar\gamma_\eps:=e^{\frac{u_\eps\oplus v_\eps - c}{\eps} }d\mu\otimes\nu
		\end{equation} is the unique minimizer of the problem \eqref{EOT}.
		Moreover 
		\begin{equation*}
			\min_{\Pi(\mu,\nu)} \peps (\gamma)=\max_{L^1_\mu\times L^1_\nu}\deps(u,v) 
		\end{equation*}
	\end{corollary}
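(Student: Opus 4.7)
The plan is to show that $\bar\gamma_\eps$ is an admissible competitor in $\Pi(\mu,\nu)$ and that it saturates the primal-dual inequality $\peps(\gamma)\ge \deps(u,v)$ from Proposition \ref{prop:ineq1Dual}, which then forces both statements of the corollary.

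First I would verify that $\bar\gamma_\eps\in\Pi(\mu,\nu)$. The key identity is that $v_\eps=\tran{u_\eps}$, which by the definition \eqref{eq:trasfentrop} means $e^{-v_\eps(y)/\eps}=\int_X e^{(u_\eps(x)-c(x,y))/\eps}d\mu(x)$, and therefore
\[
\int_X e^{(u_\eps(x)+v_\eps(y)-c(x,y))/\eps}d\mu(x)=1 \quad \text{for } \nu\text{-a.e. } y.
\]
By Fubini, integrating $d\bar\gamma_\eps$ in $x$ yields $d\nu$, so $\pi^2_\sharp \bar\gamma_\eps=\nu$. The symmetric argument applied to $u_\eps=\tran{v_\eps}$ gives $\pi^1_\sharp \bar\gamma_\eps=\mu$, so $\bar\gamma_\eps\in\Pi(\mu,\nu)$ (and in particular is a probability measure).

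Next I would compute $\peps(\bar\gamma_\eps)$ directly. Since $\log\bigl(d\bar\gamma_\eps/d(\mu\otimes\nu)\bigr)=(u_\eps+v_\eps-c)/\eps$, the entropy term $\eps\mathcal{H}(\bar\gamma_\eps|\mu\otimes\nu)$ reduces to $\int(u_\eps+v_\eps-c)\,d\bar\gamma_\eps$ up to a constant. The $c$-contributions cancel against $\int c\,d\bar\gamma_\eps$, and using $\int d\bar\gamma_\eps=1$ together with the marginals computed above, one obtains
\[
\peps(\bar\gamma_\eps)=\int_X u_\eps\,d\mu+\int_Y v_\eps\,d\nu.
\]
A parallel computation of $\deps(u_\eps,v_\eps)$ using $\int e^{(u_\eps\oplus v_\eps-c)/\eps}d(\mu\otimes\nu)=1$ yields the same quantity, so $\peps(\bar\gamma_\eps)=\deps(u_\eps,v_\eps)$.

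Finally, combining this equality with the weak duality inequality $\peps(\gamma)\ge \deps(u,v)$ valid for every $\gamma\in\Pi(\mu,\nu)$ and every $(u,v)\in L^1_\mu\times L^1_\nu$, we conclude both that $\bar\gamma_\eps$ is a minimizer of \eqref{EOT} and that $\min\peps=\max\deps$. Uniqueness is immediate from the strict convexity of $\gamma\mapsto\peps(\gamma)$ on $\Pi(\mu,\nu)$, inherited from the strict convexity of the relative entropy on the (convex) admissible set. The main obstacle here is purely the bookkeeping in matching the two constants obtained from the definitions \eqref{entropy} and \eqref{dualfunctional}; conceptually nothing beyond Proposition \ref{htrasform} and Proposition \ref{prop:ineq1Dual} is required.
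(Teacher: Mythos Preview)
Your proposal is correct and follows exactly the approach indicated in the paper: the paper's proof consists of the single sentence ``Thanks to Proposition \ref{prop:ineq1Dual}, it is enough to show that $\bar\gamma_\eps\in \Pi(\mu,\nu)$ and check that $\peps(\bar\gamma_\eps)=\deps(u_\eps,v_\eps)$,'' and you carry out precisely these two verifications using the Schr\"odinger system $u_\eps=\tran{v_\eps}$, $v_\eps=\tran{u_\eps}$ from Theorem \ref{existsmax}. Your remark that the only delicate point is matching the additive constants coming from the conventions in \eqref{entropy} and \eqref{dualfunctional} is apt.
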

	\begin{proof}
		Thanks to Proposition \ref{prop:ineq1Dual}, it is enough to show that $\bar\gamma_\eps\in \Pi(\mu,\nu)$ and check that $\peps(\bar\gamma_\eps)=\deps(u_\eps,v_\eps)$. 
	\end{proof}
	\section{Uniform continuity of  the $(c,\eps)$-transform and convergence of the Sinkhorn's algorithm} \label{sec:regularityandSiknhorn}
	\subsection{Regularity of the $(c,\eps)$-transform}
	In this section we report and show some results on the regularity of the $c,\eps$-transform. We start with a well-known fact stated in Proposition \ref{prop:reularity1} and due to S. Di Marino and A. Gerolin (see \cite{DiMGer2020JSC}). Even though it is not directly used for the results later on, it is interesting in itself and we rewrite it here for completeness. The key result for proving the convergence of Sinkhorn's algorithm is given by Proposition \ref{prop:regolaritactransgen}, which is the local uniform continuity of the $(c,\eps)$-transform. The same result can be found in  \cite{DiMGer2020JSC} as well, but proved under the stronger assumption of a bounded cost function. 
	Finally, Proposition \ref{prop:lipschitz} shows a stronger regularity in the special case of Coulomb cost. 	
	
	\begin{proposition}\label{prop:reularity1}
		If $c:X\times Y \to \R$ is uniformly continuous then for any $u\in L_\eps^{exp}$, $\tran{u}$ is a uniformly continuous function with the same modulus of continuity of $c$. In particular, if $c$ is Lipschitz, $u^{c,\eps}$ has the same Lipschitz constant of $c$.  
	\end{proposition}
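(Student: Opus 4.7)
The plan is to translate the uniform continuity of $c$ in the $y$-variable directly into a pointwise inequality between integrands, then propagate it through the integral and the $-\eps\log$ operation that defines $\tran{u}$.

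Let $\omega$ denote a modulus of continuity of $c$, so that $|c(x,y_1)-c(x,y_2)| \leq \omega(d(y_1,y_2))$ for every $x\in X$ and $y_1,y_2\in Y$. The first step is to rewrite this as the two-sided bound
\[
c(x,y_1) - \omega(d(y_1,y_2)) \leq c(x,y_2) \leq c(x,y_1) + \omega(d(y_1,y_2)),
\]
and then exponentiate (with the opposite sign) to obtain
\[
e^{-\omega(d(y_1,y_2))/\eps}\,e^{(u(x)-c(x,y_1))/\eps} \leq e^{(u(x)-c(x,y_2))/\eps} \leq e^{\omega(d(y_1,y_2))/\eps}\,e^{(u(x)-c(x,y_1))/\eps}.
\]
This bound holds pointwise in $x$ and, crucially, the multiplicative constant does not depend on $x$, which is the whole point of using a modulus of continuity uniform in $x$.

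The second step is to integrate against $\mu$ and apply $-\eps\log$ to both sides. Since both quantities being compared are finite (as $u\in\LEnt_\mu$ and $e^{-c/\eps}\leq 1$), pulling the multiplicative constants out of the integral and using that $-\eps\log$ is decreasing and turns products into sums yields
\[
\tran{u}(y_1) - \omega(d(y_1,y_2)) \leq \tran{u}(y_2) \leq \tran{u}(y_1) + \omega(d(y_1,y_2)),
\]
i.e.\ $|\tran{u}(y_1)-\tran{u}(y_2)|\leq \omega(d(y_1,y_2))$. This is exactly the statement that $\tran{u}$ has the same modulus of continuity as $c$. Specializing to $\omega(t)=Lt$ immediately gives the Lipschitz case.

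I do not expect any real obstacle here: the proposition is essentially a log-sum-exp contractivity statement, and the only thing one has to notice is that the constant coming from the continuity of $c$ factors out of the integral because it is uniform in $x$. The $\eps$'s cancel precisely so that the modulus of $\tran{u}$ inherits from $c$ without any $\eps$-dependent penalty, which is what makes the bound useful later in the paper when one wants regularity estimates that survive the limit $\eps\to 0$.
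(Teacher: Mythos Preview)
Your proof is correct and is essentially the same as the paper's: both exploit that the $x$-independent factor $e^{\pm\omega(d(y_1,y_2))/\eps}$ pulls out of the integral, so that after applying $-\eps\log$ the $\eps$'s cancel and one is left with $|\tran{u}(y_1)-\tran{u}(y_2)|\le\omega(d(y_1,y_2))$. The only cosmetic difference is that the paper assumes without loss of generality $\tran{u}(y_1)\ge\tran{u}(y_2)$ and argues one inequality, whereas you carry the two-sided bound throughout.
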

	
	\begin{proof}
		Let $u\in L_{\eps}^{exp}$ and $y_1,y_2\in Y$. We may assume without loss of generality that $\tran{u}(y_1)\ge \tran{u}(y_2)$. Then 
		\begin{align*}
			&|\tran{u}(y_1)-\tran{u}(y_2)|=\\&=\eps \log\left(\int_{X}e^{\frac{u(x)-c(x,y_2)}{\varepsilon}}d\mu(x)\right)-\eps\log\left(\int_{X}e^{\frac{u(x)-c(x,y_1)}{\varepsilon}}d\mu(x)\right) \\
			&=\eps \log\left(\int_{X}e^{\frac{u(x)-c(x,y_1)+c(x,y_1)-c(x,y_2)}{\varepsilon}}d\mu(x)\right)-\eps\log\left(\int_{X}e^{\frac{u(x)-c(x,y_1)}{\varepsilon}}d\mu(x)\right) \\
			&\le \eps \log\left(e^{\frac{\omega(d(y_1,y_2))}{\eps}}\int_{X}e^{\frac{u(x)-c(x,y_1)}{\varepsilon}}d\mu(x)\right)-\eps\log\left(\int_{X}e^{\frac{u(x)-c(x,y_1)}{\varepsilon}}d\mu(x)\right) \\
			&= \omega (d(y_1,y_2)),
		\end{align*}
		where $\omega$ is the modulus of continuity of $c$.
	\end{proof}
	\begin{proposition}\label{prop:lipschitz}
		Let $X=Y$ be a Polish space, $\alpha>0$ and  $c(x,y):=\frac{1}{d(x,y)^\alpha}$. If \space $||u||_{\infty}<M$, then $\tran{u}$ is Lipschitz with the Lipschitz constant that depends only on $M$, $\alpha$ and $\eps$. In particular this result holds for $X=Y=\Rd$ and the Coulomb cost $c(x,y):=\frac{1}{|x-y|^{d-2}}$.
	\end{proposition}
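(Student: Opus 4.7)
The plan is to write $\tran{u}(y) = -\eps \log F(y)$ with
\[F(y) := \int_X e^{(u(x)-c(x,y))/\eps}\, d\mu(x),\]
and combine a uniform positive lower bound on $F$ with a Lipschitz bound on $F$ itself; the elementary inequality $|\log a - \log b| \le |a-b|/\min(a,b)$ then yields the Lipschitz estimate for $\tran{u}$, with constant depending on $M$, $\alpha$, $\eps$ (and the structural constant $\kcal$ from \eqref{ipotesi1}).

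The lower bound on $F$ is immediate from $u \ge -M$, Jensen's inequality applied to the convex function $t \mapsto e^{-t/\eps}$, and assumption \eqref{ipotesi1}:
\[F(y) \ge e^{-M/\eps} \int_X e^{-c(x,y)/\eps}\, d\mu(x) \ge e^{-(M+\kcal)/\eps}.\]
For the Lipschitz bound on $F$, I use $\|u\|_\infty \le M$ together with the elementary estimate $|e^{-a}-e^{-b}| \le |a-b|\, e^{-\min(a,b)}$ to reduce the problem to bounding $\int_X |e^{-c(x,y_1)/\eps} - e^{-c(x,y_2)/\eps}|\, d\mu(x)$ linearly in $d(y_1,y_2)$. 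Setting $\delta := d(y_1,y_2)$, $r(x) := \min\{d(x,y_1),d(x,y_2)\}$, $R(x) := \max\{d(x,y_1),d(x,y_2)\}$, I split $X = A \cup B$ with $A := \{r \ge \delta\}$ and $B := \{r < \delta\}$. On $A$, the mean value theorem applied to $t \mapsto 1/t^\alpha$ and the triangle inequality give $|c(x,y_1) - c(x,y_2)| \le \alpha\delta/r(x)^{\alpha+1}$ and $R(x) \le 2r(x)$, so the integrand is bounded by $(\alpha\delta/\eps)\, r(x)^{-\alpha-1}\, e^{-(2r(x))^{-\alpha}/\eps}$. The crucial point is that $r \mapsto r^{-\alpha-1} e^{-(2r)^{-\alpha}/\eps}$ vanishes both as $r \to 0^+$ (super-exponentially) and as $r \to \infty$, hence is bounded on $(0,\infty)$ by some $C_1(\alpha,\eps)$, yielding a contribution bounded by $(\alpha C_1/\eps)\, \delta$.

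The main obstacle is the near region $B$, where the MVT factor $\alpha/r(x)^{\alpha+1}$ diverges. Here I use instead the crude bound $|e^{-c_1/\eps} - e^{-c_2/\eps}| \le 2\, e^{-\min(c_1,c_2)/\eps}$ and observe that on $B$ the triangle inequality forces $R(x) \le r(x) + \delta < 2\delta$, so $\min\{c_1,c_2\} = 1/R(x)^\alpha > (2\delta)^{-\alpha}$. Hence the integrand is dominated by the $x$-independent quantity $2\, e^{-(2\delta)^{-\alpha}/\eps}$, and since $\delta \mapsto e^{-(2\delta)^{-\alpha}/\eps}/\delta$ is bounded on $(0,\infty)$ by some $C_2(\alpha,\eps)$ (it vanishes super-exponentially as $\delta \to 0^+$ and like $1/\delta$ as $\delta \to \infty$), the contribution of $B$ is at most $2 C_2 \delta\,\mu(B) \le 2 C_2 \delta$. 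Combining these estimates with the lower bound on $F$ yields
\[|\tran{u}(y_1) - \tran{u}(y_2)| \le \eps\, \frac{|F(y_1)-F(y_2)|}{\min\{F(y_1),F(y_2)\}} \le \bigl(\alpha C_1 + 2\eps C_2\bigr)\, e^{(2M+\kcal)/\eps}\, d(y_1,y_2),\]
with Lipschitz constant depending only on $M$, $\alpha$, $\eps$ and $\kcal$.
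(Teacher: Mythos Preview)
Your argument is correct, and the overall architecture (uniform positive lower bound on $F$ via Jensen and \eqref{ipotesi1}, Lipschitz bound on $F$, then $|\log a-\log b|\le |a-b|/\min(a,b)$) is exactly the paper's. The difference lies in how the Lipschitz bound on $F$ is obtained. You split $|e^{-c_1/\eps}-e^{-c_2/\eps}|\le \eps^{-1}|c_1-c_2|\,e^{-\min(c_1,c_2)/\eps}$ and then apply the mean value theorem to $t\mapsto 1/t^\alpha$; since the latter is not Lipschitz near $0$, you are forced into the near/far decomposition $A\cup B$ and two separate sup estimates. The paper instead notes in one stroke that the \emph{composite} function $f(t)=e^{-1/(\eps t^\alpha)}$ is globally Lipschitz on $(0,\infty)$, with derivative $f'(t)=\frac{\alpha}{\eps}\,t^{-\alpha-1}e^{-1/(\eps t^\alpha)}$ bounded by an explicit constant $L(\eps)$; then the reverse triangle inequality $|d(x,y_1)-d(x,y_2)|\le d(y_1,y_2)$ gives $|F(y_1)-F(y_2)|\le e^{M/\eps}L(\eps)\,d(y_1,y_2)$ with no case analysis. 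Your route works and yields a constant of the same type, but the paper's observation that the exponential damping already tames the singularity of $t^{-\alpha-1}$ \emph{before} any splitting makes the argument considerably shorter.
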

	\begin{proof}
		Let  $-M \leq u \leq M$, 
		\[\tran{u}(y)=-\varepsilon\log \int_X e^{\frac{u(x)-c(x,y)}{\varepsilon}}d\mu(x).\]
		Using Jensen's inequality for the convex function $t\mapsto e^t$ we get
		\begin{align*}
			\int_X e^{\frac{u(x)-c(x,y)}{\varepsilon}}d\mu(x)&\ge \int_X e^{\frac{-M-c(x,y)}{\varepsilon}}d\mu(x)=e^{-\frac{M}{\eps}}\int_X e^{\frac{-c(x,y)}{\varepsilon}}d\mu(x)\\
			&\ge e^{-\frac{M}{\varepsilon}}e^{\frac{-\int c d\mu}{\varepsilon}}\ge e^{-\frac{M+\mathcal{K}}{\varepsilon}},
		\end{align*}
		where $\kcal$ is the constant of \eqref{ipotesi1}.\\
		Since the function $t\mapsto -\log t$ is Lipschitz on $[e^{-\frac{M+\mathcal{K}}{\varepsilon}},\infty)$ with a Lipschitz constant $e^{\frac{M+\mathcal{K}}{\varepsilon}}$, we only need to prove that the function 
		\[y\mapsto \int_Xe^{\frac{u(x)-c(x,y)}{\varepsilon}}d\mu(x)\]
		is Lipschitz with a Lipschitz constant that does not depend on the choice of $u$ with the bounds above.
		We observe that the function $f(t)= e^{-\frac{1}{\eps t^\alpha}}$ is Lipschitz on $(0,\infty)$ with Lipschitz constant
		\[ L(\eps)=(\frac{\eps}{\alpha})^\frac{1}{\alpha} (\alpha+1)^\frac{\alpha+1}{\alpha} e^{- \frac{\alpha+1}{\alpha}}. \]
		Therefore,  for $y_1,y_2\in \Rd$, we have 
		\begin{align*}
			& \left\lvert\int_{X}e^{\frac{u(x)-c(x,y_1)}{\varepsilon}}d\mu(x)-\int_{X}e^{\frac{u(x)-c(x,y_2)}{\varepsilon}}d\mu(x)\right\rvert \le \\
			&e^{\frac{M}{\varepsilon}}\int_{X}\left\lvert e^{-\frac{1}{\varepsilon d(x,y_1)^{d-2}}}-e^{-\frac{1}{\varepsilon d(x,y_2)^{d-2}}}\right\rvert d\mu(x)\le L(\eps) e^{\frac{M}{\varepsilon}} d(y_1,y_2).
		\end{align*}
		Putting everything together we obtain
		\begin{multline*}
			\left\lvert-\eps \log\int_{\Rd}e^{\frac{u(x)-c(x,y_1)}{\varepsilon}}d\mu(x)+\eps\log\int_{\Rd}e^{\frac{u(x)-c(x,y_2)}{\varepsilon}}d\mu(x)\right\rvert\le\\ \eps e^{\frac{2M+\mathcal{K}}{\eps}}L(\eps)d(y_1,y_2).
		\end{multline*}
		
	\end{proof}
	
	And we remark that in this case the Lipschitz constant blows up with $\eps$.
	
	\begin{proposition}\label{prop:regolaritactransgen}
		Let $X,Y$ be Polish spaces. If $||u||_{\infty}\le A$ for some $A\ge 0$, then for every $\delta>0$, there exists a set $N^\delta\subset Y$, such that $\nu( N^\delta)<\delta$, $K^\delta:=Y\setminus N^\delta$ is compact and 
		\begin{equation}
			|\tran{u}(y_1)-\tran{u}(y_2)|\le \omega_\delta(d(y_1,y_2)), \quad \text{for every} \ y_1,y_2\in K^\delta,
		\end{equation}
		where $\omega_\delta$ is a modulus of continuity.
	\end{proposition}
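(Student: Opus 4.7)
My plan is to decouple the $u$-dependence from the $c$-dependence by considering the $L^1(\mu)$-valued map $\Phi\colon Y\to L^1(\mu)$ given by $\Phi(y):=e^{-c(\cdot,y)/\eps}$. Since $0\le e^{-c/\eps}\le 1$ and $\mu$ is a probability measure, $\Phi(y)\in L^1(\mu)$ for every $y\in Y$. Moreover, for every $g\in L^\infty(\mu)$ the scalar map $y\mapsto\int_X g(x)e^{-c(x,y)/\eps}\,d\mu(x)$ is Borel by Fubini (the integrand is jointly Borel and bounded), so $\Phi$ is weakly Borel; together with the separability of $L^1(\mu)$, Pettis' measurability theorem upgrades this to strong Borel measurability into the separable Banach space $L^1(\mu)$.

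Next I invoke Lusin's theorem for Borel maps from the Polish probability space $(Y,\nu)$ into a Polish space: for every $\delta>0$ there is a compact set $K^\delta\subset Y$ with $\nu(Y\setminus K^\delta)<\delta$ such that $\Phi|_{K^\delta}$ is continuous, hence, by compactness of $K^\delta$, uniformly continuous, with some modulus $\tilde\omega_\delta$ depending on $\delta$, $c$, $\mu$, $\nu$, $\eps$ but not on $u$.

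Writing $F_u(y):=\int_X e^{u(x)/\eps}\Phi(y)(x)\,d\mu(x)=e^{-\tran{u}(y)/\eps}$, for $y_1,y_2\in K^\delta$ the bound $||u||_\infty\le A$ yields
\[
|F_u(y_1)-F_u(y_2)|\le e^{A/\eps}\,||\Phi(y_1)-\Phi(y_2)||_{L^1(\mu)}\le e^{A/\eps}\,\tilde\omega_\delta(d(y_1,y_2)).
\]
By Jensen's inequality (compare Lemma~\ref{boundabove}) together with assumption~\eqref{ipotesi1}, one has $F_u(y)\ge e^{-(A+\kcal)/\eps}$, while trivially $F_u(y)\le e^{A/\eps}$. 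On this positive interval the map $t\mapsto -\eps\log t$ is Lipschitz with constant $\eps\,e^{(A+\kcal)/\eps}$, so $\omega_\delta(s):=\eps\,e^{(2A+\kcal)/\eps}\,\tilde\omega_\delta(s)$ is a modulus of continuity for $\tran{u}$ on $K^\delta$, independent of $u$.

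The main obstacle is the first step. A more direct scalar approach, which applies Lusin to $c$ itself and splits the integral $\int_X |e^{-c(x,y_1)/\eps}-e^{-c(x,y_2)/\eps}|\,d\mu$ into a ``good'' set (where $c$ is uniformly continuous, contributing a term of order $\omega_K(d(y_1,y_2))$) and a small ``bad'' set, fails to deliver an honest modulus of continuity: the bad-set contribution is only bounded by its $\mu$-measure and therefore does not vanish as $d(y_1,y_2)\to 0$. The passage through the vector-valued Lusin theorem is precisely what converts such a measure bound into a genuine modulus.
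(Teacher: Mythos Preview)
Your proof is correct and takes a genuinely different route from the paper's. The paper proceeds exactly along the ``direct scalar'' path you describe in your last paragraph: it applies the scalar Lusin theorem to $c$ on $X\times Y$, discards the ``bad'' fibres $N^\sigma=\{y:\mu(M^\sigma_y)\ge\sqrt\sigma\}$ (which have $\nu$-measure $<\sqrt\sigma$), and arrives at
\[
|\tran{u}(y_1)-\tran{u}(y_2)|\le \omega_\sigma(d(y_1,y_2))+2\eps\sqrt\sigma\,e^{(2A+\kcal)/\eps}
\]
for $y_1,y_2\notin N^\sigma$. The defect you identify --- that the additive term does not vanish with $d(y_1,y_2)$ --- is then repaired by a second step: one takes a sequence $\sigma_k\downarrow 0$ with $\sum_k\nu(N^{\sigma_k})<\delta$, sets $K^\delta=\bigcap_k(Y\setminus N^{\sigma_k})$ (made compact by inner regularity), and defines $\omega_\delta(t):=\inf_k\{\omega_{\sigma_k}(t)+\beta_k\}$ with $\beta_k=2\eps\sqrt{\sigma_k}\,e^{(2A+\kcal)/\eps}$. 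This infimum is a genuine modulus of continuity, since for any $\tilde\eps>0$ one first fixes $k$ with $\beta_k<\tilde\eps/2$ and then shrinks $t$ so that $\omega_{\sigma_k}(t)<\tilde\eps/2$. So the scalar approach \emph{does} work, contrary to your concluding remark, but only after this extra diagonalisation trick.

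Your passage through the vector-valued Lusin theorem for $\Phi\colon Y\to L^1(\mu)$ is cleaner: it hides the infimum-over-scales step inside the abstract machinery and delivers a single modulus in one stroke. A minor trade-off is that in the paper's version the compact sets $K^\delta$ (built from Lusin applied to $c$) are independent of $\eps$, whereas your $\Phi$, and hence your $K^\delta$, depend on $\eps$; for the only use of the proposition in the paper (Arzel\`a--Ascoli in the Sinkhorn argument at fixed $\eps$) this makes no difference.
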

	\begin{proof}
		\underline{Step 1}: Since $c\in L^1_{\mu\otimes\nu}$, by Lusin's theorem, for every $\sigma>0$ there exists $M^\sigma\subset X\times Y$ such that $\mu\otimes\nu( M^\sigma)<\sigma$ and, fixed $C^\sigma:=X\times Y\setminus M^\sigma$, $c_{|C^\sigma}$ is continuous. Since $\mu,\nu$ are inner regular \footnote{indeed $\mu,\nu$ are Borel probability measures}, $C^\sigma$ can be chosen to be a compact set and thus $c$ is uniformly continuous on $C^\sigma$.  We will denote by $\omega_\sigma$ the modulus of continuity of $c$ on $C^\sigma$.  Let $M^\sigma_y:=\{x\in X \, : \, (x,y)\in M^\sigma\}\subset X$. 
		We define the set of \textit{bad} $y$'s as $N^\sigma:=\{ y \in Y \, : \, \mu(M^\sigma_y)\ge \sqrt{\sigma}  \}$. We observe that 
		\begin{equation*}
			\int_{N^\sigma}\mu(M^\sigma_y)d\nu(y)\le\mu\otimes\nu(M^\sigma)<\sigma
		\end{equation*}implies $\nu(N^\sigma)<\sqrt\sigma$.
		Let us consider $y_1,y_2\not\in N^\sigma$ and denote $X^\ast=X\setminus (M^\sigma_{y_1}\cup M^\sigma_{y_2})$. Then for every $x\in X^\ast$, $|c(x,y_1)-c(x,y_2)|\le \omega_\sigma (d(y_1,y_2))$. We assume without loss of generality that $\tran{u}(y_1)\ge \tran{u}(y_2)$. Then 
		\begin{align*}
			&|\tran{u}(y_1)-\tran{u}(y_2)|=\\&=-\eps\log\left(\int_{X}e^{\frac{u(x)-c(x,y_1)}{\varepsilon}}d\mu(x)\right)+\eps \log\left(\int_{X}e^{\frac{u(x)-c(x,y_2)}{\varepsilon}}d\mu(x)\right) \\
			&=\eps \log\left(\frac{\int_{X}e^{\frac{u(x)-c(x,y_1)+c(x,y_1)-c(x,y_2)}{\varepsilon}}d\mu(x)}{\int_{X}e^{\frac{u(x)-c(x,y_1)}{\varepsilon}}d\mu(x)}\right) \\
			&=\eps \log\left(\frac{\int_{X^\ast}e^{\frac{u(x)-c(x,y_1)+c(x,y_1)-c(x,y_2)}{\varepsilon}}d\mu(x)+\int_{M^\sigma_{y_1}\cup M^\sigma_{y_2}}e^{\frac{u(x)-c(x,y_2)}{\varepsilon}}d\mu(x)}{\int_{X}e^{\frac{u(x)-c(x,y_1)}{\varepsilon}}d\mu(x)}\right)
			\\&\le\eps \log\left(e^{\frac{\omega_\sigma(d(y_1,y_2))}{\eps}}\frac{\int_{X^\ast}e^{\frac{u(x)-c(x,y_1)}{\varepsilon}}d\mu(x)}{\int_{X}e^{\frac{u(x)-c(x,y_1)}{\varepsilon}}d\mu(x)}+\frac{\int_{M^\sigma_{y_1}\cup M^\sigma_{y_2}}e^{\frac{u(x)-c(x,y_2)}{\varepsilon}}d\mu(x)}{\int_{X}e^{\frac{u(x)-c(x,y_1)}{\varepsilon}}d\mu(x)}\right)
			\\&\le \eps \log\left(e^{\frac{\omega_\sigma(d(y_1,y_2))}{\eps}}+\frac{\int_{M^\sigma_{y_1}\cup M^\sigma_{y_2}}e^{\frac{u(x)-c(x,y_2)}{\varepsilon}}d\mu(x)}{\int_{X}e^{\frac{u(x)-c(x,y_1)}{\varepsilon}}d\mu(x)}\right).
		\end{align*}
		Now we notice that, since $||u||_\infty < A$ and $c\ge 0$,
		\begin{align*}
			\int_{M^\sigma_{y_1}\cup M^\sigma_{y_2}}e^{\frac{u(x)-c(x,y_2)}{\varepsilon}}d\mu(x)\le \mu(M^\sigma_{y_1}\cup M^\sigma_{y_2})e^{\frac{A}{\eps}},
		\end{align*}  and 
		\begin{align*}
			\int_{X}e^{\frac{u(x)-c(x,y_1)}{\varepsilon}}d\mu(x)\ge e^{\int_{X}{\frac{u(x)-c(x,y_1)}{\varepsilon}}d\mu(x)}\ge e^{\frac{-A-\mathcal{K}}{\eps}},
		\end{align*}
		where $\mathcal{K}$ is the constant in assumption \eqref{ipotesi1} on $c$. We also observe that,  since $y_1,y_2\not\in N^\sigma$, $\mu(M^\sigma_{y_1}\cup M^\sigma_{y_2})<2\sqrt{\sigma}$, by the definition of $N^\sigma$.
		So, putting everything together we get 
		\begin{align*}
			|\tran{u}(y_1)-\tran{u}(y_2)|&\le \eps\log\left(e^\frac{\omega_\sigma(d(y_1,y_2))}{\eps}+ \mu(M^\sigma_{y_1}\cup M^\sigma_{y_2})e^{\frac{2A+\mathcal{K}}{\eps}}\right)\\&\le \omega_\sigma(d(y_1,y_2))+\eps2\sqrt{\sigma}e^{\frac{2A+\mathcal{K}}{\eps}},
		\end{align*} 
		where in the last inequality we have used the fact that, if $a,b\ge 0$, then $e^a+b\le e^{a+b}$.
		
		\underline{Step 2}: We now fix $\delta>0$ and we construct a sequence $\sigma_k$ such that $\sum_{k=1}^\infty \nu( N^{\sigma_k})\le \delta $. We define $K^{\sigma_k}:= Y\setminus N^{\sigma_k}$, $N^\delta=\bigcup_{k}N^{\sigma_k}$ and $K^\delta =\bigcup_k K^{\sigma_k}$. Then $\nu(N^\delta)=\nu(\bigcup N^{\sigma_k})<\delta$, $\nu(K^\delta)>1-\delta$, and by the inner regularity of $\nu$ can be chosen \footnote{by possibly replacing $K^\delta$ with a compact set included in $K^\delta$ and with measure $\nu$ greater than $1-\delta$.} to be compact. Now, for every $y_1,y_2\in K^\delta$ we have 
		\begin{equation*}
			|\tran{u}(y_1)-\tran{u}(y_2)|\le \omega_{\sigma_k}(d(y_1, y_2)) + \beta_k, \quad \text{for every } k\in\N,
		\end{equation*}where $\beta_k:=\eps2\sqrt{\sigma_k}e^{\frac{A+\mathcal{K}}{\eps}}$. Therefore, by defining  $\omega_\delta(t):=\inf_{k}\{\omega_k(t) +\beta_{\sigma_k} \}$, one obtains
		\begin{equation*}
			|\tran{u}(y_1)-\tran{u}(y_2)|\le \omega_\delta (d(y_1,y_2)), \quad \text{for every} \ y_1,y_2\in K^\sigma.
		\end{equation*}
		We conclude by showing that $\omega_\delta$ is a modulus of continuity. First of all we notice that as an infimum of non-decreasing functions it is non-decreasing. Moreover, for any given $\tilde\eps$, let $k$ be such that $\beta_k\le \frac{\tilde \eps}{2}$ and $t$ such that $\omega_{\sigma_k}(t) <\frac{\tilde \eps}{2}$, then $\omega_\delta(t)< \tilde \eps$. 
	\end{proof}

	\subsection{Sinkhorn algorithm}
	
	Let $(u_0,v_0)$ be given by Corollary \ref{preparazione}. For $n\ge 0$,  we define the sequence 
	\begin{equation}\label{eq:sequencesinkhorn}
		\begin{cases}
			u_{n+1}:= \tran{v_n},\\
			v_{n+1}:= \tran{u_{n+1}}.
		\end{cases}
	\end{equation}
	\begin{theorem}\label{thm:convergenceSinkhorn}
		Let  $c:X\times Y\to [0,+\infty]$ be a cost function that satisfies the assumption \eqref{ipotesi1}. Then the sequence \eqref{eq:sequencesinkhorn} converges uniformly on compact sets to the unique maximizer  $(\bar u,\bar v)$ of $\deps$.
	\end{theorem}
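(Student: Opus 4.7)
My strategy is to combine the uniform $L^\infty$-bounds from Section \ref{sec:preliminary} with the local equicontinuity of Proposition \ref{prop:regolaritactransgen} to extract a subsequential limit via Arzel\`a--Ascoli, then identify this limit as the unique dual maximizer, and finally upgrade subsequential convergence to convergence of the whole sequence by uniqueness.

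\emph{Step 1 (inductive bounds and monotonicity).} I would first show by induction that every iterate inherits the four properties of the starting pair in Corollary \ref{preparazione}: $-\mathcal{K}\le u_n,v_n\le\mathcal{K}$, $\int u_n\,d\mu\ge 0$, $\int v_n\,d\nu\ge 0$, together with $v_n=\tran{u_n}$ and $u_{n+1}=\tran{v_n}$. Indeed, given the inductive hypothesis, Lemma \ref{simpledual} gives $\deps(u_n,v_n)=\int u_n\,d\mu+\int v_n\,d\nu$, so Corollary \ref{lem:positiveintegraltransform} propagates the nonnegativity of the integrals to the next iterate, and Lemmas \ref{boundabove}--\ref{cbelow} then propagate the $L^\infty$ bound. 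In particular $\int u_n\,d\mu\nearrow\alpha$ and $\int v_n\,d\nu\nearrow\beta$ are monotone and bounded, and by Corollary \ref{cor:increasingdualfunct} the sequence $\deps(u_n,v_n)\nearrow L:=\alpha+\beta$.

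\emph{Step 2 (compactness and diagonal extraction).} Applying Proposition \ref{prop:regolaritactransgen} with $A=\mathcal{K}$ on each side yields, for every $\delta>0$, compact sets $K_X^\delta\subset X$ and $K_Y^\delta\subset Y$ of complementary $\mu$- and $\nu$-measure less than $\delta$ on which \emph{all} $u_n$'s (resp.\ $v_n$'s) are equicontinuous with a single modulus $\omega_\delta$ independent of $n$. Arzel\`a--Ascoli on each pair $K_X^{1/m}\times K_Y^{1/m}$, followed by a diagonal extraction over $m$, produces a subsequence $(u_{n_k},v_{n_k})$ converging uniformly on each such compact set, hence pointwise $\mu\otimes\nu$-a.e., to a pair $(\bar u,\bar v)$ satisfying $\|\bar u\|_\infty,\|\bar v\|_\infty\le\mathcal{K}$.

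\emph{Step 3 (identification of the limit).} Because $-\mathcal{K}\le u_{n_k}\le\mathcal{K}$, the integrands $e^{(u_{n_k}(x)-c(x,y))/\eps}$ are dominated by $e^{\mathcal{K}/\eps}$ and converge $\mu$-a.e., so dominated convergence gives $\tran{u_{n_k}}(y)\to\tran{\bar u}(y)$ for $\nu$-a.e.\ $y$ (the limiting integral is strictly positive since $\bar u\ge -\mathcal{K}$ and $c(\cdot,y)<+\infty$ $\mu$-a.e.\ by \eqref{ipotesi1}). Since $v_{n_k}=\tran{u_{n_k}}\to\bar v$ as well, we get $\bar v=\tran{\bar u}$ $\nu$-a.e. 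The same argument applied along the shifted subsequence, using that any further cluster point of $(u_{n_k+1},v_{n_k+1})$ still lies in the same uniformly bounded equicontinuous family and has $\mu$-integral equal to $\alpha$, yields $\bar u=\tran{\bar v}$ $\mu$-a.e. Hence $(\bar u,\bar v)$ is a fixed point of the pair of $(c,\eps)$-transforms, that is, a stationary point of the strictly concave functional $\deps$, so it coincides with the unique maximizer $(u_\eps,v_\eps)$ of Theorem \ref{existsmax} (the residual additive constant being pinned down by $\int\bar u\,d\mu=\alpha$).

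\emph{Step 4 (full convergence and main obstacle).} Since every subsequence of $(u_n,v_n)$ admits a further subsequence converging uniformly on each $K_X^{1/m}\times K_Y^{1/m}$ to the \emph{same} limit $(u_\eps,v_\eps)$, the whole sequence converges uniformly on those compact sets, which is the claimed conclusion. The main technical obstacle is the clean commutation of the limit in $k$ with the $-\eps\log\!\int$ defining the $(c,\eps)$-transforms in Step 3; this is precisely where the uniform $L^\infty$ bound from Step 1 and the integrability provided by assumption \eqref{ipotesi1} are doing the essential work, and it is the only point in the argument requiring an analytic estimate beyond bare Arzel\`a--Ascoli.
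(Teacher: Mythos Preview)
Your proposal is correct and follows essentially the same approach as the paper: uniform $L^\infty$ bounds from Section~\ref{sec:preliminary}, local equicontinuity via Proposition~\ref{prop:regolaritactransgen}, Arzel\`a--Ascoli with a diagonal extraction, dominated convergence to pass limits through the $(c,\eps)$-transforms, a shifted-subsequence argument to obtain the second fixed-point relation, and uniqueness of the maximizer to upgrade to full convergence. The only cosmetic differences are that you track the monotone limits $\alpha,\beta$ of the individual integrals (which is a clean way to pin down the additive constant) and use the \emph{forward} shift $n_k+1$ instead of the paper's \emph{backward} shift $n_k-1$; in your Step~3 the implication ``$\int u'\,d\mu=\alpha \Rightarrow \bar u=\tran{\bar v}$'' should be read as $\deps(\tran{\bar v},\bar v)=\alpha+\beta=\deps(\bar u,\bar v)$ together with the uniqueness of the maximizer in Proposition~\ref{htrasform}, which is exactly the mechanism the paper invokes.
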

	\begin{proof}
		Thanks to the property of $(u_0,v_0)$, applying Lemma \ref{boundabove} and Lemma \ref{cbelow} to each $(u_n,v_n)$ we know that $||u_n||_\infty, ||v_n||_\infty \le \mathcal{K} $. For every $\delta>0$, let us consider the compact $K_1^\delta\subset X$ and $K_2^\delta \subset Y$ given by Proposition \ref{prop:regolaritactransgen}.  Then the sequence $\{(u_n,v_n)\}$ is uniformly continuous on the compact set $K^\delta:=K_1^\delta\times K_2^\delta$, for every $\delta>0$. 
		Let us consider $(K^{\delta_m})_m$, with $\delta_m\to 0$, as $m\to\infty$. Then $\mu(\bigcup_m K_1^{\delta_m})=\nu(\bigcup_m K_2^{\delta_m})=1$ by construction. By the Arzel\`a-Ascoli Theorem and the usual diagonal argument there exists a subsequence $(u^k_{n_k},v^k_{n_k})$ that converges uniformly on $K^{\delta_m}$, for every $m$, to a pair  $(\bar u, \bar v)$, with $||\bar u||_\infty, ||\bar v||_\infty \le \mathcal{K} $ and $(u^k_{n_k},v^k_{n_k})\to (\bar u, \bar v)$ $\mu\otimes \nu$-a.e. Therefore for any fixed $y$ in $\bigcup_m K_2^{\delta_m}$
		\begin{align}
			\label{eq:dominatedconvergence1}&e^{\frac{u_{n_k}(x)-c(x,y)}{\eps}}\to e^{\frac{\bar u(x)-c(x,y)}{\eps}}, \quad \text{for }\mu\text{- a.e.} \ x. 
		\end{align}
		Since $\left|e^{\frac{u_{n_k}(x)+v_{n_k}(y)-c(x,y)}{\eps}}\right|\le e^{\frac{2\mathcal{K}}{\eps}}$ and $\left|e^{\frac{u_{n_k}(x)-c(x,y)}{\eps}}\right|\le e^{\frac{2\mathcal{K}}{\eps}}$, for any fixed $y$, by the dominated convergence theorem, we have that 
		$\deps (u^k_{n_k},v^k_{n_k})\to \deps(\bar u,\bar v) $ 
		and that 
		\begin{align}\label{utov}
			\bar v(y)&=\lim_{k\to\infty}v_{n_k}(y)=\lim_{k\to\infty}-\eps\log\int e^{\frac{u_{n_k}(x)-c(x,y)}{\eps}}d\mu(x)\\
			&=-\eps\log\int e^{\frac{\bar u(x)-c(x,y)}{\eps}}d\mu(x)=\tran{\bar u}(y).
		\end{align}
		Since we would like to show that also $\bar u(x)= \tran{\bar v}(x)$, we consider the subsequence $(u_{n_{k}-1},v_{n_{k}-1})$ which converges, possibly up to subsequences, uniformly on compact sets and to some $(u',v')$. 
		By the definition and fundamental properties of $\tran{u}$ and $\tran{v}$, we know that (see equation \eqref{improve})
		\begin{equation}\label{eq:increasingdepsthm}
			\cdots\ge\deps (u_{n+1},v_{n+1})\ge \deps(u_n,v_n)\ge \deps (u_{n-1},v_{n-1})\ge \cdots,
		\end{equation} 	
		therefore 
		\begin{equation*}
			\deps(\bar u, \bar v)=\lim_{k\to \infty} \deps(u_{n_{k}},v_{n_{k}})=\lim_{k\to \infty} \deps(u_{n_{k}-1},v_{n_{k}-1})=\deps(u',v'),
		\end{equation*}
		We now observe that 
		\begin{align*}
			\bar u(x)&=\lim_{k\to \infty} u_{n_k}(x)=-\lim_{k\to \infty}\eps\log\int e^{\frac{v_{n_k-1}(y)-c(x,y)}{\eps}}d\nu(y)\\
			&=-\eps\log\int e^{\frac{v'(y)-c(x,y)}{\eps}}d\nu(y)=\tran{{v'}}.
		\end{align*}
		This implies 
		\begin{equation*}
			\deps(\bar u, \bar v)=\deps(u',v')\le \deps (\bar u,v'),
		\end{equation*}
		which, since $\bar v=\tran{\bar u}$, implies $\bar v=v'$ and thus $\bar u=\tran{\bar v}$. 
	\end{proof}
	
	\section{The limit as $\eps \to 0$}\label{sec: lim eps}
	In this section we investigate the limit $\eps\to0$ both for the primal and dual problems. 
	\subsection{The dual problem}
	Let us define the set 
	\[\mathcal{B}':=\{ u\in L^1_\mu, v\in L^1_\nu, \ u(x)+v(y) \leq c(x,y), \, \mu\otimes \nu \text{-a.e.} (x,y)\}.\]
	We prove the following convergence result as $\eps\to 0$ for the dual problem \eqref{dual}.
	\begin{proposition}\label{prop-sd}
		Let $(u_\eps,v_\eps)$, be the maximizer of $\mathcal{D}_\eps$. There exists a subsequence $(u_{\eps_k},v_{\eps_k})$ such that $u_{\eps_k}\stackrel{*}{\rightharpoonup}u^*$ in $L^\infty_\mu$  and $v_{\eps_k}\stackrel{*}{\rightharpoonup} v^*$  in $L^\infty_\nu$ and 
		\begin{equation}
			\label{eq:limitdual}
			\lim_{k\to\infty}\mathcal{D}_{\eps_k}(\uepsk,\vepsk)=\int_{X}u^* d\mu+\int_{Y}v^*d\nu.
		\end{equation}  
		Moreover the pair $(u^*,v^*)\in L^1_\mu \times L^1_\nu$ maximizes 
		\begin{equation}\label{dualestrano}
			\max_{(u,v)\in\mathcal{B}'}\int u d\mu +\int v d\nu.
		\end{equation}
	\end{proposition}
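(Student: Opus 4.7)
The plan is to split the argument into four steps: compactness, identification of the limit of $\mathcal{D}_{\eps_k}(u_{\eps_k},v_{\eps_k})$, feasibility $(u^*,v^*)\in\mathcal{B}'$, and optimality.

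First I would use Theorem \ref{existsmax}, which yields uniform bounds $-\mathcal{K}\le u_\eps,v_\eps\le\mathcal{K}$, so the families $\{u_\eps\}$ and $\{v_\eps\}$ are equibounded in $L^\infty_\mu$ and $L^\infty_\nu$ respectively. Since $\mu$ and $\nu$ are probability measures, $L^\infty$ sits inside the dual of $L^1$; Banach--Alaoglu then provides a subsequence $\eps_k\to 0$ and limits $u^*,v^*$ with $\|u^*\|_\infty,\|v^*\|_\infty\le\mathcal{K}$ such that $u_{\eps_k}\stackrel{*}{\rightharpoonup}u^*$ and $v_{\eps_k}\stackrel{*}{\rightharpoonup}v^*$.

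For the limit of the dual values I exploit that, by Corollary \ref{cor:minimizer}, the density $e^{(u_\eps\oplus v_\eps-c)/\eps}$ of $\bar\gamma_\eps$ with respect to $\mu\otimes\nu$ integrates to $1$. Substituting into \eqref{dualfunctional} gives the clean identity
\[
\mathcal{D}_\eps(u_\eps,v_\eps)=\int_X u_\eps\,d\mu+\int_Y v_\eps\,d\nu,
\]
and testing the weak-$*$ convergence against the constant function $1\in L^1_\mu$ (respectively $L^1_\nu$) yields \eqref{eq:limitdual}.

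Feasibility is the main technical point. The idea is to turn the marginal identity into a pointwise bound by means of the elementary inequality $e^t\ge 1+t$. Multiplying by an arbitrary nonnegative $\phi\in L^\infty(\mu\otimes\nu)$, integrating, and using $\int \phi\,d\bar\gamma_{\eps}\le\|\phi\|_\infty$ together with the probability property of $\bar\gamma_\eps$, I obtain
\[
\int_{X\times Y}\phi\,(u_\eps+v_\eps-c)\,d\mu\otimes\nu\le 2\eps\|\phi\|_\infty.
\]
The left-hand side splits via Fubini into $\int(\int\phi\,d\nu)\,u_\eps\,d\mu+\int(\int\phi\,d\mu)\,v_\eps\,d\nu-\int\phi c\,d\mu\otimes\nu$; since $\int\phi\,d\nu\in L^1_\mu$, $\int\phi\,d\mu\in L^1_\nu$, and $c\in L^1(\mu\otimes\nu)$ by \eqref{ipotesi1}, passing to the limit along $\eps_k\to 0$ gives $\int\phi\,(u^*+v^*-c)\,d\mu\otimes\nu\le 0$ for every such $\phi$, whence $u^*(x)+v^*(y)\le c(x,y)$ for $\mu\otimes\nu$-a.e.\ $(x,y)$, i.e.\ $(u^*,v^*)\in\mathcal{B}'$.

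Finally, for maximality, pick any $(u,v)\in\mathcal{B}'$. Then $u+v-c\le 0$ a.e., so $e^{(u+v-c)/\eps}\le 1$, and from \eqref{dualfunctional} I get $\mathcal{D}_\eps(u,v)\ge \int u\,d\mu+\int v\,d\nu$. Combining this with the optimality of $(u_\eps,v_\eps)$ and the identity in the second step,
\[
\int_X u_\eps\,d\mu+\int_Y v_\eps\,d\nu=\mathcal{D}_\eps(u_\eps,v_\eps)\ge\mathcal{D}_\eps(u,v)\ge \int_X u\,d\mu+\int_Y v\,d\nu,
\]
and letting $\eps=\eps_k\to 0$ yields $\int u^*\,d\mu+\int v^*\,d\nu\ge\int u\,d\mu+\int v\,d\nu$, proving that $(u^*,v^*)$ solves \eqref{dualestrano}. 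The only delicate step is the feasibility argument, where the key trick is the $e^t\ge 1+t$ linearization that converts the (nonlinear) marginal constraint of $\bar\gamma_\eps$ into a linear estimate amenable to weak-$*$ limits.
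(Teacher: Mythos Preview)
Your argument is correct. Steps 1, 2 and 4 coincide with the paper's proof up to cosmetic differences (the paper obtains the identity $\deps(u_\eps,v_\eps)=\int u_\eps\,d\mu+\int v_\eps\,d\nu$ by quoting Lemma \ref{simpledual} rather than Corollary \ref{cor:minimizer}, but the content is identical).

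The feasibility step is where you genuinely diverge. The paper proceeds by contradiction: assuming $u^*+v^*>c+\alpha$ on a set $B_\alpha$ of positive $\mu\otimes\nu$-measure, it extracts, via a short measure-theoretic argument, sets $B_k\subset B_\alpha$ with $\mu\otimes\nu(B_k)>\lambda>0$ on which $u_{\eps_k}+v_{\eps_k}-c>\tilde\alpha/2$, and then bounds $\mathcal{D}_{\eps_k}(u_{\eps_k},v_{\eps_k})\le 2\kcal-\eps_k e^{\tilde\alpha/(2\eps_k)}\lambda+\eps_k\to -\infty$, contradicting \eqref{eq:limitdual}. Your route via $e^t\ge 1+t$ is more direct: it converts the nonlinear constraint $\int e^{(u_\eps+v_\eps-c)/\eps}\,d\mu\otimes\nu=1$ into the linear inequality $\int\phi(u_\eps+v_\eps-c)\,d\mu\otimes\nu\le 2\eps\|\phi\|_\infty$, which passes immediately to the weak-$*$ limit thanks to the Fubini splitting and $c\in L^1(\mu\otimes\nu)$. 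This avoids the contradiction machinery and the auxiliary sets $B_k$ altogether. The paper's argument, on the other hand, is more robust in that it uses only the finiteness of $\lim_k\deps(u_{\eps_k},v_{\eps_k})$ and the uniform bound $\|u_\eps\|_\infty,\|v_\eps\|_\infty\le\kcal$, without ever invoking that the exponential integrates exactly to one.
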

	\begin{proof}
		By Theorem \ref{existsmax} we know that $-\kcal\le u_\eps,v_\eps\leq \kcal$ for every $\eps>0$. The $L^\infty$-weak* convergence, up to a subsequences, of $u_\eps$ and $v_\eps$ respectively to $u^*$ and $v^*$ follows directly by the Banach-Alaoglu theorem.
		This implies that $\uepsk(x)+\vepsk (y)\stackrel{\ast}{\rightharpoonup} u^\ast (x)+v^\ast(y)$ in $L^\infty_{\mu\otimes\nu}$ as $k\to \infty$, and therefore
		\begin{align*}
			\lim_{k\to\infty}\mathcal{D}_{\eps_k}(\uepsk,\vepsk)=\lim_{k\to\infty}\int_{X}\uepsk d\mu+\int_{Y}\vepsk d\nu=\int_{X}u^* d\mu+\int_{Y}v^* d\nu,
		\end{align*}
		where the first equality is due Lemma \ref{simpledual}, since $u_\eps=\tran{v_\eps}$ and $v_\eps=\tran{u_\eps}$.
		Let us prove that $u^\ast(x)+v^\ast(y)\le c(x,y)$, for $\mu\otimes\nu$-a.e $(x,y)$.
		Assume, by contradiction, that there exists $\alpha>0$ and $B_\alpha\subset X\times Y$ such that $\mu\otimes\nu(B_\alpha)>0$ and 
		\begin{equation*}
			u^\ast(x)+v^\ast(y)-c(x,y)>\alpha \quad \text{on} \ B_\alpha.
		\end{equation*}
		Since the indicator function of $B_\alpha$ is in $L^1_{\mu\otimes\nu}$, for $k$ large enough 
		\begin{equation*}
			\int_{B_\alpha}\uepsk(x)+\vepsk(y)-c(x,y) d {\mu\otimes\nu}(x,y) >\frac{\alpha}{2} \mu\otimes\nu(B_\alpha)=:\tilde \alpha
		\end{equation*}
		and, therefore, there exists $\lambda>0$ such that for every $k$ large enough the set $B_k:=\{(x,y)\in B_\alpha \, : \,  \uepsk(x)+\vepsk(y)-c(x,y)>\frac{\tilde\alpha}{2} \}$ satisfies $\mu\otimes\nu (B_k) >\lambda$. 
		Indeed, if $\mu\otimes\nu (B_k)\to 0$, then for $k$ large enough
		\begin{align*}
			&\int_{B_\alpha}\uepsk(x)+\vepsk(y)-c(x,y) d\mu\otimes \nu(x,y)\\& = \int_{B_k}\uepsk(x)+\vepsk(y)-c(x,y)d\mu\otimes \nu(x,y)+\int_{B_\alpha\setminus B_k}\uepsk(x)+\vepsk(y)-c(x,y)d\mu\otimes \nu(x,y)\\& \le 2\kcal\mu\otimes\nu(B_k)+ \mu\otimes\nu(B_\alpha)\frac{\tilde\alpha}{2}<\tilde\alpha.
		\end{align*}
		Therefore one gets that 
		\begin{align*}
			\mathcal{D}_{\eps_k}(\uepsk,\vepsk) &\le 2\mathcal{K} -\eps_k\int_{B_k}e^{\frac{\uepsk(x)+\vepsk(y)-c(x,y)}{\eps_k}}d {\mu\otimes\nu (x,y)} +\eps_k\\& \le  2\mathcal{K}   -\eps_k e^{\frac{\tilde \alpha}{2 \eps_k}} \mu\otimes\nu(B_k) +\eps_k,
		\end{align*}
		which tends to $-\infty$ as $k$ tends to $+\infty$, contradicting \eqref{eq:limitdual}.\\
		We conclude by showing the maximality of $(u^\ast,v^\ast)$. Let $(u,v)$ be such that $u+v\le c$, $\mu\otimes\nu$-a.e. Then 
		\begin{align*}
			&\int_{X}u^*(x) d\mu(x)+\int_{Y}v^*(y) d\nu(y)=\lim_{k\to\infty}\int_{X}\uepsk(x) d\mu(x)+\int_{Y}\vepsk(y) d\nu(y)\\& \ge \lim_{k\to\infty}\int_{X}u(x) d\mu(x)+\int_{Y}v(y )d\nu(y)-\eps_k\int_{X\times Y}e^{\frac{u(x)+v(y)- c(x,y)}{\eps_k}}d\mu\otimes\nu(x,y)  +\eps_k\\&=\int_{X}u(x) d\mu(x)+\int_{Y}v(y) d\nu(y).
		\end{align*}
	\end{proof}
	\begin{remark}
		As shown for instance in \cite[Lemma 2.6]{NutWie2022PTRF}, it is not hard to see that if $c$ is upper semicontinuous (and thus continuous) then
		\begin{equation*}
			\mathcal{G}'=\mathcal{B}',
		\end{equation*}
		where
		\begin{align*}
			\mathcal{G}':&=\{ u\in L^1_\mu, v\in L^1_\nu, \ u(x)+v(y) \leq c(x,y), \,  \mu\text{-a.e.} \ x,  \, \nu\text{-a.e.} \ y\}.
		\end{align*}
		
		In particular, 
		the pair $(u^\star,v^\star)$ given by Proposition \ref{prop-sd} satisfies
		\begin{align*}
			\int u^\star d\mu + \int v^\star d\nu = \max_{(u,v)\in\mathcal{G}'}\int u d\mu + \int v d\nu = \min_{\gamma \in \Pi(\mu,\nu)}\int c(x,y)d\gamma,
		\end{align*}
		by duality in optimal transport.
	\end{remark}
	However, as shown in the example below, if $c$ is not upper semicontinuous, it might happen $\mathcal{G}'\subsetneq \mathcal{B}'$ and 
	\begin{equation*}
		\max_{(u,v)\in\mathcal{G}'}\int u d\mu + \int v d\nu < \max_{(u,v)\in\mathcal{B}'}\int u d\mu + \int v d\nu.
	\end{equation*}
	\begin{example}\label{ex: counterexample}
		Let $X=Y=\R$, $\mu=\nu=\Leb^1_{|[0,1]}$ and $c:\R\times\R\to[0,+\infty]$ be defined by
		\begin{equation*}
			c(x,y):=\begin{cases}
				\frac{1}{2} \quad \text{if} \ 0\le x\le 1 \ \text{and} \ y<x;\\
				1 \quad \text{if} \ 0\le x\le 1 \ \text{and} \ y>x;\\
				0 \quad \text{if} \  0\le x\le 1 \  \text{and} \ y=x.
			\end{cases}
		\end{equation*}
		In this case the pair $(u,v)\equiv(\frac14,\frac14)\in\mathcal{B}'\setminus\mathcal{G}'$. Indeed if $(u,v)\in \mathcal{G}'$ then $u(x)+v(x)\le c(x,x)=0$. In particular \[\max_{(u,v)\in\mathcal{G}'}\int u d\mu + \int v d\nu=0<\frac12 \le \max_{(u,v)\in\mathcal{B}'}\int u d\mu + \int v d\nu. \]
	\end{example}
Let  $\tilde c:X\times Y\to \R \cup \{+\infty\}$ is the regularization of $c$ defined in \eqref{eq:ctilde}, i.e, for any $(\bar x, \bar y)\in X\times Y$,
\begin{equation*}
	\tilde{c}(\bar x,\bar y):=\sup\{ t \, : \, \exists r>0, \, \text{s.t.} \, \mu\otimes\nu (B_r(\bar x,\bar y)\cap \{c>t\})= \mu\otimes\nu (B_r(\bar x,\bar y)) \}.
\end{equation*}
All the interesting properties of this regularization can be found in Theorem \ref{propertiestilde}.
	\begin{proposition}  \label{prop2-ds}
		\begin{equation}\label{dualitastrana}
			\max_{(u,v)\in\mathcal{B}'}\int u d\mu +\int v d\nu\leq\min_{\gamma \in \Pi(\mu,\nu)} \int \tilde{c} (x,y) d \gamma=:\min_{\gamma \in \Pi(\mu,\nu)}\tilde{\mathcal{C}}(\gamma).
		\end{equation}
	\end{proposition}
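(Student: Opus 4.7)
The strategy is to fix an arbitrary pair $(u,v)\in\mathcal{B}'$ and an arbitrary $\gamma\in\Pi(\mu,\nu)$ and prove the estimate $\int u\,d\mu+\int v\,d\nu\leq\int\tilde c\,d\gamma$; taking the supremum in $(u,v)$ and infimum in $\gamma$ then yields \eqref{dualitastrana}. Since $\gamma$ has marginals $\mu,\nu$, Fubini rewrites the left-hand side as $\int(u(x)+v(y))\,d\gamma$, so what one really needs is the pointwise bound $u(x)+v(y)\leq\tilde c(x,y)$ for $\gamma$-a.e.\ $(x,y)$. The delicate point is that the constraint defining $\mathcal{B}'$ only supplies this inequality $\mu\otimes\nu$-a.e., while $\gamma$ can charge $\mu\otimes\nu$-null sets (for instance a $\gamma$ concentrated on the diagonal). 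The fact that $c=\tilde c$ $\mu\otimes\nu$-a.e.\ is therefore not enough on its own.

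My plan is to bypass this measure-theoretic mismatch by replacing $u$ and $v$ with pointwise-defined regularizations built exactly in the spirit of $\tilde c$: set, for every $\bar x\in X$ and $\bar y\in Y$,
\[\hat u(\bar x):=\sup_{r>0}\operatorname*{ess\,inf}_{B_r(\bar x)}u,\qquad\hat v(\bar y):=\sup_{r>0}\operatorname*{ess\,inf}_{B_r(\bar y)}v.\]
The Vitali-type differentiation theorem for Polish spaces (Theorem \ref{thm:vitalicovering}) furnishes "Lebesgue points" for $u$ and $v$ of full $\mu$- and $\nu$-measure at which $\hat u=u$ and $\hat v=v$, so these regularizations are everywhere-defined representatives of $u$ and $v$ that preserve the integrals: $\int\hat u\,d\mu=\int u\,d\mu$ and $\int\hat v\,d\nu=\int v\,d\nu$.

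The heart of the argument, and the step I expect to be the main obstacle, is the pointwise inequality $\hat u(\bar x)+\hat v(\bar y)\leq\tilde c(\bar x,\bar y)$ at \emph{every} point. Given $\delta>0$ and $r>0$ small, the definitions of $\hat u$ and $\hat v$ give $u(x)\geq\hat u(\bar x)-\delta/2$ for $\mu$-a.e.\ $x\in B_r(\bar x)$ and $v(y)\geq\hat v(\bar y)-\delta/2$ for $\nu$-a.e.\ $y\in B_r(\bar y)$; combining with the $\mathcal{B}'$ constraint,
\[\hat u(\bar x)+\hat v(\bar y)-\delta\leq u(x)+v(y)\leq c(x,y)\quad\mu\otimes\nu\text{-a.e.\ on }B_r(\bar x)\times B_r(\bar y).\]
Taking $\operatorname*{ess\,inf}$ on the right and sending $r\to 0$, the definition \eqref{eq:ctilde} of $\tilde c$ yields $\hat u(\bar x)+\hat v(\bar y)-\delta\leq\tilde c(\bar x,\bar y)$, and then $\delta\to 0$ closes this step. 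Aligning the product ball $B_r(\bar x)\times B_r(\bar y)$ with the ball $B_r(\bar x,\bar y)$ appearing in \eqref{eq:ctilde} is automatic for a sup-type product metric, and otherwise requires only a cosmetic adjustment of radii. Plugging the pointwise bound into the reformulation of the left-hand side,
\[\int u\,d\mu+\int v\,d\nu=\int_{X\times Y}\bigl(\hat u(x)+\hat v(y)\bigr)\,d\gamma\leq\int\tilde c\,d\gamma,\]
which is the desired inequality.
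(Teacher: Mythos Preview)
Your argument has a genuine gap at step~1: the claim that $\hat u(\bar x)=\sup_{r>0}\operatorname*{ess\,inf}_{B_r(\bar x)}u$ equals $u(\bar x)$ at $\mu$-a.e.\ point is false in general. The Lebesgue differentiation theorem (Theorem~\ref{thm:vitalicovering}) controls \emph{averages} of $u$ over shrinking sets, not essential infima; a set of arbitrarily small positive measure inside $B_r(\bar x)$ suffices to pull the essential infimum far below the average. Concretely, take $X=[0,1]$, $\mu=\Leb^1$ and $u=\chi_A$ where $A\subset[0,1]$ is a measurable set such that both $A$ and $A^c$ have positive measure in every subinterval (such sets exist). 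Then for every $\bar x$ and every $r>0$ one has $\operatorname*{ess\,inf}_{B_r(\bar x)}\chi_A=0$, so $\hat u\equiv 0$; yet $u=1$ on the positive-measure set $A$. Hence $\int\hat u\,d\mu=0<\mu(A)=\int u\,d\mu$, and the identity $\int u\,d\mu+\int v\,d\nu=\int(\hat u(x)+\hat v(y))\,d\gamma$ on which your final chain rests collapses. The pointwise inequality $\hat u(\bar x)+\hat v(\bar y)\leq\tilde c(\bar x,\bar y)$ that you derive in step~2 is correct, but it only yields the useless bound $\int\hat u\,d\mu+\int\hat v\,d\nu\leq\int\tilde c\,d\gamma$.

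The paper avoids this obstruction by a different route: it first reduces $\max_{\mathcal{B}'}$ to a supremum over \emph{continuous} bounded pairs $(u,v)$ satisfying the same $\mu\otimes\nu$-a.e.\ constraint, and then uses continuity of $u,v$ (rather than a regularization of them) together with the definition of $\tilde c$ to upgrade the a.e.\ inequality to $u(\bar x)+v(\bar y)\leq\tilde c(\bar x,\bar y)$ for all $(\bar x,\bar y)\in\spt\mu\times\spt\nu$; the conclusion then follows from standard optimal-transport duality for the cost $\tilde c$. If you wish to salvage your approach, the natural fix is to replace the ess\,inf-based $\hat u$ by an \emph{approximate lower limit} in the density sense (i.e.\ $\sup\{t:\mu(B_r(\bar x)\cap\{u>t\})/\mu(B_r(\bar x))\to 1\}$), which does agree with $u$ at Lebesgue points; but then step~2 becomes substantially more delicate, since ``density one'' on each factor does not immediately give ``full $\mu\otimes\nu$-measure'' on the product ball as your essential-infimum version does.
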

	\begin{proof}
		Let us define 
		\begin{equation*}
			\mathcal{B}:=\{u\in C_b(X), v\in C_b(Y), \ u(x)+v(y) \leq c(x,y), \, \mu \otimes \nu\text{-a.e.}\}.
		\end{equation*}
		Then, by the density of the continuous and bounded functions in $L^1$,
		\begin{equation*}
			\max_{(u,v)\in\mathcal{B}'}\int u d\mu + \int v d \nu = \sup_{(u,v)\in\mathcal{B}}\int u d\mu + \int v d \nu.
		\end{equation*}
		
		Let us also define 
		\begin{align*}
			& \tilde {\mathcal{G}}:= \{ u\in C_b(X), v\in C_b(Y), \ u(x)+v(y) \leq \tilde c(x,y), \,  \mu\text{-a.e.} \ x,  \, \nu\text{-a.e.} \ y\}.
		\end{align*}
		In order to prove \eqref{dualitastrana}, it is sufficient to prove that $\mathcal{B}\subseteq \tilde {\mathcal{G}}$. Indeed, duality in optimal transport ensures that
		\begin{equation*}
			\min_{\gamma \in \Pi(\mu,\nu)}\tilde {\mathcal{C}}(\gamma)=\sup_{(u,v)\in \tilde {\mathcal{G} }}  \int u d\mu +\int v d\nu.
		\end{equation*}
		Let $(u,v)\in \mathcal{B}$ and let $(\bar x, \bar y)\in\spt \mu\otimes \nu=\spt \mu \times \spt \nu$. We want to show that $u(\bar x )+ v(\bar y)\le \tilde c(\bar x, \bar y)$. 
		By the definition of $\tilde c$, for every $\alpha>0$ and  for every $r>0$,
		\[\mu\otimes\nu(\{(x,y) \, : \, \{ \tilde c(\bar x, \bar y)+\alpha >c(x,y)\} \cap B_r(\bar x)\times B_r(\bar y) \})>0\]
		and, in particular,  
		\[\mu\otimes\nu(\{(x,y) \, : \, \{\tilde c(\bar x, \bar y)+\alpha >c(x,y)\} \cap B_r(\bar x)\times B_r(\bar y)\cap F \})>0, \]
		where $F:=\{(x,y)\, : \, u(x)+v(y)\le c(x,y)\}$, which has $\mu\otimes\nu$ full measure by the  definition of $\mathcal{B}$. This allows us to define a sequence $(x_n,y_n)$, such that  $(x_n,y_n)\in \{(x,y) \, : \, \tilde c(\bar x, \bar y)+\alpha >c(x,y)\cap B_{r_n}(\bar x)\times B_{r_n}(\bar y)\cap F \}$, for every $n\in\N$, and $r_n\to 0$ as $n\to \infty$. Then we have 
		\begin{equation*}
			u(\bar x)+ v(\bar y)= \lim_{n\to \infty} u(x_n)+ v(y_n)\le \liminf_{n\to \infty}c(x_n,y_n)\le c(\bar x, \bar y)+ \alpha,
		\end{equation*}
		where the first equality holds thanks to the continuity of $u$ and $v$. We conclude by the arbitrariness of $\alpha$.
	\end{proof}
	\begin{remark} \label{rmk:uppersemicont_ctilde}
	Thanks to Theorem \ref{propertiestilde}, we know that $\tilde c= c$, $\mu\otimes\nu $- a.e.. In other words, $\tilde c$ is a representative of the $L^1(\mu\otimes\nu)$ - class of $c$. From the proof of Proposition \ref{prop2-ds}, one can see that $\tilde c$ is constructed so that, if there is a point $(x,y)$
	where the inequality $ \phi(x) + \psi (y) > \tilde c (x,y)$ holds, then the same inequality is satisfied in a set of $\mu \otimes \nu$-positive measure in a ball around
	$(x,y)$. 
	Since $\phi(x)+\psi(y)\le \tilde c(x,y)$, $\mu\otimes \nu$ - a.e., this leads
	to a contradiction and therefore $\phi(x)+\psi(y)\le \tilde c(x,y)$
	everywhere in the support of $\mu\otimes\nu$.
This happens whenever the cost function is upper semicontinuous. So this choice of $\tilde c$ in some sense replace upper semicontinuity with a weaker property which depends on the probability measure $\mu\otimes\nu$. 
	\end{remark}
	\subsection{The primal problem}
	\begin{theorem}\label{thm: liminfprimal}
		Let $\gamma_\eps,\gamma \in \Pi (\mu,\nu)$ be such that $\gamma_\eps \stackrel{*}{\rightharpoonup} \gamma$. Then 
		\[\liminf_{\eps\to 0} \peps (\gamma_\eps)\geq  \int \tilde{c}(x,y) d\gamma.\]
	\end{theorem}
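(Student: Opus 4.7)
The plan is to decompose $\peps(\gamma_\eps)=\int c\,d\gamma_\eps+\eps\,\mathcal{H}(\gamma_\eps|\mu\otimes\nu)$, throw away the entropy contribution via a trivial lower bound, and then exploit the two features of the regularized cost $\tilde c$ recorded in Theorem~\ref{propertiestilde}: namely that $\tilde c=c$ holds $\mu\otimes\nu$-almost everywhere, and that $\tilde c$ is lower semicontinuous on $X\times Y$.

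I would first reduce to the case $\peps(\gamma_\eps)<+\infty$ along the relevant subsequence; otherwise the inequality is immediate. This forces $\gamma_\eps\ll\mu\otimes\nu$. Applying Jensen's inequality to the convex function $x\mapsto x\log x$ and using that $\tfrac{d\gamma_\eps}{d\mu\otimes\nu}$ integrates to $1$, one gets $\mathcal{H}(\gamma_\eps|\mu\otimes\nu)\geq -1$, hence
\[
\peps(\gamma_\eps)\geq \int_{X\times Y} c(x,y)\,d\gamma_\eps(x,y) - \eps.
\]
Because $\gamma_\eps\ll\mu\otimes\nu$ and $c=\tilde c$ $\mu\otimes\nu$-a.e., the cost integrand may be replaced by $\tilde c$ without changing the value of the integral, so that
\[
\peps(\gamma_\eps)\geq \int_{X\times Y}\tilde c(x,y)\,d\gamma_\eps(x,y) - \eps.
\]

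The last step is to apply the standard weak$^*$ lower-semicontinuity result for integrals against probability measures: since $\tilde c$ is lower semicontinuous and nonnegative (again by Theorem~\ref{propertiestilde}) and $\gamma_\eps \stackrel{*}{\rightharpoonup}\gamma$, one writes $\tilde c$ as the pointwise supremum of an increasing sequence of bounded continuous nonnegative functions and invokes monotone convergence to conclude
\[
\liminf_{\eps\to 0}\int_{X\times Y}\tilde c\,d\gamma_\eps \geq \int_{X\times Y}\tilde c\,d\gamma.
\]
Letting $\eps\to 0$ in the previous bound then yields the claim.

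The genuinely delicate content of the argument has been outsourced to Theorem~\ref{propertiestilde}, so the main obstacle is not in the liminf argument itself but in verifying that $\tilde c$ is indeed a lower-semicontinuous representative of the $L^1(\mu\otimes\nu)$-class of $c$. Once those two properties are granted, the rest is a straightforward combination of an entropy lower bound, an almost-everywhere substitution, and Portmanteau-type lower semicontinuity; no compactness or duality machinery is required.
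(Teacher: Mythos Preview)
Your proof is correct and follows essentially the same route as the paper: reduce to $\gamma_\eps\ll\mu\otimes\nu$, use the lower bound $\mathcal{H}(\gamma_\eps|\mu\otimes\nu)\ge -1$ to discard the entropic term, swap $c$ for $\tilde c$ via Theorem~\ref{propertiestilde}(3), and conclude by lower semicontinuity of $\tilde c$ from Theorem~\ref{propertiestilde}(4). The only cosmetic difference is the order of the substitution and the entropy bound, and you spell out the Portmanteau step more explicitly than the paper does.
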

	
	\begin{proof} Let $\gamma_\eps$ and $\gamma$ be as in the assumptions. Without loss of generality we may assume that $\gamma_\eps\ll \mu \otimes \nu$ and this, by Theorem \ref{propertiestilde}, implies 
		the second equality in the chain below.
		\begin{eqnarray*}
			\liminf_{\eps\to 0} \peps (\gamma_\eps)&=&	\liminf_{\eps\to 0}   \int c(x,y) d \gamma_\eps+\eps \mathcal{H} (\gamma_\eps |\mu\otimes \nu ) \\
			&=&  \liminf_{\eps\to 0} \int \tilde c(x,y) d \gamma_\eps+\eps \mathcal{H} (\gamma_\eps |\mu\otimes \nu )\\
			& \geq& 	\liminf_{\eps\to 0}   \int \tilde{c}(x,y) d\gamma_\eps \\
			& \geq & \int \tilde{c}(x,y) d\gamma.
		\end{eqnarray*}
		The last inequality is, again, consequence of Theorem \ref{propertiestilde} which states the lower semicontinuity of $\tilde c$. 
	\end{proof}
	\begin{corollary}
		\begin{equation*}
			\max_{(u,v)\in\mathcal{B}'}\int u d\mu +\int v d\nu = \min_{\gamma \in \Pi(\mu,\nu)} \int \tilde{c} (x,y) d \gamma.
		\end{equation*}
	\end{corollary}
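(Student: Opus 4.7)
The plan is to combine the two asymptotic statements already at our disposal, Proposition \ref{prop-sd} on the dual side and Theorem \ref{thm: liminfprimal} on the primal side, tied together for each $\eps>0$ by the primal-dual identity of Corollary \ref{cor:minimizer}. Proposition \ref{prop2-ds} already gives the inequality $\le$, so only the reverse $\ge$ needs work.

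First I would set up the limiting objects. For each $\eps>0$, let $(u_\eps,v_\eps)$ be the dual maximizer from Theorem \ref{existsmax} and $\bar \gamma_\eps$ the unique primal minimizer from Corollary \ref{cor:minimizer}, so that
$$\mathcal{C}_\eps(\bar\gamma_\eps)=\mathcal{D}_\eps(u_\eps,v_\eps).$$
Proposition \ref{prop-sd} furnishes a subsequence $\eps_k\to 0$ along which $u_{\eps_k}\stackrel{*}{\rightharpoonup}u^*$, $v_{\eps_k}\stackrel{*}{\rightharpoonup}v^*$, with $(u^*,v^*)$ maximizing over $\mathcal{B}'$. Since $\mu$ and $\nu$ are Borel probability measures on Polish spaces, they are each tight, hence $\Pi(\mu,\nu)$ is tight and therefore weakly pre-compact; I refine the subsequence (keeping the same index) so that additionally $\bar \gamma_{\eps_k}\stackrel{*}{\rightharpoonup}\bar\gamma$ for some $\bar\gamma\in\Pi(\mu,\nu)$.

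Next I would apply the two convergence results at each end of this pair. Theorem \ref{thm: liminfprimal} yields
$$\liminf_{k\to\infty}\mathcal{C}_{\eps_k}(\bar\gamma_{\eps_k})\;\ge\; \int \tilde c(x,y)\, d\bar\gamma(x,y) \;\ge\; \min_{\gamma\in\Pi(\mu,\nu)}\int \tilde c\, d\gamma,$$
whereas Proposition \ref{prop-sd}, together with the maximality of $(u^*,v^*)$ asserted there, gives
$$\lim_{k\to\infty}\mathcal{D}_{\eps_k}(u_{\eps_k},v_{\eps_k})=\int u^*\, d\mu+\int v^*\, d\nu \;=\; \max_{(u,v)\in\mathcal{B}'}\left(\int u\, d\mu+\int v\, d\nu\right).$$

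Finally, the identity $\mathcal{C}_{\eps_k}(\bar\gamma_{\eps_k})=\mathcal{D}_{\eps_k}(u_{\eps_k},v_{\eps_k})$ forces the two displays above to share the same value, yielding
$$\max_{(u,v)\in\mathcal{B}'}\left(\int u\, d\mu+\int v\, d\nu\right) \;\ge\; \min_{\gamma\in\Pi(\mu,\nu)}\int \tilde c\, d\gamma,$$
which together with Proposition \ref{prop2-ds} produces the claimed equality. There is no real obstacle here: the argument is a pinch in which Corollary \ref{cor:minimizer} absorbs the duality gap at each positive $\eps$, and the only technical point worth flagging is the routine diagonal refinement that brings $(u_{\eps_k},v_{\eps_k})$ and $\bar\gamma_{\eps_k}$ into convergence along a common subsequence.
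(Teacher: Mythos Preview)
Your proposal is correct and follows essentially the same approach as the paper: both arguments sandwich the equality by combining Proposition \ref{prop2-ds} (for $\le$) with the chain $\min \int \tilde c\,d\gamma \le \liminf \mathcal{C}_{\eps_k}(\bar\gamma_{\eps_k}) = \lim \mathcal{D}_{\eps_k}(u_{\eps_k},v_{\eps_k}) = \max_{\mathcal{B}'}$ (for $\ge$), invoking Theorem \ref{thm: liminfprimal}, Corollary \ref{cor:minimizer}, and Proposition \ref{prop-sd} in exactly this order. You are slightly more explicit than the paper in extracting a weak-$*$ convergent subsequence of $(\bar\gamma_{\eps_k})$ via tightness of $\Pi(\mu,\nu)$, which the paper leaves implicit.
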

	\begin{proof}
		Let $(u_{\eps_k},v_{\eps_k})$ be a sequence given by Proposition \ref{prop-sd} and $\bar\gamma_{\eps_k}$ be the unique minimizer of $\mathcal{C}_{\eps_k}$ (see Corollary \ref{cor:minimizer}). Then 
		\begin{align*}
			&\min_{\gamma \in \Pi(\mu,\nu)} \int \tilde{c}  d\gamma\le\liminf_{k\to\infty}\mathcal{C}_{\eps_k}(\bar\gamma_{\eps_k})\\&=\lim_{k\to\infty} \deps(u_{\eps_k},v_{\eps_k})=\max_{(u,v)\in\mathcal{B}'}\int u d\mu +\int v d\nu\leq\min_{\gamma \in \Pi(\mu,\nu)} \int \tilde{c}  d\gamma,
		\end{align*}
		where the first inequality is due to Theorem \ref{thm: liminfprimal}, the first equality is due to Corollary \ref{cor:minimizer}, the second equality to Proposition \ref{prop-sd} and the last inequality to Proposition \ref{prop2-ds}.
	\end{proof}
	\begin{theorem}\label{thm:convergenceprimal} Let $(\bar\gamma_\eps)_\eps$ be the sequence of minimizers of $\peps$ (see Corollary \ref{cor:minimizer}). Then there exists a subsequence $(\bar\gamma_{\eps_k})_k$ and $\gamma_0\in \Pi(\mu,\nu)$ such that $\bar\gamma_{\eps_k} \stackrel{*}{\rightharpoonup} \gamma_0$ and 
		\[
		\ \mathcal{C}_{\eps_k} (\bar\gamma_{\eps_k}) \to \tilde{\mathcal{C}}(\gamma_0).\]
		In particular $\gamma_0$ is a minimizer for $\tilde{\mathcal{C}}$.
	\end{theorem}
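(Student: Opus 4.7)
The plan is to combine three ingredients already assembled in this section: compactness of $\Pi(\mu,\nu)$ under weak* convergence, the $\liminf$ inequality of Theorem \ref{thm: liminfprimal}, and the identification of the limit of minimum values obtained in the Corollary just above.

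First, I would extract a convergent subsequence. Since $X$ and $Y$ are Polish and $\mu,\nu$ are Borel probability measures, they are tight; hence $\mu\otimes\nu$ is tight, and so is every element of $\Pi(\mu,\nu)$ (all have the same marginals). By Prokhorov's theorem applied to the sequence $(\bar\gamma_\eps)_\eps$, there exist $\eps_k\to 0$ and $\gamma_0\in\Prob(X\times Y)$ with $\bar\gamma_{\eps_k}\stackrel{*}{\rightharpoonup}\gamma_0$. Weak* convergence of couplings preserves marginals (push-forward is continuous along tested bounded continuous functions depending only on one variable), so $\gamma_0\in\Pi(\mu,\nu)$.

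Next, the lower bound follows directly from Theorem \ref{thm: liminfprimal}:
\begin{equation*}
\liminf_{k\to\infty}\mathcal{C}_{\eps_k}(\bar\gamma_{\eps_k}) \;\geq\; \int \tilde c(x,y)\,d\gamma_0 \;=\; \tilde{\mathcal{C}}(\gamma_0).
\end{equation*}
On the other hand, the Corollary preceding this theorem (whose chain of inequalities I would simply cite) gives
\begin{equation*}
\lim_{k\to\infty}\mathcal{C}_{\eps_k}(\bar\gamma_{\eps_k}) \;=\; \max_{(u,v)\in\mathcal{B}'}\int u\,d\mu+\int v\,d\nu \;=\; \min_{\gamma\in\Pi(\mu,\nu)}\tilde{\mathcal{C}}(\gamma),
\end{equation*}
where the first equality uses Corollary \ref{cor:minimizer} and Proposition \ref{prop-sd}.

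Combining the two displays yields $\tilde{\mathcal{C}}(\gamma_0)\leq \min_{\gamma\in\Pi(\mu,\nu)}\tilde{\mathcal{C}}(\gamma)$, which forces equality; hence $\gamma_0$ is a minimizer of $\tilde{\mathcal{C}}$ and $\mathcal{C}_{\eps_k}(\bar\gamma_{\eps_k})\to\tilde{\mathcal{C}}(\gamma_0)$, as claimed. The only subtle point is really the compactness step, which depends on tightness of the marginals; all other work has been absorbed into Theorem \ref{thm: liminfprimal} (which in turn relies on the properties of $\tilde c$ in Theorem \ref{propertiestilde}) and into the preceding Corollary. No additional estimates on the entropy term are needed, since the liminf inequality already dominates the entropic contribution by nonnegativity in the relevant regime.
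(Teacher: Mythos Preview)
Your proposal is correct and follows essentially the same route as the paper: extract a weak* convergent subsequence of $(\bar\gamma_\eps)$, invoke Theorem \ref{thm: liminfprimal} for the lower bound, and use the duality chain (Corollary \ref{cor:minimizer}, Proposition \ref{prop-sd}, Proposition \ref{prop2-ds}) for the matching upper bound. The paper starts instead from the dual subsequence of Proposition \ref{prop-sd} and then passes ``up to subsequences'' on the primal side, while you first run Prokhorov and then cite the preceding Corollary; the two orderings are equivalent. One small point to tighten: the subsequence along which the preceding Corollary yields $\lim_k \mathcal{C}_{\eps_k}(\bar\gamma_{\eps_k})=\max_{\mathcal{B}'}$ is the one coming from Proposition \ref{prop-sd}, not a priori your Prokhorov subsequence, so you should pass to a common further subsequence (the paper hides this under ``up to subsequences'' as well).
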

	\begin{proof} Let $(u_{\eps_k},v_{\eps_k})$ be the subsequence  given by Proposition \ref{prop-sd}, i.e. s.t. 
		$u_{\eps_k}\stackrel{*}{\rightharpoonup} u^*$ and 	$v_{\eps_k}\stackrel{*}{\rightharpoonup} v^*$.
		Then taking $\bar\gamma_{\eps_k}$ given by \eqref{eq: minimizer}, up to subsequences, we have that
		\begin{align*}
			 \int \tilde{c} d \gamma_0&\le\liminf_{k\to \infty} \int c d \gamma_{\eps_k}+ \eps_k \mathcal{H} (\gamma_{\eps_k}| \mu \otimes\nu)\\&=	\lim_{k\to \infty}  \int u_{\eps_k }d\mu + \int v_{\eps_k} d \nu= \int u^* d\mu + \int v^* d \nu \le \min_{\gamma\in\pi(\mu,\nu)}\int\tilde cd\gamma.
		\end{align*}
		where the first inequality is due to Theorem \ref{thm: liminfprimal}, the first equality to Corollary \ref{cor:minimizer} and the last inequality follows from Proposition \ref{prop2-ds} above.
	\end{proof}
	
	\section{The multi-marginal case} \label{sec:MM}
	In this section we show that all the results proved in the previous sections hold also in the multi-marginal case. Let $N\ge 2$. Let $X_1,\dots, X_N$ be Polish spaces and let $c:X_1\times\cdots\times X_N \to [0, +\infty]$ be a lower semi-continuous transport cost. Given $\rho_1,\dots,\rho_N$ such that $\rho_i\in\Prob(X_i)$ for every $i=1,\dots,N$, we consider the functional 
	\begin{equation}\label{MMhmkproblem}
		\peps^N (\gamma):= \int_{\Xtimes} c(\xdots) d \gamma(\xdots) + \eps \mathcal{H} (\gamma|\rhotimes),
	\end{equation}
	where $\gamma\in\Pi(\rho_1,\dots, \rho_N):=\{\gamma \in \Prob(X_1\times\cdots\times X_N ) \, : \, \pi^i_\sharp \gamma = \rho_i \ \text{for every} \ i=1,\dots, N\}$ and $\mathcal{H}(\gamma|\rho_1\otimes\cdots\otimes \rho_N)$ is the relative entropy of $\gamma$ with respect to $\rho_1\otimes\cdots\otimes \rho_N$, as defined in \eqref{entropy}. The multi-marginal Entropic Optimal Transport problem then reads
	\begin{equation}\label{MM-EOT}\tag{MM-EOT}
		\min_{\gamma \in \Pi(\rho_1,\dots, \rho_N)} \peps^N (\gamma).
	\end{equation}
	
	As mentioned in the introduction one special case of interest is when the marginals are $\rho_1,\ldots,\rho_N:={\bf \rho} \in\mathcal{P}(\R^{d})$ (in the physical case, usually, $d=3$), and the cost is the Coulomb cost
	\[c(x_1,\ldots,x_N)=\sum_{1\le i<j\le N}\frac{1}{|x_i-x_j|^{d-2}}.\]
	
	We denote by  $\rho^N=\bigotimes_{i=1}^N\rho_i$. As in the two-marginal case, we assume that  
	\begin{equation}\label{ipotesi2MM} \tag{$A^{M}$}
		\sup_{x_i}\int_{\bigtimes\limits_{j\neq i}X_j}c(\xdots)d\bigotimes\limits_{j\neq i} \rho_j(x_1,\dots,x_{i-1},x_{i+1},\dots,x_N)\le \mathcal{K}<+\infty,
	\end{equation}
	for every $1,\dots,N$. This clearly implies that
		\begin{equation*}\label{ipotesi1MM}
		\int_{\Xtimes}c(\xdots)d\rho^N(\xdots)<+\infty.
	\end{equation*}
	The dual functional associated to $\pepsN$ then reads
	\[\depsN(u_1,\ldots,u_N)=\sum_{i=1}^N\int_{X_i}u_i(x_i)d\rho_i(x_i)-\varepsilon\int_{X_1\times\cdots\times X_N}e^{\frac{\sum_{i=1}^Nu_i(x_i)-c(x_1,\ldots,x_N)}{\varepsilon}}d\rho^N+\varepsilon.\]
	As in the two-marginal case, $\depsN(u_1,\ldots,u_N)>-\infty$ if and only if $u_i\in \LEnt_{\rho_i}$, for every $i$, where $\LEnt_{\rho_i}$is the set defined in \eqref{Lexp}. The dual problem associated to the multi-marginal EOT problem is 
	\begin{equation}\label{MM dual}\tag{MM-D}
		\max_{L^1_{\rho_1}\times \cdots \times L^1_{\rho_N}}\depsN(u_1,\ldots,u_N).
	\end{equation} 
	From the definition of $\depsN$ it follows immediately that 
	\begin{remark}\label{sum}
		If $\{a_i\}_{i=1}^N$ are constants such that $\sum_{i=1}^Na_i=0$, then 
		\[\depsN(u_1+a_1,u_2+a_2,\ldots,u_N+a_N)=\depsN(u_1,\ldots, u_N).\]
	\end{remark}
	
	Analogously to the two-marginal case, the following fact holds.
	
	\begin{proposition}\label{basicproperties}
		For $u_1,\ldots,u_{N-1}$, with $u_i\in\LEnt_{\rho_i}$ the functional \[v\mapsto \depsN(u_1,\dots, u_{N-1}, v)\] admits a unique maximizer which is given by
		{\small	\[\tranm{u_1,\ldots,u_{N-1}}(x_N)=-\varepsilon \log\int_{X_1\times\cdots\times X_{N-1}}e^{\frac{\sum_{i=1}^{N-1}u_i(x_i)-c(x_1,\ldots,x_N)}{\varepsilon}}d\rho^{N-1}(x_1,\ldots,x_{N-1}),\]}
		and it is called the entropic transform or $(c,\eps)$-transform of $u_1,\dots, u_{N-1}$. The same holds for any $i\in \{1,\dots,N\}$.
	\end{proposition}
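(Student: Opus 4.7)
The plan is to reduce the claim to the elementary pointwise inequality $t - e^t + 1 \le 0$ (with equality only at $t = 0$), exactly as one does in the two-marginal case of Proposition \ref{htrasform}. Write $\bar v := \tranm{u_1,\ldots,u_{N-1}}$ for the candidate maximizer. The key observation powering the comparison is that, by the very definition of $\bar v$, the identity
\begin{equation*}
e^{\bar v(x_N)/\eps} \int_{X_1\times\cdots\times X_{N-1}} e^{(\sum_{i<N} u_i - c)/\eps}\, d\rho^{N-1} = 1
\end{equation*}
holds for $\rho_N$-a.e.\ $x_N$, so that by Fubini $\int e^{(\sum_{i<N} u_i + \bar v - c)/\eps}\, d\rho^N = 1$. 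For a competitor $v \in \LEnt_{\rho_N}$, introducing $f := (v - \bar v)/\eps$ and using the same Fubini step collapses $\int e^{(\sum_{i<N} u_i + v - c)/\eps}\, d\rho^N$ to $\int_{X_N} e^f\, d\rho_N$, so that
\begin{equation*}
\depsN(u_1,\ldots,u_{N-1}, v) - \depsN(u_1,\ldots,u_{N-1}, \bar v) = \eps\int_{X_N}\bigl(f - e^f + 1\bigr) d\rho_N \le 0,
\end{equation*}
with equality precisely when $v = \bar v$ $\rho_N$-a.e. This simultaneously yields existence of the maximizer and its uniqueness.

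Before running the comparison I need to check that $\bar v \in \LEnt_{\rho_N}$, so that $\depsN(u_1,\ldots,u_{N-1},\bar v)$ is finite and the Fubini manipulations are legitimate. For the upper bound on $\bar v(x_N)$ I would apply Jensen's inequality to the convex function $-\log$, which combined with the marginal bound $\int c\, d\rho^{N-1} \le \kcal$ provided by \eqref{ipotesi2MM} gives $\bar v(x_N) \le \kcal - \sum_{i<N}\int u_i\, d\rho_i$, uniformly in $x_N$. For the lower bound I would drop the nonnegative $c$ inside the exponent to obtain $\int e^{(\sum_{i<N} u_i - c)/\eps}\, d\rho^{N-1} \le \prod_{i<N}\int e^{u_i/\eps}\, d\rho_i < \infty$, again a constant in $x_N$. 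Therefore $\bar v \in L^\infty_{\rho_N} \subset \LEnt_{\rho_N}$.

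I do not expect any real obstacle: the $N-1$ "frozen" potentials enter only as parameters inside the exponential, so once the integrability of $\bar v$ is in hand the argument is a single-variable computation identical in structure to the two-marginal proof. The statement for maximizing in any other slot $u_i$ follows by the same reasoning after relabelling the marginals, which justifies the final sentence of the proposition. The only mild care needed is in the application of Fubini, which is justified by positivity of the integrand together with the uniform bounds on $\bar v$ established above.
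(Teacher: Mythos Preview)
Your proof is correct and follows essentially the same route as the paper, which defers to Proposition~\ref{prop:aboututransf}: both arguments use Fubini to isolate the dependence on the last variable, reduce to the elementary one-variable optimization of $t\mapsto t-\eps b e^{t/\eps}$ (equivalently, your inequality $t-e^t+1\le 0$), and verify membership of $\bar v$ in $\LEnt_{\rho_N}$ via Jensen and assumption~\eqref{ipotesi2MM}. The only cosmetic difference is that the paper first finds the pointwise maximizer and then checks integrability, whereas you guess $\bar v$, establish the (slightly stronger) $L^\infty$ bound first, and then compute the difference of functionals directly; the mathematical content is the same.
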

	\begin{proof}
		See Proposition \ref{prop:aboututransf} in the Appendix. It extends naturally to the multi-marginal case.
	\end{proof}
	\begin{corollary}
		For every $u_1,\dots u_N$, with $u_i\in\LEnt_{\rho_i}$
		\begin{equation*}
			\depsN(u_1,\dots,u_{N-1}, \tranm{u_1,\dots,u_{N-1}})\ge \depsN(\udots).
		\end{equation*}
		The same holds for any $i=1,\dots,N$
	\end{corollary}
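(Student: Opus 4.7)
The plan is to deduce this corollary essentially for free from Proposition \ref{basicproperties}, mirroring how Corollary \ref{cor:increasingdualfunct} is obtained from Proposition \ref{htrasform} in the two-marginal setting. The content of the corollary is exactly the statement that replacing one argument of $\depsN$ by its entropic transform with respect to the other $N-1$ arguments does not decrease the value of the dual functional.

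More precisely, fix $(u_1,\dots,u_N)$ with each $u_i\in\LEnt_{\rho_i}$ and consider the one-variable slice
\begin{equation*}
\Phi(v):=\depsN(u_1,\dots,u_{N-1},v),\qquad v\in\LEnt_{\rho_N}.
\end{equation*}
By Proposition \ref{basicproperties}, $\Phi$ attains its unique maximum at $v=\tranm{u_1,\dots,u_{N-1}}$. Evaluating at $v=u_N$ and at $v=\tranm{u_1,\dots,u_{N-1}}$ then gives
\begin{equation*}
\depsN(u_1,\dots,u_{N-1},\tranm{u_1,\dots,u_{N-1}})=\max_{v}\Phi(v)\ge \Phi(u_N)=\depsN(u_1,\dots,u_N),
\end{equation*}
which is precisely the claimed inequality. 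One should also verify beforehand that $\tranm{u_1,\dots,u_{N-1}}$ belongs to $\LEnt_{\rho_N}$, so that both sides are finite and the comparison is meaningful; this is part of what Proposition \ref{basicproperties} provides, since for the maximizer to be a maximizer (as opposed to a supremum that is not attained) it must lie in the effective domain of $\depsN$ in its last argument.

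For the final sentence (``the same holds for any $i=1,\dots,N$''), the argument is identical after permuting the roles of the marginals: for any fixed index $i$, Proposition \ref{basicproperties} applies verbatim to the slice $u_i\mapsto\depsN(u_1,\dots,u_{i-1},u_i,u_{i+1},\dots,u_N)$ because the cost $c$ and the product measure $\rho^N$ are treated symmetrically in the definition of $\depsN$, and the entropic transform with respect to the remaining $N-1$ variables is the unique maximizer in the $i$-th slot. No step requires real work beyond Proposition \ref{basicproperties}; the only point to be careful about is to invoke the proposition in the appropriate slot, which is a purely notational matter.
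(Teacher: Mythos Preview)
Your proof is correct and follows exactly the paper's approach: the corollary is stated immediately after Proposition \ref{basicproperties} with no separate proof, as it is the direct consequence of the fact that the entropic transform maximizes the one-variable slice of $\depsN$, just as Corollary \ref{cor:increasingdualfunct} follows from Proposition \ref{htrasform} in the two-marginal case.
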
	
	The following are the same preliminary results on the $(c,\eps)$-transform that we have proved in Section \ref{sec:preliminary} for the two-marginal case. In particular Lemma \ref{tranupluscost}, Lemma \ref{simpledual} and Corollary \ref{lem:positiveintegraltransform} hold and can be proved in the same way; here we collect all the properties in a unique statement. 
	\\ \\  For the sake of simplicity we state the following properties for $\tranm{u_1,\dots,u_{N-1}}$. Everything holds, analogously, for $\tranm{u_1,\dots,u_{i-1},u_{i+1},\dots,u_N}$, for any $i\in \{1,\dots, N\}$.  
	\begin{lemma}\label{propMMtran}
		Let $\udots$, $u_i\in \LEnt_{\rho_i}$, then
		\begin{enumerate}
			\item Given $a_1,\ldots,a_{N-1}\in\R$ we have
			\[\tranm{u_1+a_1,u_2+a_2,\ldots,u_{N-1}+a_{N-1}}=\tranm{u_1,\ldots,u_{N-1}}-\sum_{i=1}^{N-1}a_i.\]
			\item  If $u_N=\tranm{u_1,\ldots,u_{N-1}}$ then one has
			\[\depsN(u_1,\ldots,u_{N-1},u_N)=\sum_{i=1}^N\int_{X_i}u_i(x_i) d\rho_i(x_i).\]
			\item If  $u_N=\tranm{u_1,\ldots,u_{N-1}}$, then for any $v$ such that \[\depsN(u_1,\ldots,u_{N-1},v)=\sum_{i=1}^{N-1}\int_{X_i}u_i(x_i) d\rho_i(x_i)+\int_{X_N} v(x_N) d\rho(x_N),\]
			\begin{equation*}
				\int u_N d\rho_N \ge \int v d\rho_N,
			\end{equation*}
		holds.
		\end{enumerate}
	\end{lemma}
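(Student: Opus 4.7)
The plan is to lift each of the three two-marginal results (Lemma \ref{tranupluscost}, Lemma \ref{simpledual}, Corollary \ref{lem:positiveintegraltransform}) directly to the multi-marginal setting, replacing the single integral $\int_X \cdots d\mu$ by the product integral $\int_{X_1\times\cdots\times X_{N-1}}\cdots d\rho^{N-1}$. None of the three assertions needs a genuinely new idea; the work is purely bookkeeping.

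For (1), I would unfold the explicit formula for $\tranm{u_1+a_1,\ldots,u_{N-1}+a_{N-1}}$, pull the constant factor $e^{(a_1+\cdots+a_{N-1})/\varepsilon}$ out of the inner integral, and then use $\log(e^s t)=s+\log t$ to separate it. This is the verbatim multi-marginal analog of Lemma \ref{tranupluscost}.

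For (2), I would substitute $u_N=\tranm{u_1,\ldots,u_{N-1}}$ into the definition of $\depsN$. The very definition of the transform gives
\[
e^{u_N(x_N)/\varepsilon}\int_{X_1\times\cdots\times X_{N-1}}e^{\frac{\sum_{i<N}u_i(x_i)-c(x_1,\ldots,x_N)}{\varepsilon}}d\rho^{N-1}=1,
\]
so by Fubini the exponential term in $\depsN$ integrates to $1$ against $\rho_N$; hence the $-\varepsilon$ cancels the $+\varepsilon$ exactly, leaving only the linear part $\sum_{i=1}^N\int u_i\,d\rho_i$.

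For (3), I would invoke the uniqueness of the maximizer in Proposition \ref{basicproperties}, which yields $\depsN(u_1,\ldots,u_{N-1},u_N)\ge \depsN(u_1,\ldots,u_{N-1},v)$. Applying part (2) to the left-hand side and the hypothesis on $v$ to the right-hand side, and then cancelling the common terms $\sum_{i<N}\int u_i\,d\rho_i$, produces the desired inequality $\int u_N\,d\rho_N\ge \int v\,d\rho_N$. I do not foresee any real obstacle; the only point requiring attention is that the Fubini step in (2) integrates out precisely the variables over which the transform was taken, and at the end one should remark that the entire argument is symmetric in the index $i\in\{1,\ldots,N\}$, so all three statements hold for $\tranm{u_1,\ldots,u_{i-1},u_{i+1},\ldots,u_N}$ equally well.
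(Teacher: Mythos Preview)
Your proposal is correct and is exactly the approach the paper itself indicates: the paper does not write out a proof for this lemma but simply remarks that Lemma~\ref{tranupluscost}, Lemma~\ref{simpledual} and Corollary~\ref{lem:positiveintegraltransform} ``hold and can be proved in the same way'' in the multi-marginal setting. Your outline of each part---factoring out $e^{\sum a_i/\varepsilon}$ for (1), substituting the transform and using Fubini so the exponential integrates to $1$ for (2), and invoking the maximizer property from Proposition~\ref{basicproperties} together with (2) for (3)---is precisely this lift.
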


	Also the bounds on the $(c,\eps)$-transforms  provided by Lemma \ref{boundabove}, Lemma \ref{cbelow} hold.
	\begin{lemma}\label{boundsMM}
		Let $\udots$, $u_i\in \LEnt_{\rho_i}$, then
		\begin{enumerate}
			\item If $\sum_{i=1}^{N-1}\int_{\R^d}u_i(x_i)d\rho_i(x_i)\ge 0$, then 
			\[\tranm{u_1,\ldots,u_{N-1}}\le \mathcal{K},\]
			where $\mathcal{K}$ is the constant in  \eqref{ipotesi2MM}.
			\item If  $\sum_{i=1}^{N-1}u_i(x_i)\le M$, for some $M\in\R$, then 
			\[\tranm{u_1,\ldots,u_{N-1}}\ge -M.\]
		\end{enumerate}
	\end{lemma}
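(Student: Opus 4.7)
The plan is to repeat, essentially verbatim, the arguments used for Lemma~\ref{boundabove} and Lemma~\ref{cbelow} in the two-marginal setting, with the product measure $\mu$ replaced by the tensor product $\rho^{N-1}:=\rho_1\otimes\cdots\otimes\rho_{N-1}$, which is still a probability measure on $X_1\times\cdots\times X_{N-1}$, so that Jensen's inequality applies without modification.

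For part (1), I would fix $x_N\in X_N$ and apply Jensen's inequality to the concave function $\log$ with respect to the probability measure $\rho^{N-1}$. This gives
\begin{align*}
\tranm{u_1,\dots,u_{N-1}}(x_N)&=-\varepsilon\log\int e^{\frac{\sum_{i=1}^{N-1}u_i(x_i)-c(x_1,\dots,x_N)}{\varepsilon}}d\rho^{N-1}\\
&\le -\int\Bigl(\sum_{i=1}^{N-1}u_i(x_i)-c(x_1,\dots,x_N)\Bigr)d\rho^{N-1}\\
&=-\sum_{i=1}^{N-1}\int u_i\,d\rho_i+\int c(x_1,\dots,x_N)\,d\rho^{N-1}(x_1,\dots,x_{N-1}).
\end{align*}
The first term is non-positive by the hypothesis, and the second term is bounded by $\mathcal{K}$ uniformly in $x_N$ thanks to assumption \eqref{ipotesi2MM}. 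Combining these two bounds yields the claim.

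For part (2), the argument is even shorter: using $c\ge 0$ and $\sum_{i=1}^{N-1}u_i(x_i)\le M$, one has $e^{\frac{\sum u_i-c}{\varepsilon}}\le e^{M/\varepsilon}$ pointwise, so that
\[
\int e^{\frac{\sum_{i=1}^{N-1}u_i(x_i)-c(x_1,\dots,x_N)}{\varepsilon}}d\rho^{N-1}\le e^{M/\varepsilon},
\]
and taking $-\varepsilon\log$ of both sides gives $\tranm{u_1,\dots,u_{N-1}}(x_N)\ge -M$.

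There is no real obstacle here: the computations from the two-marginal case transfer directly because only the probability-measure property of $\rho^{N-1}$ (for Jensen) and the uniform integral bound \eqref{ipotesi2MM} (to control $\int c\,d\rho^{N-1}$ uniformly in $x_N$) are being used. The statement for a general index $i\in\{1,\dots,N\}$ instead of $i=N$ follows by an identical computation after permuting the roles of the variables.
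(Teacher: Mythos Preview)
Your proposal is correct and is exactly the approach the paper takes: the paper does not give a separate proof of Lemma~\ref{boundsMM} but simply states that the bounds from Lemma~\ref{boundabove} and Lemma~\ref{cbelow} carry over, and your write-up supplies precisely those computations with $\mu$ replaced by the product measure $\rho^{N-1}$ and assumption~\eqref{ipotesi1} replaced by~\eqref{ipotesi2MM}.
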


	Proposition \ref{betterpotentials} below provides, for the multi-marginal case, the same bounds as Proposition \ref{improve2}. This is crucial for the bounds of the maximizing sequences and of the sequence generated by the Sinkhorn's algorithm. In this case, the proof requires some additional effort to iterate in the right way the strategy.
	
	\begin{proposition}\label{betterpotentials}
		Let $u_1,\ldots,u_N$, with $u_i\in\LEnt_{\rho_i}$, such that $\depsN(u_1,\ldots,u_N)\ge0$ and $u_N=\tranm{u_1,\ldots,u_{N-1}}$. Then there exist functions $\tilde u_1,\ldots,\tilde u_N$, with $\tilde u_i\in\LEnt_{\rho_i}$, such that
		\begin{enumerate}
			\item $\depsN(\tilde u_1,\ldots,\tilde u_N)\ge 0$:
			\item $\int \tilde u_i d\rho_i\ge 0\text{ for all }i=1,\ldots,N$:
			\item  $-(N-1)\mathcal{K}\le \tilde u_i\le \mathcal{K}$.
		\end{enumerate}

	\end{proposition}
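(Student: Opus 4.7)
The plan is to imitate the two-step improvement of Proposition~\ref{improve2}, but organised as two full sweeps over the $N$ coordinates. One sweep suffices to produce the upper bound $\le\mathcal{K}$ at every position (via Lemma~\ref{boundsMM}(1), which only requires integral control on the other coordinates), and a second sweep then delivers the lower bound $\ge-(N-1)\mathcal{K}$ (via Lemma~\ref{boundsMM}(2), which instead requires pointwise upper bounds on the other coordinates). Two cycles are needed because the lower bounds cannot be extracted until the upper bound $\le\mathcal{K}$ holds at every position simultaneously.

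The first step is to equalise the means. Since $u_N=\tranm{u_1,\dots,u_{N-1}}$, Lemma~\ref{propMMtran}(2) gives $\sum_i\int u_i\,d\rho_i=\depsN(u_1,\dots,u_N)$, so the constants
\[
a_i:=\frac{\depsN(u_1,\dots,u_N)}{N}-\int u_i\,d\rho_i
\]
satisfy $\sum_i a_i=0$. Setting $\bar u_i:=u_i+a_i$, Remark~\ref{sum} preserves the value of $\depsN$, Lemma~\ref{propMMtran}(1) preserves the transform identity $\bar u_N=\tranm{\bar u_1,\dots,\bar u_{N-1}}$, and by construction $\int\bar u_i\,d\rho_i=\depsN/N\ge 0$ for every $i$.

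For the first cycle I would define recursively, for $k=1,\dots,N$,
\[
u_k^{(1)}:=\tranm{u_1^{(1)},\dots,u_{k-1}^{(1)},\bar u_{k+1},\dots,\bar u_N}.
\]
The corollary following Proposition~\ref{basicproperties} shows that $\depsN$ is non-decreasing along such substitutions; since the tuple stays ``balanced'' (the latest computed coordinate is the transform of the others), Lemma~\ref{propMMtran}(2)--(3) applies and an induction gives $\int u_k^{(1)}\,d\rho_k\ge\int\bar u_k\,d\rho_k\ge0$. Hence at each step the integrals of the other $N-1$ coordinates have nonnegative sum, and Lemma~\ref{boundsMM}(1) yields $u_k^{(1)}\le\mathcal{K}$. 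After $k=N$ all $u_k^{(1)}$ satisfy $u_k^{(1)}\le\mathcal{K}$ and $\int u_k^{(1)}\,d\rho_k\ge0$, with $u_N^{(1)}=\tranm{u_1^{(1)},\dots,u_{N-1}^{(1)}}$.

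The second cycle then proceeds identically: for $k=1,\dots,N$ set
\[
\tilde u_k:=\tranm{\tilde u_1,\dots,\tilde u_{k-1},u_{k+1}^{(1)},\dots,u_N^{(1)}}.
\]
At each step every other coordinate is already bounded above by $\mathcal{K}$, so their pointwise sum is at most $(N-1)\mathcal{K}$ and Lemma~\ref{boundsMM}(2) produces $\tilde u_k\ge-(N-1)\mathcal{K}$. The same reasoning as in the first cycle gives $\tilde u_k\le\mathcal{K}$, $\int\tilde u_k\,d\rho_k\ge0$, and non-decrease of $\depsN$. Since $\tilde u_N=\tranm{\tilde u_1,\dots,\tilde u_{N-1}}$, Lemma~\ref{propMMtran}(2) finally gives $\depsN(\tilde u_1,\dots,\tilde u_N)=\sum_i\int\tilde u_i\,d\rho_i\ge0$, closing the argument. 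The main obstacle is keeping track of the interplay between the ``integral'' and ``pointwise'' hypotheses in Lemmas~\ref{propMMtran} and~\ref{boundsMM} throughout the substitutions; this is precisely what forces two full cycles rather than one, in contrast to the two-marginal situation of Proposition~\ref{improve2} where a single pair of transforms suffices.
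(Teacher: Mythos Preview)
Your proof is correct and follows the same overall architecture as the paper's: shift by constants summing to zero so that every $\int\bar u_i\,d\rho_i\ge0$, then perform two full sweeps of $(c,\eps)$-transforms, the first to obtain the upper bound $\le\mathcal K$ at every position, the second to extract the lower bound $\ge-(N-1)\mathcal K$ once all upper bounds are in place.

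The one point where you diverge from the paper is the mean-equalisation step. The paper uses a recursive bisection scheme, setting $I_1=\depsN$, $a_1=\tfrac{I_1}{2}-\int u_1\,d\rho_1$, then $I_2=\tfrac{I_1}{2}$, $a_2=\tfrac{I_2}{2}-\int(u_2-\tfrac{a_1}{N-1})\,d\rho_2$, and so on, distributing each successive remainder among the remaining coordinates; this produces $\int\bar u_k\,d\rho_k=I_1/2^k$ for $k<N$ and $\int\bar u_N\,d\rho_N=I_1/2^{N-1}$. Your choice $a_i=\depsN/N-\int u_i\,d\rho_i$ is simpler and more symmetric, yielding $\int\bar u_i\,d\rho_i=\depsN/N$ for every $i$ in one stroke. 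Both achieve exactly what is needed for Lemma~\ref{boundsMM}(1), and the subsequent two-sweep argument is identical in substance (the paper is in fact terser about the second sweep than you are). Your explicit tracking of why Lemma~\ref{propMMtran}(3) continues to apply along each sweep---because the most recently updated coordinate is always the transform of the rest, hence $\depsN=\sum_i\int$---is a useful clarification.
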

	
	\begin{proof}
		Let $I_1:=\depsN(u_1,\ldots,u_N)$.
		Consider $a_1=\frac{I_1}{2}-\int u_1 d\rho_1$ and set $\bar u_1:=u_1+a_1$, then
		\[\int \bar u_1 d\rho_1=\int (u_1+a_1) d\rho_1=\frac{I_1}{2}\ge 0.\]
		We observe that with this choice of $a_1$, since $\depsN(\udots)=\sum_{i=1}^N\int u_i d\rho_i$, we have 
		\[\frac{I_1}{2}=\int (u_1+a_1) d\rho_1=\sum_{i=2}^N\int u_i d\rho_i-a_1\ge 0.\]
		Now we set
		\begin{align*}
			&I_2:=\frac{I_1}{2}=\sum_{i=2}^N\int u_i d\rho_i-a_1=\sum_{i=2}^N\int(u_i-\frac{a_1}{N-1})d\rho_i, \\
			&a_2:=\frac{I_2}{2}-\int (u_2-\frac{a_1}{N-1})d\rho_2 \quad \text{and}\\
			& \bar{u}_2 :=u_2-\frac{a_1}{N-1}+a_2
		\end{align*}
		Then $\int \bar u_2 d\rho_2 =\frac{I_2}{2}\ge 0.$
		In general, we set for $k\in \{2,\ldots,N-1\}$
		\begin{align*}
			&I_k:=\frac{I_{k-1}}{2}=\sum_{i=k}^N\int u_i -\sum_{j=1}^{k-1}\frac{a_j}{N-j}d\rho_i \\
			&a_k:=\frac{I_k}{2}-\int (u_k-\sum_{j=1}^{k-1}\frac{a_j}{N-j})d\rho_k \quad \text{and}\\
			& \bar{u}_k :=u_k-\sum_{j=1}^{k-1}\frac{a_j}{N-j}+a_k.
		\end{align*}
		Then 	$\int\bar{u}_k d\rho_k = \frac{I_k}{2}\ge 0$.
		Finally we set 
		\[\bar u_N := u_N -\sum_{j=1}^{N-1}\frac{a_j}{N-j}\]
		and we observe that 
		\begin{equation*}
			\int\bar{u}_N d\rho_N=\frac{I_{N-1}}{2}\ge 0.
		\end{equation*}
		
		Now $\depsN(u_1,\ldots,u_N)=\depsN(\bar u_1,\ldots,\bar u_N)$ because the constants we have added to the functions $u_i$ sum to zero. \\
		We replace $\bar u_1$ with $\tilde u_1:=\tranm{\bar u_2,\dots, \bar u_{N}}$. Then $\tilde u_1\le \mathcal K$, by Lemma \ref{boundsMM}, and $\int \tilde u_1 d\rho_1 \ge 0$ by Lemma \ref{propMMtran}. Let $\tilde u_i:= \tranm{\tilde u_1,\dots,\tilde u_{i-1}, \bar u_{i+1}, \dots \bar u_{N}}$, for every $i=2,\dots, N$. Then $\tilde u_i\le \mathcal K$, by Lemma \ref{boundsMM}, and $\int \tilde u_i d\rho_i \ge 0$ by Lemma \ref{propMMtran}. By iterating this procedure and replacing again $\tilde u_i$ with the $c,\eps$-transform of the others, which we will still call $\tilde u_i$ for simplicity, thanks to Lemma \ref{boundsMM}, we also obtain $\tilde u_i \ge -(N-1)\mathcal K$ and we conclude the proof.		
	\end{proof}
	
	\begin{corollary}\label{goodSequence}
		There exist $( u^0_1,\ldots, u^0_N)$ such that 
		\begin{enumerate}
			\item $\depsN( u^0_1,\ldots,\tilde u^0_N)\ge 0$;
			\item $\int u^0_i\rho_i\ge 0$, for every $i=1,\dots,N$:
			\item $-(N-1)\mathcal{K}\le u^0_i\le \mathcal{K}$.
		\end{enumerate} 
	\end{corollary}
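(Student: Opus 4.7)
The plan is to apply Proposition \ref{betterpotentials} to a canonical initial tuple, in direct analogy with the way Corollary \ref{preparazione} was deduced from Proposition \ref{improve2} in the two-marginal case. Specifically, I would take
\[
u_1 \equiv \cdots \equiv u_{N-1} \equiv 0, \qquad u_N := \tranm{0,\dots,0},
\]
so that the identity $u_N = \tranm{u_1,\dots,u_{N-1}}$ required as a hypothesis of Proposition \ref{betterpotentials} holds by construction. The membership $u_N \in \LEnt_{\rho_N}$ is automatic: the same Jensen argument used in Lemma \ref{boundabove}, combined with \eqref{ipotesi2MM}, gives $u_N \le \mathcal{K}$, while $c \ge 0$ together with the multi-marginal analog of Lemma \ref{cbelow} yields $u_N \ge 0$, so $u_N$ is even bounded.

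The only remaining hypothesis to verify is the sign condition $\depsN(u_1,\dots,u_N) \ge 0$, which I would split into two steps. First, by the monotonicity of the dual under replacement of an argument by its $(c,\eps)$-transform (the corollary following Proposition \ref{basicproperties}),
\[
\depsN(0,\dots,0,\tranm{0,\dots,0}) \ge \depsN(0,\dots,0).
\]
Second, since $c \ge 0$ implies $e^{-c/\eps} \le 1$ and $\rho^N$ is a probability measure,
\[
\depsN(0,\dots,0) = -\eps \int_{\Xtimes} e^{-c(\xdots)/\eps}\, d\rho^N + \eps \ge -\eps + \eps = 0.
\]
Chaining these two inequalities gives $\depsN(u_1,\dots,u_N) \ge 0$, as required.

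Having verified all the hypotheses of Proposition \ref{betterpotentials}, I can invoke it directly to produce a tuple $(\tilde u_1,\dots,\tilde u_N)$ satisfying exactly the three properties listed in the statement; setting $(u_1^0,\dots,u_N^0) := (\tilde u_1,\dots,\tilde u_N)$ completes the argument. There is no genuine obstacle here: the corollary is essentially a packaging of Proposition \ref{betterpotentials} into a self-contained existence statement, and the whole difficulty has already been absorbed in the iterative rescaling and $(c,\eps)$-transform procedure that proves that proposition.
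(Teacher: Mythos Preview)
Your proposal is correct and follows exactly the same route as the paper: start from $u_1=\cdots=u_{N-1}\equiv 0$, set $u_N=\tranm{0,\dots,0}$, check $\depsN(u_1,\dots,u_N)\ge \depsN(0,\dots,0)=-\eps\int e^{-c/\eps}\,d\rho^N+\eps\ge 0$, and then invoke Proposition~\ref{betterpotentials}. The only difference is that you spell out the boundedness of $u_N$, which the paper leaves implicit.
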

	\begin{proof}
		Apply Proposition \ref{betterpotentials} to  $u_i\equiv 0$, for $i=1,\ldots,N-1$, and $u_N=\tranm{u_1,\dots,u_{N-1}}$. 
		Indeed 
		\begin{align*}
			\depsN(u_1,\ldots,u_N)&\ge\depsN(u_1,\ldots, u_{N-1},0)\\&=-\varepsilon\int e^{-\frac{c(\xdots)}{\varepsilon}} d\rho^N+\varepsilon \ge 0,
		\end{align*}
		since $c\ge 0$.
	\end{proof}
	As in the two-marginal case, thanks to the bounds given by Proposition \ref{betterpotentials}, the existence of maximizers follows:
	\begin{theorem} \label{th: maximizer MM} For every $\eps>0$ there exists a unique \footnote{Up to constants that sum to zero, i.e. $(u_1^\eps,\ldots,u_N^\eps)$ is a maximizer iff $(u_1^\eps+a_1,\ldots,u_N^\eps+a_N)$, with $\sum_{i=1}^N a_i=0,$ is a maximizer.} $N$-tuple $(u_1^\eps,\ldots,u_N^\eps)$ maximizing the problem \eqref{MM dual}. Moreover the plan $\bar\gamma_\eps\in \Pi(\rho_1,\dots,\rho_N)$ defined by $d\bar\gamma_\eps=e^{\frac{\bigoplus_{i=1}^N u^\eps_i-c}{\eps}}d\bigotimes_{i=1}^N\rho_i$ is the unique minimizer of \eqref{MM-EOT}.
	\end{theorem}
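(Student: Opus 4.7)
The plan is to mirror the strategy used in Theorem \ref{existsmax}, with Proposition \ref{betterpotentials} replacing the role that Proposition \ref{improve2} played in the two-marginal case. First I pick a maximizing sequence $(u_1^n,\dots,u_N^n)_n$ for \eqref{MM dual}. Since $c\ge 0$, one has $\depsN(0,\dots,0)\ge 0$ and so, without loss of generality, $\depsN(u_1^n,\dots,u_N^n)\ge 0$ for every $n$. Applying Proposition \ref{betterpotentials} term-by-term in the sequence yields a new maximizing sequence, which I continue to call $(u_1^n,\dots,u_N^n)$, satisfying the uniform bound $-(N-1)\mathcal{K}\le u_i^n\le \mathcal{K}$ for every $i$ and every $n$.

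By Banach--Alaoglu applied in each $L^2_{\rho_i}$, a subsequence converges weakly, $u_i^n\rightharpoonup u_i^\eps$ for $i=1,\dots,N$. Exactly as in the proof of Theorem \ref{existsmax}, the integrand
\[((\xdots),(\xi_1,\dots,\xi_N))\mapsto \eps\, e^{-c(\xdots)/\eps}\, e^{(\xi_1+\cdots+\xi_N)/\eps}-(\xi_1+\cdots+\xi_N)\]
is convex and lower semicontinuous in $(\xi_1,\dots,\xi_N)$ for every $(\xdots)$ and is bounded below by $c(\xdots)+\eps(1-\log\eps)\in L^1_{\rho^N}$. The semicontinuity result invoked in Theorem \ref{existsmax} then gives weak upper semicontinuity of $\depsN$, so that $(u_1^\eps,\dots,u_N^\eps)$ is a maximizer and the $L^\infty$-bounds pass to the limit. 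Uniqueness up to additive constants summing to zero follows from the strict concavity of the exponential term combined with Remark \ref{sum}. Finally, Proposition \ref{basicproperties} forces $u_i^\eps=\tranm{u_1^\eps,\dots,u_{i-1}^\eps,u_{i+1}^\eps,\dots,u_N^\eps}$ for each $i=1,\dots,N$, since otherwise one could strictly increase $\depsN$ by replacing $u_i^\eps$ with the corresponding entropic transform.

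For the second assertion, I set $d\bar\gamma_\eps:=e^{(\bigoplus_{i=1}^N u_i^\eps-c)/\eps}d\rho^N$ and verify the two ingredients required by the multi-marginal analogue of Proposition \ref{prop:ineq1Dual}: that $\bar\gamma_\eps\in\Pi(\rho_1,\dots,\rho_N)$ and that $\pepsN(\bar\gamma_\eps)=\depsN(u_1^\eps,\dots,u_N^\eps)$. The marginal constraint for index $i$ follows by unwinding the identity $u_i^\eps=\tranm{u_1^\eps,\dots,u_{i-1}^\eps,u_{i+1}^\eps,\dots,u_N^\eps}$: after exponentiating it becomes
\[\int_{\bigtimes\limits_{j\neq i} X_j} e^{(\sum_{k=1}^N u_k^\eps(x_k)-c(\xdots))/\eps}\, d\bigotimes\limits_{j\neq i}\rho_j=1,\]
valid for $\rho_i$-a.e.\ $x_i$, which is exactly the statement that the $i$-th marginal of $\bar\gamma_\eps$ is $\rho_i$; in particular $\bar\gamma_\eps$ is a probability measure with the prescribed marginals. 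The equality $\pepsN(\bar\gamma_\eps)=\depsN(u_1^\eps,\dots,u_N^\eps)$ is then a direct substitution of the explicit density into \eqref{MMhmkproblem}. The primal-dual inequality closes the loop: $\bar\gamma_\eps$ minimizes $\pepsN$, and uniqueness of the minimizer follows from the strict convexity of $\pepsN$ on $\Pi(\rho_1,\dots,\rho_N)$.

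The main obstacle I anticipate is the integrability bookkeeping needed to justify the use of the semicontinuity theorem and the Fubini step in the marginal computation: one must check that the limit $u_i^\eps$ remains in $\LEnt_{\rho_i}$ and that the $(c,\eps)$-transform identity holds in the precise $\rho_i$-a.e.\ form above. Both facts follow from the uniform $L^\infty$-bound $\|u_i^\eps\|_\infty\le (N-1)\mathcal{K}$ inherited from Proposition \ref{betterpotentials} and from hypothesis \eqref{ipotesi2MM}; every other step is a mechanical iteration of the two-marginal argument.
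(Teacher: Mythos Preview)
Your proof is correct and follows exactly the strategy the paper sketches: mirror Theorem \ref{existsmax}, with Proposition \ref{betterpotentials} playing the role of Proposition \ref{improve2}, and then repeat the argument of Corollary \ref{cor:minimizer} for the primal minimizer. One small point to tighten: Proposition \ref{betterpotentials} carries the hypothesis $u_N=\tranm{u_1,\ldots,u_{N-1}}$, so before invoking it on your maximizing sequence you should first replace $u_N^n$ by $\tranm{u_1^n,\ldots,u_{N-1}^n}$ (this only increases $\depsN$ by Proposition \ref{basicproperties}, so the sequence remains maximizing and nonnegative); the paper's terse proof glosses over the same step.
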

	\begin{proof}
		The proof is analogous to that of Theorem \ref{existsmax}: take a maximizing sequence and improve it by using Proposition \ref{betterpotentials}. Note that the semicontinuity theorem \ref{semicontinuity} trivially extends to the multi-maginal case. 
	\end{proof}
	As for the two-marginal cases, Proposition \ref{betterpotentials} and Corollary \ref{goodSequence} guarantee the existence of a uniformly bounded maximizing sequence. This bound does not depend on $\eps$ and it allows for the existence of the limit for $\eps\to 0$. The same discussion made in Section \ref{sec: lim eps} for the two marginal case holds also in this section. We summarize all those results in the following theorem.
	\begin{theorem}
		For every $\eps>0$, let $(u_1^\eps,\ldots,u_N^\eps)$ the N-tuple given by Theorem \ref{th: maximizer MM}. Then there exists a subsequence $((u_1^{\eps_k},\ldots,u_N^{\eps_k}))$ and a $N$-tuple $(u_1^\star,\dots, u_N^\star)\in L^1_{\rho_1}\times \cdots\times L^1_{\rho_N}$, such that 
		\begin{enumerate}
			\item $(u_1^{\eps_k},\ldots,u_N^{\eps_k})\overset{*}{\rightharpoonup}(u_1^\star,\dots, u_N^\star)$ in $L^\infty_{\rho_1\otimes\cdots\otimes\rho_N}$ and 
			\[\lim_{k\to \infty}\depsN(u_1^{\eps_k},\ldots,u_N^{\eps_k})=\sum_{i=1}^N\int u_i^\star d\rho_i=\max_{(u_1,\dots,u_N)\in\mathcal{B}'_N}\sum_{i=1}^N\int u_i d\rho_i,\]
			where 
			{\small	\[\mathcal{B}_N':=\{ (u_1,\dots,u_N)\in \bigtimes_{i=1}^N L^1_{\rho_i}, \ \sum_{i=1}^N u_i(x_i) \leq c(\xdots), \, \bigotimes_{i=1}^N\rho_i\text{-a.e.} (\xdots)\}.\]}
			\item For any sequence $(\gamma_{\eps_k})\subset \Pi(\rho_1.\dots, \rho_N)$ such that $\gamma_{\eps_k}\overset{\star}{\rightharpoonup}\gamma$, for some $\gamma\in\Pi(\rho_1.\dots, \rho_N) $,
			\[\liminf_{k\to\infty}\pepsN(\gamma_\eps)\ge \int \tilde c d\gamma,\]
			where $\tilde c$ is the $\bigotimes_{i=1}^N\rho_i$-essential regularization of $c$, as defined in \eqref{effetilda}. If for every $\eps>0$, we consider  the unique minimizer of \eqref{MM-EOT} $\bar\gamma_\eps$ (see Theorem \ref{th: maximizer MM}), then there exists $\gamma^\star$, such that, up to a subsequence $(\bar\gamma_{\eps_k})$, $\bar\gamma_{\eps_k}\overset{\star}{\rightharpoonup} \gamma^\star$ and \[\lim_{k\to\infty}\pepsN(\bar\gamma_{\eps_k})= \int \tilde c d\gamma^\star=\min_{\Pi(\rho_1.\dots, \rho_N)} \int \tilde c d\gamma.\]
			\item \[\max_{(u_1,\dots,u_N)\in\mathcal{B}'_N}\sum_{i=1}^N\int u_i d\rho_i=\min_{\Pi(\rho_1.\dots, \rho_N)} \int \tilde c d\gamma.\]
		\end{enumerate}
	\end{theorem}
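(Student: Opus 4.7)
The plan is to reproduce, in the multi-marginal setting, the three-step argument carried out for $N=2$ in Proposition \ref{prop-sd}, Theorem \ref{thm: liminfprimal}, Proposition \ref{prop2-ds} and Theorem \ref{thm:convergenceprimal}. All the multi-marginal ingredients needed are already in place: uniform $L^\infty$-bounds on the maximizers from Theorem \ref{th: maximizer MM} and Proposition \ref{betterpotentials}, Lemma \ref{propMMtran}(2) which expresses $\depsN$ as a pure sum of integrals on a $(c,\eps)$-transform configuration, the regularization $\tilde c$ with its properties recorded in Theorem \ref{propertiestilde}, and the semicontinuity theorem \ref{semicontinuity} noted in the proof of Theorem \ref{th: maximizer MM}.

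\textbf{Step (1): weak-$*$ limits and identification with $\max_{\mathcal{B}'_N}$.} From Theorem \ref{th: maximizer MM} the family $(u_i^\eps)_\eps$ is uniformly bounded in $L^\infty_{\rho_i}$ (by $(N-1)\mathcal{K}$), so Banach--Alaoglu gives a subsequence $(u_1^{\eps_k},\dots,u_N^{\eps_k})\overset{*}{\rightharpoonup}(u_1^\star,\dots,u_N^\star)$. Since each $u_i^{\eps_k}$ is the $(c,\eps_k)$-transform of the others, Lemma \ref{propMMtran}(2) gives $\depsN(u_1^{\eps_k},\dots,u_N^{\eps_k})=\sum_i\int u_i^{\eps_k}d\rho_i$, and weak-$*$ convergence passes this to $\sum_i\int u_i^\star d\rho_i$. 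To show $(u_1^\star,\dots,u_N^\star)\in\mathcal{B}'_N$, I argue as in Proposition \ref{prop-sd}: assume by contradiction there is $\alpha>0$ and $B_\alpha$ with $\rho^N(B_\alpha)>0$ on which $\sum_i u_i^\star(x_i)-c(\xdots)>\alpha$; testing the weak-$*$ convergence against $\mathbf{1}_{B_\alpha}\in L^1_{\rho^N}$ produces, for large $k$, a set $B_k\subset B_\alpha$ of uniformly positive measure on which $\sum_iu_i^{\eps_k}-c>\tilde\alpha$, making the exponential term in $\depsN$ blow up to $+\infty$ and forcing $\depsN(u_1^{\eps_k},\dots,u_N^{\eps_k})\to-\infty$, a contradiction. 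Maximality of $(u_1^\star,\dots,u_N^\star)$ in $\mathcal{B}'_N$ follows by comparing with any competitor $(u_1,\dots,u_N)\in\mathcal{B}'_N$ exactly as at the end of the proof of Proposition \ref{prop-sd}: the exponential integrand is dominated by $1$, so the limit of $\depsN(u_1,\dots,u_N)$ is $\sum_i\int u_i d\rho_i$, which is bounded above by $\depsN(u_1^{\eps_k},\dots,u_N^{\eps_k})\to\sum_i\int u_i^\star d\rho_i$.

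\textbf{Step (2): liminf for the primal and convergence of minimizers.} The multi-marginal version of Theorem \ref{thm: liminfprimal} is obtained verbatim from the two-marginal proof: one restricts to $\gamma_\eps\ll\rho^N$, replaces $c$ by $\tilde c$ in the cost integral thanks to Theorem \ref{propertiestilde} (since $c=\tilde c$ $\rho^N$-a.e.), drops the nonnegative entropy term, and uses the lower semicontinuity of $\tilde c$ combined with weak-$*$ convergence $\gamma_{\eps_k}\overset{*}{\rightharpoonup}\gamma$. For the sequence $(\bar\gamma_\eps)$ of minimizers of $\pepsN$ from Theorem \ref{th: maximizer MM}, tightness is automatic from the fixed marginals $\rho_1,\dots,\rho_N$ (Polish), so up to a subsequence $\bar\gamma_{\eps_k}\overset{*}{\rightharpoonup}\gamma^\star\in\Pi(\rho_1,\dots,\rho_N)$. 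By Theorem \ref{th: maximizer MM} (the multi-marginal counterpart of Corollary \ref{cor:minimizer}) we have $\pepsN(\bar\gamma_{\eps_k})=\depsN(u_1^{\eps_k},\dots,u_N^{\eps_k})$, and combining with Step (1), the liminf inequality just proved, and Proposition \ref{prop2-ds} (whose proof adapts directly to $N$ marginals by taking $(\bar x_1,\dots,\bar x_N)\in\spt\rho^N=\spt\rho_1\times\cdots\times\spt\rho_N$ and choosing points in the intersection of $\{\tilde c+\alpha>c\}$ with a small product ball and with the full-measure set $\{\sum_iu_i\le c\}$) yields the chain
\begin{equation*}
\min_{\Pi(\rho_1,\dots,\rho_N)}\!\int\tilde c\,d\gamma\;\le\;\int\tilde c\,d\gamma^\star\;\le\;\liminf_k\pepsN(\bar\gamma_{\eps_k})\;=\;\lim_k\sum_i\!\int u_i^{\eps_k}d\rho_i\;=\;\sum_i\!\int u_i^\star d\rho_i\;\le\;\min_{\Pi(\rho_1,\dots,\rho_N)}\!\int\tilde c\,d\gamma.
\end{equation*}

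\textbf{Step (3): duality.} All the inequalities in the above chain are therefore equalities, which simultaneously gives the duality statement (3), the equality $\lim_k\pepsN(\bar\gamma_{\eps_k})=\int\tilde c\,d\gamma^\star=\min\int\tilde c\,d\gamma$ claimed in (2), and the characterization in (1). The main obstacle I expect is the contradiction argument in Step (1): one must rule out any exceptional set for $\sum_iu_i^\star>c$ even though the weak-$*$ convergence gives no pointwise control. The trick used for $N=2$ (splitting $B_\alpha$ into $B_k$ and $B_\alpha\setminus B_k$, using the uniform $L^\infty$ bound on $\sum_iu_i^{\eps_k}-c$ over $B_k$ and the threshold $\tilde\alpha/2$ on the complement) transfers directly, since the $L^\infty$-bound on $u_i^{\eps_k}$ is uniform in $k$ by Theorem \ref{th: maximizer MM}. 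Finally, the extension of Proposition \ref{prop2-ds} only uses density of bounded continuous functions in $L^1$ and the product structure of the support of $\rho^N$, both of which hold in the multi-marginal setting.
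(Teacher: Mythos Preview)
Your proposal is correct and follows exactly the approach the paper indicates: the paper does not give a separate proof of this theorem but simply remarks that ``the same discussion made in Section~\ref{sec: lim eps} for the two marginal case holds also in this section,'' and you have faithfully spelled out that discussion (Proposition~\ref{prop-sd}, Proposition~\ref{prop2-ds}, Theorem~\ref{thm: liminfprimal}, Theorem~\ref{thm:convergenceprimal}) in the multi-marginal setting, using the uniform bounds from Proposition~\ref{betterpotentials} and the form of $\depsN$ from Lemma~\ref{propMMtran}(2). The only minor wording issue is in your maximality argument in Step~(1): the inequality goes the other way (maximality of $(u_i^{\eps_k})$ gives $\depsN(u_1^{\eps_k},\dots,u_N^{\eps_k})\ge\depsN(u_1,\dots,u_N)$, then pass to the limit), but the substance is right.
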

	{\bf The multi-marginal Sinkhorn algorithm. }	Let $(u^0_1,\dots, u^0_N)$ be given by Corollary \ref{goodSequence}. For $n\ge 0$,  we define the sequence $(u^n_1,\dots,u^n_N)$, 
	\begin{equation}\label{eq:sequencesinkhornMM}
		\begin{cases}
			u^{n+1}_{1}:= \tranm{u_2^{n},\dots,u_N^{n}},\\
			u^{n+1}_{2}:= \tranm{u_1^{n+1},u_3^n\dots,u_N^{n}}\\
			\vdots \\
			u^{n+1}_{N}:= \tranm{u_1^{n+1}, \dots, u_{N-1}^{n+1}}.
		\end{cases}
	\end{equation}
	
	\begin{theorem}\label{thm:convergenceSinkhornMM}
		Let  $c:\Xtimes\to [0,+\infty]$ satisfy the assumption \eqref{ipotesi2MM}. Then the sequence \eqref{eq:sequencesinkhornMM} converges uniformly on compact sets to the unique maximizer  $(\bar u_1,\dots, \bar u_N)$ of $\deps^N$.
	\end{theorem}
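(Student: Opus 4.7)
The plan is to transfer the proof of Theorem \ref{thm:convergenceSinkhorn} to the cyclic $N$-step update. The three ingredients will be uniform $L^\infty$ bounds on the Sinkhorn iterates, local equicontinuity in the sense of the multi-marginal analogue of Proposition \ref{prop:regolaritactransgen}, and the monotonicity of $\depsN$ along the sequence combined with the fact that each elementary update $T_k$ that replaces the $k$-th slot by $\tranm{u_1,\dots,\hat u_k,\dots,u_N}$ strictly increases $\depsN$ unless that slot already satisfies the transform equation.

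I would first show by induction on $n$ that $-(N-1)\mathcal{K}\le u_i^n\le \mathcal{K}$ and $\int u_i^n\,d\rho_i\ge 0$ for every $i$ and $n$. The base case is Corollary \ref{goodSequence}. In the inductive step, when we update the $k$-th slot to $u_k^{n+1}=\tranm{u_1^{n+1},\dots,u_{k-1}^{n+1},u_{k+1}^n,\dots,u_N^n}$, the inductive hypothesis gives $\sum_{j\ne k}\int u_j^{?}\,d\rho_j\ge 0$ and $\sum_{j\ne k}u_j^{?}\le (N-1)\mathcal{K}$, so Lemma \ref{boundsMM} yields $-(N-1)\mathcal{K}\le u_k^{n+1}\le \mathcal{K}$. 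For the integral, Lemma \ref{propMMtran}(2) applied both before and after the slot-$k$ update (in both tuples one of the slots is the entropic transform of the others) expresses $\depsN$ as the plain sum of integrals, so the non-decreasing character of $\depsN$ under the elementary update gives $\int u_k^{n+1}\,d\rho_k\ge \int u_k^n\,d\rho_k\ge 0$.

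Next I would invoke the multi-marginal analogue of Proposition \ref{prop:regolaritactransgen}, whose proof carries over unchanged: for every $\delta>0$ and every $i$ there exists a compact $K_i^\delta\subset X_i$ with $\rho_i(K_i^\delta)>1-\delta$ on which every $(c,\eps)$-transform of a tuple bounded in sup-norm by $(N-1)\mathcal{K}$ is equicontinuous, with a modulus depending only on $\delta,\eps,\mathcal{K}$ and $c$. Therefore $\{u_i^n\}_n$ is uniformly bounded and equicontinuous on each $K_i^{\delta_m}$, and a standard Arzel\`a--Ascoli plus diagonal argument along $\delta_m\to 0$ produces a subsequence $n_k$ and limits $\bar u_1,\dots,\bar u_N$ with $\|\bar u_i\|_\infty\le (N-1)\mathcal{K}$ such that $u_i^{n_k}\to \bar u_i$ uniformly on every $K_i^{\delta_m}$, hence $\rho_i$-a.e. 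Since $e^{(\sum_j u_j^{n_k}-c)/\eps}$ is dominated by $e^{N(N-1)\mathcal{K}/\eps}$, dominated convergence gives $\depsN(u_1^{n_k},\dots,u_N^{n_k})\to \depsN(\bar u_1,\dots,\bar u_N)$, and the analogous convergence holds for each of the $N$ intermediate tuples appearing along the $n_k$-th sweep after a further sub-extraction.

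Finally I identify the limit with the maximizer. By Proposition \ref{basicproperties}, each elementary update satisfies $\depsN(T_k(u))\ge \depsN(u)$, with equality if and only if $u_k=\tranm{u_1,\dots,\hat u_k,\dots,u_N}$. Since $\depsN(u^n)$ is monotone and bounded above by $\max\depsN$, the consecutive increments vanish; passing to the limit in each of the $N$ elementary steps of a Sinkhorn sweep along the extracted subsequence yields $\bar u_k=\tranm{\bar u_1,\dots,\hat{\bar u}_k,\dots,\bar u_N}$ for every $k=1,\dots,N$. These fixed-point equations characterise the unique maximizer of $\depsN$ (up to the additive constants of Remark \ref{sum}), hence $(\bar u_1,\dots,\bar u_N)$ coincides with the pair given by Theorem \ref{th: maximizer MM}, and by uniqueness the whole Sinkhorn sequence converges uniformly on compact sets to this maximizer. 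The main obstacle will be this last step: because a single Sinkhorn iteration touches the slots one at a time, deriving the fixed-point equation for \emph{every} slot requires extracting subsequences at each of the $N$ intermediate stages and carefully chaining the monotonicity inequalities, rather than simply passing to the limit in a single transform equation as in the two-marginal case.
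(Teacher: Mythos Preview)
Your proposal is correct and takes essentially the same approach as the paper. The paper's own proof merely refers back to Theorem~\ref{thm:convergenceSinkhorn}, noting that Proposition~\ref{prop:regolaritactransgen} extends naturally to the multi-marginal setting and that the uniform bounds on the sequence come from Proposition~\ref{betterpotentials}; your inductive derivation of the $L^\infty$ bounds via Lemma~\ref{boundsMM} and Lemma~\ref{propMMtran}, and the careful chaining through the $N$ intermediate tuples to obtain the fixed-point equation in every slot, are precisely the details the paper leaves to the reader.
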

	\begin{proof}
		The proof is the same as the one of Theorem \ref{thm:convergenceSinkhorn}. Indeed, Proposition \ref{prop:regolaritactransgen} can be extended in a natural way to the multi-marginal setting. The uniform bounds on the sequence come from Proposition \ref{betterpotentials}.
	\end{proof}
	
	\begin{remark}
		As well as Proposition \ref{prop:regolaritactransgen}, also Proposition \ref{prop:reularity1} and Proposition \ref{prop:lipschitz} apply to the multi-marginal case.
	\end{remark}

	\appendix
	
	\section{Properties of the dual functional $\deps$.}
	\begin{proposition}\label{prop:ineq1Dual}
		Let $\peps$ as defined in \eqref{hmkproblem} and $\deps$ as the one in \eqref{dualfunctional}, then
		\[\peps(\gamma)\ge \deps(u,v), \quad \text{for any }\gamma\in\Pi(\mu,\nu), u,v \in L^1_\mu\times L^1_\nu. \]
	\end{proposition}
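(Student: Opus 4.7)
The strategy is the classical Fenchel--Young duality argument, applied pointwise and then integrated against $\mu\otimes\nu$.

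First, I would dispose of the degenerate cases: if $\gamma\not\ll\mu\otimes\nu$, then $\mathcal{H}(\gamma|\mu\otimes\nu)=+\infty$ and the inequality is trivial; if $(u,v)\notin \LEnt_\mu\times \LEnt_\nu$, then the exponential integral in $\deps(u,v)$ diverges and $\deps(u,v)=-\infty$, so again there is nothing to prove. Thus I may restrict to $\gamma=\rho\cdot(\mu\otimes\nu)$ with density $\rho\ge 0$ satisfying $\int\rho\,d\mu\otimes\nu=1$, and $(u,v)\in\LEnt_\mu\times\LEnt_\nu$.

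The core ingredient is the pointwise Fenchel--Young inequality for the Legendre pair $h(a)=a\log a-a$ on $[0,\infty)$ (with $h(0)=0$) and its conjugate $h^*(t)=e^t$, which reads
\[
a\log a + e^t \;\ge\; at + a, \qquad a\ge 0,\ t\in\R.
\]
Applying this with $a=\rho(x,y)$ and $t=(u(x)+v(y)-c(x,y))/\eps$, multiplying by $\eps$ and rearranging, I obtain
\[
c\rho + \eps\,\rho\log\rho + \eps\,e^{(u+v-c)/\eps} \;\ge\; (u+v)\rho + \eps\,\rho.
\]
Integrating both sides against $\mu\otimes\nu$ and using the marginal identities $\int u(x)\rho(x,y)\,d\mu\otimes\nu=\int u\,d\mu$ and $\int v(y)\rho(x,y)\,d\mu\otimes\nu=\int v\,d\nu$ together with $\int\rho\,d\mu\otimes\nu=1$, the inequality rearranges to exactly $\peps(\gamma)\ge \deps(u,v)$, with the constants coming from the $\mathcal{H}$ normalization and the $+\eps$ in $\deps$ cancelling against $\int \rho\,d\mu\otimes\nu = 1$.

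The only delicate point is the bookkeeping of integrability, ensuring the positive and negative parts of $\rho\log\rho$ and of $(u+v)\rho$ can be separated so that the rearrangement above is legitimate. This uses the elementary bound $(a\log a)^{-}\le 2/e$ together with $\int\rho\,d\mu\otimes\nu=1$ for the entropy term; the hypothesis $u\in L^1_\mu$, $v\in L^1_\nu$ and the marginal projection to interpret $\int(u+v)\rho\,d\mu\otimes\nu$ as $\int u\,d\mu+\int v\,d\nu$; and the membership $(u,v)\in\LEnt_\mu\times\LEnt_\nu$ to guarantee that $\int e^{(u+v-c)/\eps}\,d\mu\otimes\nu<+\infty$. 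Once this splitting is in place, the inequality is immediate from the pointwise bound, with no further analytical obstacle.
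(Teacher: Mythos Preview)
Your proposal is correct and follows essentially the same route as the paper: both arguments reduce to the pointwise Fenchel--Young inequality for the Legendre pair associated with $a\mapsto a\log a$, integrated against $\mu\otimes\nu$. The paper packages the convex function as $f(\alpha)=c\alpha+\eps\alpha\log\alpha-\eps\alpha$ with conjugate $f^*(\beta)=\eps e^{(\beta-c)/\eps}$, while you use the normalized pair $h(a)=a\log a-a$, $h^*(t)=e^t$ and then multiply by $\eps$ and shift by $c\rho$; these are the same inequality, and your additional bookkeeping on the degenerate cases and integrability is a welcome clarification that the paper omits.
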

	\begin{proof}
		Let $f:\R\to \R \cup \{+\infty\}$ be the convex function
		\[ f(\alpha)=\left\{ \begin{array}{ll}
			c\alpha +\eps \alpha\log\alpha - \eps\alpha & \mbox{if} \ 0\leq \alpha, \\
			+\infty & \mbox{otherwise.}
		\end{array} \right.\]
		The Legendre transform is 
		\[ f^*(\beta)= \eps e^{\frac{\beta-c}{\eps}}.\]
		Then the {\it generalised} Young inequality gives, for all $\alpha, \beta \in \R$, 
		\[ 	c\alpha +\eps \alpha\log\alpha - \eps\alpha + e^{\frac{\beta-c}{\eps}} \geq \alpha\beta. \]
		For $\gamma \in \Pi (\mu,\nu)$ we denote by $\gamma (x,y)$ the density of $\gamma$ with respect to 
		$\mu\otimes\nu$. If $u \in L^1_\mu$ and $\nu \in L^1_\nu$ we apply the Young inequality above with 
		$c=c(x,y)$, $\alpha=\gamma(x,y)$ and $\beta= u(x)+v(y)$ and, recalling that $\int d\gamma=1$, 
		\[\int c(x,y)d\gamma +\eps \mathcal{H}(\gamma | \mu\otimes\nu) \geq \int u(x) d\mu +\int v(y)  d\nu -\eps \int e^{\frac{u(x)+v(y)-c(x,y)}{\eps}} d\mu\otimes \nu +\eps.  \]
	\end{proof}
	
	Consider the set 
	\begin{equation}\label{Lexp}
		\LEnt_\mu :=\{u \in L^1_\mu \ : \ \int e^\frac{u(x)}{\eps}d\mu <+\infty\}
	\end{equation}
	then the following holds
	\begin{proposition}\label{domain}
		Let $u \in L_\mu^1$, $v\in L^1_\nu$. Then $\deps(u,v)>-\infty$ if and only if $u\in \LEnt_\mu$ and $v \in \LEnt_\nu$. 
	\end{proposition}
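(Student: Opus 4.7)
The statement is equivalent to the finiteness of the exponential term $\int e^{(u+v-c)/\eps} d\mu\otimes\nu$, since $u \in L^1_\mu$ and $v \in L^1_\nu$ make the linear terms automatically finite. So the plan reduces to showing that, under $u\in L^1_\mu$, $v\in L^1_\nu$, the condition $\int e^{(u+v-c)/\eps}\,d\mu\otimes\nu < +\infty$ is equivalent to $u\in\LEnt_\mu$ and $v\in\LEnt_\nu$.

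The easy direction ($\Leftarrow$) uses $c\ge 0$: one has pointwise $e^{(u(x)+v(y)-c(x,y))/\eps}\le e^{u(x)/\eps}e^{v(y)/\eps}$, and by Tonelli
\[
\int e^{(u+v-c)/\eps}\,d\mu\otimes\nu \le \Bigl(\int e^{u/\eps}\,d\mu\Bigr)\Bigl(\int e^{v/\eps}\,d\nu\Bigr) <+\infty.
\]

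For the hard direction ($\Rightarrow$), I would use Fubini together with the integrability hypothesis \eqref{ipotesi1} to extract a uniform lower bound on the partial integral in $y$. Specifically, applying Jensen's inequality to the convex function $t\mapsto e^t$ with the probability measure $\nu$, for every $x\in X$
\[
\int_Y e^{(v(y)-c(x,y))/\eps}\,d\nu(y) \ge \exp\!\Bigl(\tfrac{1}{\eps}\int_Y v\,d\nu - \tfrac{1}{\eps}\int_Y c(x,y)\,d\nu(y)\Bigr) \ge \exp\!\Bigl(\tfrac{1}{\eps}\int v\,d\nu - \tfrac{\kcal}{\eps}\Bigr)=:C>0,
\]
where the last inequality uses \eqref{ipotesi1}. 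Since $v\in L^1_\nu$, the constant $C$ is strictly positive and finite. Then Fubini gives
\[
+\infty > \int e^{(u+v-c)/\eps}\,d\mu\otimes\nu = \int_X e^{u(x)/\eps}\Bigl(\int_Y e^{(v(y)-c(x,y))/\eps}\,d\nu(y)\Bigr)d\mu(x) \ge C\int_X e^{u(x)/\eps}\,d\mu,
\]
which yields $u\in\LEnt_\mu$. Swapping the roles of $\mu,\nu$ and $u,v$ gives $v\in\LEnt_\nu$ by the same argument, using the second part of \eqref{ipotesi1}.

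The main obstacle, as often in this kind of argument, is ensuring that the integrand $\int_Y e^{(v-c(x,\cdot))/\eps}\,d\nu$ does not degenerate to zero on a set of positive $\mu$-measure; this is exactly where assumption \eqref{ipotesi1} is essential, since it forces $\int_Y c(x,y)\,d\nu(y)$ to be \emph{uniformly} bounded in $x$ (not just $\mu$-a.e.\ finite), which in turn produces the uniform positive lower bound $C$. Without \eqref{ipotesi1} one could only get that $\int_Y e^{(v-c(x,\cdot))/\eps}\,d\nu > 0$ pointwise, and the above factorization step would break down.
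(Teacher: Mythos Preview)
Your proof is correct and follows essentially the same route as the paper: both directions are handled identically, with the key step being Jensen's inequality applied to the inner integral $\int_Y e^{(v-c(x,\cdot))/\eps}\,d\nu$ together with assumption \eqref{ipotesi1} to obtain a positive lower bound. Your write-up is in fact slightly more careful than the paper's, since you make explicit that \eqref{ipotesi1} yields a \emph{uniform} constant $C>0$ (the paper only writes ``$>0$'' and leaves the uniformity implicit), which is exactly what is needed to pass from finiteness of the iterated integral to $\int e^{u/\eps}\,d\mu<\infty$.
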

	\begin{proof} The fact that $\deps(u,v)>-\infty$ for every $u\in \LEnt_\mu$ and $v \in \LEnt_\nu$ is obvious by definition of $\LEnt$. 
		Let us assume $\deps(u,v)>-\infty$. Then 
		\[\int e^\frac{u(x)}{\eps}(\int e^\frac{v(y)-c(x,y)}{\eps} d\nu(y))d\mu(x)<+\infty.\]
		Now
		\[\int_Ye^\frac{v(y)-c(x,y)}{\eps}d\nu(y)\ge e^{\int_Y\frac{v(y)-c(x,y)}{\eps}d\nu(y)}>0,\]
		thanks to the integrability assumptions on $v$ and $c$ (see \eqref{ipotesi1}). Therefore
		\[\int_X e^\frac{u(x)}{\eps}d \mu(x)<+\infty.\]
	\end{proof}
	\begin{corollary}
		\[\sup_{(u,v)\in L^1_\mu \times L^2_\nu }\deps(u,v)=\sup_{(u,v)\in \LEnt_\mu \times \LEnt_\nu } \deps (u,v).\]
	\end{corollary}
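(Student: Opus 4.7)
The plan is to prove the two inequalities separately, and both come almost for free from Proposition \ref{domain}.

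For the inequality $\sup_{(u,v)\in \LEnt_\mu \times \LEnt_\nu}\deps(u,v) \le \sup_{(u,v)\in L^1_\mu\times L^1_\nu}\deps(u,v)$, I would simply observe that $\LEnt_\mu \subseteq L^1_\mu$ and $\LEnt_\nu \subseteq L^1_\nu$ by the very definition of $\LEnt$ given in \eqref{Lexp}, so we are taking the supremum of the same functional over a smaller set.

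For the reverse inequality, the idea is that any pair $(u,v)\in L^1_\mu\times L^1_\nu$ whose contribution to the left-hand supremum is not $-\infty$ already lies in $\LEnt_\mu\times\LEnt_\nu$, thanks to Proposition \ref{domain}. To make this rigorous I would first note that the supremum on the left is $>-\infty$: for instance the pair $(u,v)\equiv(0,0)$ gives $\deps(0,0)=-\eps\int e^{-c/\eps}d\mu\otimes\nu+\eps\ge\eps(1-1)=0$, since $c\ge0$ implies $e^{-c/\eps}\le1$, and the integral is finite because $\mu\otimes\nu$ is a probability measure. Hence restricting the supremum on the left to those pairs with $\deps(u,v)>-\infty$ does not change its value:
\begin{equation*}
\sup_{(u,v)\in L^1_\mu\times L^1_\nu}\deps(u,v) = \sup_{\substack{(u,v)\in L^1_\mu\times L^1_\nu \\ \deps(u,v)>-\infty}}\deps(u,v).
\end{equation*}
By Proposition \ref{domain}, the set in the last supremum is precisely $\LEnt_\mu\times\LEnt_\nu$, which gives the reverse inequality and hence the claim.

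There is essentially no obstacle here: the statement is a clean bookkeeping consequence of the domain characterization already established in Proposition \ref{domain}, once we have verified that the dual functional is not identically $-\infty$ on $L^1_\mu\times L^1_\nu$. The only mild subtlety is spotting the apparent typo $L^2_\nu$ in the statement; the natural reading (and the only one consistent with Proposition \ref{domain}) is $L^1_\nu$, and the argument above works verbatim in that formulation.
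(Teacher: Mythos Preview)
Your argument is correct and is precisely the intended one: the paper leaves this corollary unproved because it is an immediate consequence of Proposition~\ref{domain}, and your two-inequality derivation from that proposition (together with the observation that $\deps(0,0)\ge 0$ so the supremum is not $-\infty$) is exactly the natural way to spell it out. Your reading of $L^2_\nu$ as a typo for $L^1_\nu$ is also correct.
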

	The functional $\deps:L^1_\mu \times L^1_\nu \to \R \cup \{-\infty\}$ is strictly concave and, for the previous inequality, bounded above. It follows that $\deps$ admits at most one maximizer. 
	Moreover, from the strict concavity it follows also that, if a couple $ (\bar u, \bar v)$ satisfies
	\[\deps (\bar u, \bar v) =\max_v \deps (\bar u, v), \]
	and 
	\[\deps (\bar u, \bar v) =\max_u \deps (u, \bar v), \]
	then it is the maximizing couple. 
	
	\begin{definition}\label{def:trasfentrop}
		Let $u\in \LEnt_\mu$. We define the $(c,\eps)$-transform 
		of $u$ as  
		\begin{equation}\label{eq:trasfentropmax}
			\tran{u}:= \underset{v\in L^1_\nu} \argmax \int_X u d\mu +\int_Y v d\nu -\eps \int_{X\times Y} e^\frac{u(x)+v(y)-c(x,y)}{\eps} d\mu\otimes \nu +\eps.
		\end{equation}
		In the same way, for every $v\in \LEnt_\nu$ we define the $(c,\eps)$-transform of $v$ as the function $v^{c,\eps}:X\to \R$ given by 
		\begin{equation}\label{eq:trasfentropmaxv}
			\tran{v}:=\underset{u\in L^1 _\mu}\argmax \int_X u d\mu +\int_Y v d\nu -\eps \int_{X\times Y} e^\frac{u(x)+v(y)-c(x,y)}{\eps} d\mu\otimes \nu +\eps.
		\end{equation}
	\end{definition}
	\begin{proposition}\label{prop:aboututransf}
		The function $\tran{u}$, defined in \eqref{eq:trasfentropmax} exists and is unique and 
		\begin{equation*}
			u^{c,\eps}(y):=-\eps\log\int_{X} e^{\frac{u(x)-c(x,y)}{\eps}}d\mu(x), \quad \text{for every} \ y\in Y.
		\end{equation*}
	\end{proposition}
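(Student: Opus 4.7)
The plan is to propose the explicit candidate
\[
\bar v(y) := -\eps \log \int_X e^{\frac{u(x)-c(x,y)}{\eps}}\, d\mu(x),
\]
verify that it lies in $\LEnt_\nu$, and then show by a direct tangent-line inequality that it achieves the supremum in \eqref{eq:trasfentropmax} and is the only function that does so. The key observation, which drives everything, is the identity $\int_X e^{(u(x)+\bar v(y)-c(x,y))/\eps}\, d\mu(x)=1$ for every $y$, built into the definition of $\bar v$.

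To handle integrability, I would derive two one-sided bounds. For the upper bound, Jensen's inequality applied to the concave logarithm gives
\[
\bar v(y)\le -\int_X u\, d\mu + \int_X c(x,y)\, d\mu(x)\le -\int_X u\, d\mu + \kcal
\]
by assumption \eqref{ipotesi1}. For the lower bound, using $c\ge 0$ and $u\in\LEnt_\mu$,
\[
\int_X e^{\frac{u(x)-c(x,y)}{\eps}}\, d\mu(x)\le \int_X e^{\frac{u(x)}{\eps}}\, d\mu(x) =: M<+\infty,
\]
so $\bar v(y)\ge -\eps\log M$. Hence $\bar v\in L^\infty_\nu\subset \LEnt_\nu$ and the functional $\deps(u,\bar v)$ is finite.

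The core estimate uses the tangent-line inequality for the strictly convex map $t\mapsto e^{t/\eps}$:
\[
e^{a/\eps}\ge e^{b/\eps} + \tfrac{1}{\eps}e^{b/\eps}(a-b),
\]
with equality if and only if $a=b$. Apply this pointwise with $a=u(x)+v(y)-c(x,y)$ and $b=u(x)+\bar v(y)-c(x,y)$ for an arbitrary competitor $v\in\LEnt_\nu$, and integrate against $\mu\otimes\nu$. Using the defining identity $\int_X e^{(u+\bar v-c)/\eps}\, d\mu=1$ twice, one obtains
\[
\int_{X\times Y} e^{\frac{u+v-c}{\eps}}\, d\mu\otimes d\nu \;\ge\; 1 + \tfrac{1}{\eps}\int_Y (v-\bar v)\, d\nu,
\]
which rearranges precisely to $\deps(u,v)\le \deps(u,\bar v)$. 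The strict convexity of the exponential upgrades the inequality to be strict unless $v=\bar v$ $\nu$-almost everywhere, yielding uniqueness of the argmax. Plugging $\bar v$ back into $\deps$ and again using $\int_X e^{(u+\bar v-c)/\eps}d\mu=1$ shows the maximal value equals $\int u\, d\mu+\int \bar v\, d\nu$, consistent with Lemma \ref{simpledual}.

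The main obstacle is essentially bookkeeping rather than conceptual: one must justify Fubini in the integration of the tangent-line inequality and confirm that all terms make sense in $L^1$. Both are taken care of by the $L^\infty$ bounds on $\bar v$ together with the summability hypothesis \eqref{ipotesi1}, so no compactness argument or fixed-point machinery is needed — the formula is extracted directly from the first-order optimality condition and validated by the convexity inequality.
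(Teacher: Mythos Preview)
Your proof is correct and rests on the same idea as the paper's: strict concavity of $v\mapsto\deps(u,v)$ together with the first-order condition singles out the explicit formula. The paper applies Fubini and then, for each fixed $y$, maximizes the one-variable concave map $t\mapsto t-\eps\, b(y)\, e^{t/\eps}$ with $b(y)=\int_X e^{(u-c)/\eps}d\mu$, reading off $t=-\eps\log b(y)$; you instead integrate the tangent-line inequality for $e^{t/\eps}$ against $\mu\otimes\nu$ and use the defining identity $\int_X e^{(u+\bar v-c)/\eps}d\mu\equiv 1$ to collapse the right-hand side. The two arguments are equivalent --- your inequality is exactly the integrated version of the paper's pointwise concavity bound --- but your integrability step is marginally sharper: you obtain $\bar v\in L^\infty_\nu$ (via Jensen for the upper bound and $c\ge 0$, $u\in\LEnt_\mu$ for the lower), while the paper only checks $\bar v\in\LEnt_\nu$.
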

	\begin{proof} Let $(u,v)\in L_\mu^{exp,\eps}(X\times L^{exp,\eps}_\nu(Y)$, then by Fubini's theorem
		\begin{align*}
			&\int_X u d\mu +\int_Y v d\nu -\eps \int_{X\times Y} e^\frac{u(x)+v(y)-c(x,y)}{\eps} d\mu\otimes \nu +\eps=\\&\int_Y\int_X u(x) d\mu(x)+v(y)-\eps\left(\int_X e^{\frac{u(x)-c(x,y)}{\eps}}d\mu(x) \right)e^{\frac{v(y)}{\eps}}d\nu(y)+\eps.
		\end{align*}
		Let us consider the strictly concave function $g(t)=a+t-\eps b e^{\frac t \eps}$. Since $\lim_{t\to\pm\infty}g(t)=-\infty$, it admits a unique maximum, which is the solution of 
		\begin{equation*}
			g'(t)=1-be^{\frac t \eps}=0,
		\end{equation*}
		i.e. 
		\begin{equation*}
			t=-\eps \log (b).
		\end{equation*}
		So, for every fixed $y$, the function $\int_X u(x) d\mu(x)+t-\eps\left(\int_X e^{\frac{u(x)-c(x,y)}{\eps}}d\mu(x) \right)e^{\frac{t}{\eps}}$ is maximized by 
		\begin{equation*}
			t=-\eps \log \int_{X} e^{\frac{u(x)-c(x,y)}{\eps}}d\mu(x)=:v(y).
		\end{equation*}
		We conclude by observing that $v\in L^{exp,\eps}_\nu (Y)$. Indeed  
		\begin{align*}
			\int _Y e^{\frac{v(y)}{\eps}}d\nu(y)&=\int _Y e^{-\log\int_{X} e^{\frac{u(x)-c(x,y)}{\eps}}d\mu(x)}d\nu(y)\\&=\int_Y \left(\int_{X} e^{u(x)-c(x,y)}d\mu(x)\right)^{-1}d \nu(y)<+\infty,
		\end{align*}
		because, by Jensen inequality 
		\begin{align*}
			\int_{X} e^{u(x)-c(x,y)}d\mu(x)\geq e^{\int_{X} {u(x)-c(x,y)}d\mu(x)},
		\end{align*}
	and by the fact that $u\in L^1(\mu)$ and by the assumption \eqref{ipotesi1},
	\[ \int_{X} {u(x)-c(x,y)}d\mu(x) \geq - \|u\|_{L^1_\mu} - \kcal .\]
	\end{proof}
	
	\section{A semicontinuity theorem}
	\begin{theorem}\label{semicontinuity}
		Let $(\Omega, \Sigma, \lambda)$ be a measure space with $\lambda$ $\sigma$-finite on $\Omega$ and let $p\in (1,+\infty)$. 
		Let 
		\[f:\Omega \times \R^d \to \overline{R},\]
		be $\Sigma \times \mathcal B(\R^d)$-measurable and such that 
		\begin{enumerate}
			\item[(i)] For $\lambda$-a.e. $\omega \in \Omega$, $\xi \to f(\omega, \xi)$ is convex and lower semi-continuous;
			\item[(ii)]	for a function $a \in L^1_\lambda$ and a number $b\ge0$  
			\[f(\omega, \xi) \geq -a(\omega)-b|\xi|^p.\] 
		\end{enumerate}	
		Then 
		\[ u\mapsto \int _\Omega f(\omega,u(\omega))d \lambda, \] 
		is lower semi-continuous in $L^p_\lambda (\Omega,\R^d)$ with respect to the weak topology. 
		
	\end{theorem}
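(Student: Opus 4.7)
The plan is the classical two-step argument for convex integral functionals: first establish strong lower semi-continuity via Fatou, then upgrade to weak lower semi-continuity via convexity and Mazur's lemma.

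As a preliminary, for each $u\in L^p_\lambda(\Omega,\R^d)$ the composition $\omega\mapsto f(\omega,u(\omega))$ is $\Sigma$-measurable, since $f$ is jointly measurable and $(\mathrm{id},u):\Omega\to \Omega\times\R^d$ is measurable. Hypothesis (ii) rewrites as $f(\omega,u(\omega))+a(\omega)+b|u(\omega)|^p\ge 0$, with the lower bound in $L^1_\lambda$. Hence $F(u):=\int_\Omega f(\omega,u(\omega))\,d\lambda$ is well defined as an element of $(-\infty,+\infty]$.

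The first main step is strong lower semi-continuity of $F$. Given $u_n\to u$ strongly in $L^p_\lambda$, I would first pass to a subsequence along which $F(u_n)$ converges to $\liminf_n F(u_n)$, and then extract a further subsequence (not relabelled) so that $u_n\to u$ pointwise $\lambda$-a.e.\ and $|u_n|^p\to |u|^p$ in $L^1_\lambda$. Hypothesis (i) then gives, for $\lambda$-a.e.\ $\omega$,
\[
\liminf_n\bigl(f(\omega,u_n(\omega))+a(\omega)+b|u_n(\omega)|^p\bigr)\ge f(\omega,u(\omega))+a(\omega)+b|u(\omega)|^p.
\]
Fatou's lemma, applied to this non-negative sequence of integrands, combined with $\int(a+b|u_n|^p)\,d\lambda\to \int(a+b|u|^p)\,d\lambda$, yields $\liminf_n F(u_n)\ge F(u)$.

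The second step is the passage to the weak topology. Convexity of $\xi\mapsto f(\omega,\xi)$ for $\lambda$-a.e.\ $\omega$ transfers by integration to convexity of $F$ on $L^p_\lambda(\Omega,\R^d)$. Combined with the strong lower semi-continuity just established, every sublevel set $\{F\le t\}$ is convex and strongly closed. By Mazur's lemma such sets are also weakly closed, which is precisely the desired weak lower semi-continuity on $L^p_\lambda(\Omega,\R^d)$.

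The only genuinely delicate point is the subsequence juggling in the first step: since $\liminf$ is defined along the original sequence, one has to realize it along a subsequence and only then extract the a.e.-convergent sub-subsequence, so that Fatou can be paired with the $L^1$-convergence of $|u_n|^p$. Everything else is a standard application of convex analysis in Banach spaces, and the reflexivity of $L^p_\lambda$ for $p\in(1,+\infty)$ is implicitly used only to guarantee the Mazur-type equivalence between strong and weak closedness of convex sets.
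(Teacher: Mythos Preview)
The paper does not actually prove this theorem: it is stated in the appendix as a classical semicontinuity result and used as a black box in Theorem~\ref{existsmax}. Your argument is the standard and correct proof of this fact: Fatou's lemma (with the $L^1$ compensating term $a+b|u|^p$) for strong lower semi-continuity, convexity of the integrand for convexity of $F$, and then the Hahn--Banach/Mazur principle that a convex strongly closed set is weakly closed.

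One small inaccuracy in your final comment: the equivalence between strong and weak closedness of convex sets holds in \emph{any} normed space by Hahn--Banach, so reflexivity of $L^p$ plays no role here. Reflexivity would matter if you wanted to argue via weakly convergent subsequences, but the sublevel-set formulation you chose avoids that entirely. Otherwise the subsequence bookkeeping in the Fatou step is handled correctly.
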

	
	\section{Existence of a Vitali covering for separable metric spaces}
	Let $(X,d)$  be a metric space. Let $\lambda$ be a Borel  measure such that $\lambda$ is finite on bounded sets.
	\begin{definition}
		We say that a family of sets $\mathcal{S}$ is fine at a point $x\in X$, if 
		\[\inf \{\mathrm{diam}(S) \, : \, S\in \mathcal S \ \text{and} \ x\in S\}=0\]
	\end{definition}
	
	\begin{definition}[see Definition 2.8.16 in \cite{FedSpringer1996}]\label{def:vitalicovering}
		We say that a family of sets $\mathcal{V}$ is a $\lambda-$Vitali covering if it satisfies the following properties:
		\begin{enumerate}
			\item $\mathcal{V}$ is a family of Borel sets;
			\item $\mathcal{V}$ is fine at each point $x\in X$;
			\item the following condition holds: if $\mathcal{S}\subset \mathcal V$, $Z\subset X$, and $\mathcal{S}$ is fine at each point $x\in Z$, then $\mathcal S$ has a countable disjoint sub-family which covers $\lambda $-almost all of $Z$.
		\end{enumerate}
	\end{definition}
	\begin{definition}
		Let $f:X\to \R\cup\{+\infty\}$  a $\lambda$- measurable function. Whenever we have a family of Borel sets $S$ which is fine at $x\in X$ we can define 
		\begin{multline}
			\mathcal V-\limsup\limits_{S\to x}\frac{1}{\lambda(S)}\int_S f  d\lambda:=\\ \limsup\limits_{\rho\to 0}\left\{\frac{1}{\lambda(S)}\int_S f  d\lambda \,: \, S\in\mathcal V, x\in S \ \text{and} \ \mathrm{diam}(S)<\rho\right\}.
		\end{multline}
		Similarly we define $\mathcal V-\liminf\limits_{S\to x}\int_S f  d\lambda$ and  $\mathcal V-\lim\limits_{S\to x}\int_S f  d\lambda$.
	\end{definition}
	\begin{definition}\label{def:lexicographic}
		Given $a:=(a_n)_{n\in \N}$ and $b:=(b_n)_{n\in\N}$, such that $a_n,b_n\in\N$ for every $n$, we say that $a<b$ with respect to the lexicographic order if there exists $k\ge 0$, such that $a_i=b_i$ for every $0\le i \le k-1$ and $a_{k}<b_k$.
	\end{definition}
	\begin{theorem}[see Theorem 2.9.8 in \cite{FedSpringer1996}] \label{thm:vitalicovering}
		Let $(X,d)$ a metric space and $\mathcal V$ a Vitali covering.	Let $f:X\to \R\cup\{+\infty\}$  a $\lambda$- measurable function such that \[\int_A |f|d\lambda<\infty, \] 
		for every bounded $\lambda$-measurable set $A$, then 
		\[\mathcal V-\lim_{S\to x}\frac{1}{\lambda(S)}\int_S f  d\lambda = f(x), \quad \text{for}\ \lambda \text{- a.e.} \ x \in X.\]
	In other words, for $\lambda$-a.e. $x\in X$, $x$ is a generalized Lebesgue point.
	\end{theorem}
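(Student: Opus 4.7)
The statement is a Lebesgue-type differentiation theorem along the Vitali covering $\mathcal V$, so the strategy is the classical one: approximate $f$ in $L^1$ by a continuous function, handle the continuous part by a direct oscillation estimate, and absorb the remainder via a weak-type maximal inequality for $\mathcal V$ in which property (3) of Definition \ref{def:vitalicovering} plays the decisive role.

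First I would reduce to the case $f\in L^1_\lambda$. Using separability of $X$ and the fact that $\lambda$ is finite on bounded sets, cover $X$ by countably many bounded $\lambda$-measurable sets $A_j$, on each of which $f$ is integrable by hypothesis; since a countable union of $\lambda$-null sets is $\lambda$-null, it suffices to verify the conclusion a.e. on each $A_j$, and splitting $f=f^+-f^-$ we may further assume $f\ge 0$. For any $\delta>0$, by density of bounded continuous functions in $L^1_\lambda$ (which on a separable metric space with a Borel measure finite on bounded sets follows from Lusin's theorem together with inner regularity), write $f=g+h$ with $g$ bounded continuous and $\|h\|_{L^1_\lambda}<\delta$. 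The continuous piece is then immediate: whenever $x\in S\in \mathcal V$ and $\mathrm{diam}(S)<\rho$,
\[\left|\frac{1}{\lambda(S)}\int_S g\,d\lambda - g(x)\right| \le \sup_{d(y,x)\le \rho}|g(y)-g(x)|\longrightarrow 0\]
as $\rho\to 0$ by continuity of $g$ at $x$, so $\mathcal V-\lim_{S\to x}\frac{1}{\lambda(S)}\int_S g\,d\lambda = g(x)$ for every $x\in X$.

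The decisive ingredient is the weak-type maximal estimate
\[\lambda\left\{x:\mathcal V-\limsup_{S\to x}\frac{1}{\lambda(S)}\int_S |h|\,d\lambda > \alpha\right\} \le \frac{\|h\|_{L^1_\lambda}}{\alpha}, \quad \alpha>0.\]
Call this set $E_\alpha$. By the very definition of the $\mathcal V$-limsup, for every $x\in E_\alpha$ there exist arbitrarily small sets $S\in \mathcal V$ with $x\in S$ and $\frac{1}{\lambda(S)}\int_S |h|\,d\lambda>\alpha$; hence the subfamily $\mathcal S:=\{S\in\mathcal V:\frac{1}{\lambda(S)}\int_S |h|\,d\lambda>\alpha\}$ is fine at every point of $E_\alpha$ in the sense of Definition \ref{def:vitalicovering}. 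Property (3) of the Vitali covering applied to $\mathcal S$ and $Z:=E_\alpha$ then yields a countable pairwise disjoint family $\{S_i\}\subset \mathcal S$ covering $\lambda$-almost all of $E_\alpha$, and disjointness together with the defining inequality on each $S_i$ gives
\[\lambda(E_\alpha)\le \sum_i \lambda(S_i) \le \frac{1}{\alpha}\sum_i \int_{S_i}|h|\,d\lambda \le \frac{\|h\|_{L^1_\lambda}}{\alpha}.\]

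To conclude, combine the two parts via the triangle inequality: for every $\alpha>0$,
\[\left\{x: \mathcal V-\limsup_{S\to x}\left|\tfrac{1}{\lambda(S)}\int_S f\,d\lambda - f(x)\right| > 2\alpha\right\}\subseteq E_\alpha \cup \{|h|>\alpha\},\]
whose $\lambda$-measure is at most $2\|h\|_{L^1_\lambda}/\alpha \le 2\delta/\alpha$ by the weak-type estimate and Markov's inequality. Letting $\delta\to 0$ with $\alpha$ fixed shows this set is $\lambda$-null for every $\alpha$, and intersecting over $\alpha=1/k$, $k\in\N$, completes the proof. The main obstacle is the weak-type step: it is the only place where property (3) of the Vitali covering is invoked, and it is precisely where one upgrades a pointwise supremum-type condition to a quantitative $L^1$-bound on its level sets.
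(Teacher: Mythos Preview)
The paper does not supply its own proof of this theorem: it is simply quoted from Federer (Theorem~2.9.8 in \cite{FedSpringer1996}), so there is no in-paper argument to compare against. Your proposal is the standard route to such differentiation theorems and is correct in spirit and in all essential steps; the decisive use of property~(3) of Definition~\ref{def:vitalicovering} to obtain the weak-type $(1,1)$ bound for the $\mathcal V$-maximal function is exactly the mechanism Federer exploits.

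Two small technical remarks. First, you invoke separability of $X$ to localize to bounded pieces, but the statement as written does not assume it; in the paper this is harmless because the theorem is only applied on products of Polish spaces (and Proposition~\ref{prop:existenceVitali}, which supplies the Vitali covering, already needs separability), but if you want the full generality of the quoted statement you should either drop the localization and work directly with the hypothesis ``$\int_A|f|\,d\lambda<\infty$ for every bounded $A$'' together with $\mathrm{diam}(S)\to 0$, or add $\sigma$-boundedness as an explicit assumption. Second, in the weak-type step the superlevel set $E_\alpha$ is not obviously $\lambda$-measurable; this does not matter because property~(3) is stated for an arbitrary subset $Z\subset X$, so the disjoint countable subfamily $\{S_i\}$ still covers $E_\alpha$ up to a $\lambda$-null set and one obtains the bound for the outer measure $\lambda^*(E_\alpha)$, which is all that is needed in the final Borel--Cantelli style conclusion.
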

	\begin{proposition}\label{prop:existenceVitali}
		Let $(X,d)$ be a separable metric space and let  $0<r<\frac{1}{3}$. Then there exists a countable family $\{Q_{k,i_k} \, : \,  k\in \N, i_k\in N_k\subset \N_{>0}  \}$ satisfying the following properties:
		\begin{enumerate}
			\item $X=\bigcup_{i_k\in N_k} Q_{k,i_k} $, for every $k\in\N$;
			\item  for $k\ge m$, $Q_{k,i_k}\cap Q_{m,i_m}=\emptyset$ or $Q_{k,i_k}\subset  Q_{m,i_m}$ ;
			\item for every $k\in\N$ and $i_k\in N_k$ there exists a point $x_{k,i_k}$ such that 
			\[\Qkik \subset \overline {B(\xkik, Cr^k)},\]
			where $C>0$ is a  constant depending only on $r$\
			
		\end{enumerate}
		In particular, given any Borel measure $\lambda$ on $X$, the family $\{Q_{k,i_k} \, : \,  k\in \N, i_k\in N_k\subset \N_{>0}  \}$ is a $\lambda$-Vitali covering in the sense of Definition \ref{def:vitalicovering}.
	\end{proposition}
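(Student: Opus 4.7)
The plan is to construct the family $\{\Qkik\}$ inductively, level by level, by a nested ball-carving procedure: at level $0$ we carve $X$ with balls of radius $1$ around a maximal $1$-separated net, and at each subsequent level we refine each already-built cube using a maximal $r^k$-separated net inside it. The key preliminary observation is that in a separable metric space any $\delta$-separated subset is automatically countable, since the $\delta/3$-balls around its points are pairwise disjoint and each must contain a point of a fixed countable dense set.

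At level $0$, choose a maximal $1$-separated subset $\{x_{0,i_0}\}_{i_0\in N_0}$ of $X$ with $N_0\subseteq\N_{>0}$, so that by maximality $X=\bigcup_{i_0}\overline{B(x_{0,i_0},1)}$, and set
\[Q_{0,i_0}:=\overline{B(x_{0,i_0},1)}\setminus\bigcup_{j<i_0}Q_{0,j}.\]
For the inductive step, assume the cubes at level $k-1$ are built. Inside each cube $Q_{k-1,i_{k-1}}$ choose a maximal $r^k$-separated subset $\{\xkik\}_{i_k\in N_k^{(i_{k-1})}}\subseteq Q_{k-1,i_{k-1}}$; by maximality the balls $\overline{B(\xkik,r^k)}$ cover $Q_{k-1,i_{k-1}}$. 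Gathering all indices across parents into a countable set $N_k$ and writing $\pi(i_k)$ for the parent index, define
\[\Qkik:=\bigl(\overline{B(\xkik,r^k)}\cap Q_{k-1,\pi(i_k)}\bigr)\setminus\bigcup_{\substack{j<i_k\\ \pi(j)=\pi(i_k)}}Q_{k,j}.\]
All cubes are Borel by induction, and properties (1)--(3) follow at once: the covering in (1) is inherited from the $r^k$-ball cover of each parent; the dichotomy in (2) holds because each $\Qkik$ is contained in a unique ancestor chain, so any two cubes at different levels are either in the same chain (hence nested) or have disjoint ancestors from some level on (hence disjoint); and the radius bound in (3) holds with $C=1$ since $\Qkik\subseteq\overline{B(\xkik,r^k)}$ directly from the construction.

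For the Vitali property, fineness at any $x\in X$ follows because at each level $k$ there is a unique cube $Q_{k,i_k(x)}$ containing $x$ and $\mathrm{diam}(Q_{k,i_k(x)})\leq 2r^k\to 0$. Given a subfamily $\mathcal{S}\subseteq\mathcal{V}$ fine at every point of some $Z\subseteq X$, let $\mathcal{S}^{*}$ be the elements of $\mathcal{S}$ that are maximal under set inclusion. Because each $\Qkik$ has only finitely many ancestors in $\mathcal{V}$ (one at each of the levels $0,\ldots,k-1$), every element of $\mathcal{S}$ sits inside some element of $\mathcal{S}^{*}$; property (2) then guarantees that distinct elements of $\mathcal{S}^{*}$ are disjoint, and fineness gives $Z\subseteq\bigcup\mathcal{S}=\bigcup\mathcal{S}^{*}$. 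Countability of $\mathcal{V}$ transfers to $\mathcal{S}^{*}$, so the Vitali condition holds in the strong form of an exact disjoint cover of $Z$.

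The main technical point requiring care is bookkeeping: organizing the doubly-indexed family $\{N_k^{(i_{k-1})}\}_{i_{k-1}}$ into a single countable index set $N_k$ with a coherent parent map $\pi$, and verifying that the inductive set subtractions preserve Borel measurability. These are routine but need to be written out. The hypothesis $r<1/3$ plays no role in the covering part of the argument sketched above; it would appear if one wished to strengthen (3) to a two-sided bound $\overline{B(\xkik,cr^k)}\subseteq\Qkik\subseteq\overline{B(\xkik,Cr^k)}$, where the small inner radius ensures that $B(\xkik,cr^k)$ does not escape the parent cube through the boundary left by earlier carving.
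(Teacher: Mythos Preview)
Your proof is correct and takes a simpler route than the paper. You build the cubes by a direct nested ball-carving: at each level $k$, pick a maximal $r^k$-net \emph{inside each parent cube} and carve that parent with closed $r^k$-balls around the net points; this gives $\Qkik\subseteq\overline{B(\xkik,r^k)}$ immediately, so (3) holds with $C=1$ and the hypothesis $r<\tfrac13$ is indeed unused. The paper instead follows the K\"aenm\"aki--Rajala--Suomala dyadic-cube construction: a single increasing chain of maximal $r^k$-nets $A_0\subset A_1\subset\cdots$ is built in all of $X$, each new point is assigned a parent via a lexicographic rule on the set $\{x_{k,j}:d(x_{k,j},x_{k+1,i_{k+1}})<r^k+\eps^{k+1}\}$ with $r<\eps<2r$, and $\Qkik$ is defined as the closure of the descendant set of $\xkik$ minus the lexicographically earlier cubes; the outer-ball bound then comes from summing parent--child distances along a chain, giving $C=\tfrac{3}{1-r}$, and this is where $r<\tfrac13$ enters. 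The Vitali verification is essentially the same in both proofs: your subfamily $\mathcal S^{*}$ of inclusion-maximal elements coincides with the paper's level-by-level discarding procedure. The payoff of the paper's more elaborate construction is that it would additionally furnish an inner-ball bound $B(\xkik,cr^k)\subseteq\Qkik$ (the point of the KRS machinery, and the real reason for $r<\tfrac13$), which your carving does not give; since the proposition as stated only asks for the outer bound, your more elementary argument is sufficient here.
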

	\begin{proof} Fix a point $x_{0,1}$ and define a set of points 
		\[A_0:=\{ x_{0,i_0} \, : \, i_0\in N_0\subset \N_{>0}  \}\]
		such that $x_{0,1}\in A_0$, $d(x_{0,i}, x_{0,j})\ge 1$ for every $i,j\in N_0$, $i\neq j$ and  $A_0$ is maximal. 
		We observe that the separability of $X$ ensures that the set $A_0$ is countable. Then, for each $k\in \N$ we define $A_k:=\{ \xkik \, : \, i_k\in N_k\subset \N_{>0}  \}$ such that $A_{k-1}\subset A_k$, $d(x_{k,i}, x_{k,j})\ge r^k$ for every $i,j\in N_k$, $i\neq j$ and  $A_k$ is maximal.\\
		\textbf{Step 1:} We, now, assign to every $\xkik\in A_k\setminus A_{k-1}$ a unique parent  $x_{k-1,i_{k-1}}\in A_{k-1}$. We start with the point $x_{1,i_1}\in A_1\setminus A_0$. 
		Let $0<r<\eps<2r$. We define the set \[G_{0,i_1}:= \{ x_{0,j} \, : \,  d(\xoneione, x_{0,j})<1+\eps  \}.  \]Notice that $G_{0,i_1}$ is not empty thanks to the maximality of $A_0$.
		The parent of $x_{1,i_1}$ will be the unique point $x_{0,i_0}$, such that \[i_0:=\min \{j\in N_0 \, : \,  x_{0,j}\in G_{0,i_1}\}.\]
		Then we assign to each point a \textbf{word}, i.e. an infinite sequence of natural numbers in the following way. If $x_{0,i_0}\in A_0$ we relabel it $x_{(i_0,0,\dots)}$, i.e. $i_l=0$ for every $l\ge 1$. If $\xoneione\in A_1\setminus A_0$ we relabel it $x_{(i_0,i_1,0,\dots)}$, where $i_0$ is the index of the parent of $\xoneione$ and $i_l=0$ for every $l\ge 2$. For any $k\in \N$, we consider $x_{k+1,i_{k+1}}\in A_{k+1}\setminus A_{k}$ and the set 
		\[G_{k,i_{k+1}}:= \{ x_{k,j}\in A_k \, : \,  d(x_{k,j}, x_{k+1,i_{k+1}})<r^k+\eps^{k+1}  \}.  \] The parent of $x_{k+1,i_{k+1}}$ is the point $\xkik\in G_k$ with label $(i_0,i_1,\dots,i_{k},0,\dots)$, which has the smallest label with respect to the lexicographic order (see Definition \ref{def:lexicographic}), among all the labels of the points $x_{k,j}\in G_{k,i_{k+1}}$. We observe that by construction 
		\begin{itemize}
			\item if $ x_{k,i_k}\in A_k$, the label of $\xkik$ will be $(i_0,\dots, i_k,0,\dots)$, with $i_j\ge 0$ for every $0\le j\le k$ and $i_j=0$, for every $j>k$;
			\item to each $\xkik$ there exist unique $i_0,\dots, i_{k}\in\N^{k}$ such that $\xkik=x_{(i_0,\dots, i_k,0,\dots)}$. \textbf{Notation:} for this reason a point can be denoted either as $\xkik$ or $x_{(i_0,i_1,\dots, i_k,0,\dots)}$, according to the necessity. 
		\end{itemize}
		
		\textbf{Step 2:} Define the sets $Q_{k,i_k}$. Starting from $k=0$, for $i_0\in N_0$, we define 
		\begin{align*}
			&Q_{0,i_0}:=Q_{(i_0,0\dots)}:=\\&\overline{\{ x_{m,i_m}=x_{(i_0,j_1\dots, j_m, 0, \dots)} \, : \, m>0 \}}\setminus \bigcup_{1\le j_0<i_0}Q_{(j_0,0\dots)},
		\end{align*}
		where $\{ x_{m,i_m}=x_{(i_0,j_1\dots,j_m, 0, \dots)} \, : \, m>0 \}$ is the set of \textbf{children} of $x_{0,i_0}$.
		For any $k$ and for any point $x_{k,i_k}=x_{(i_0,\dots,i_k,0\dots)}$ we define 	$\Qkik=Q_{(i_0,\dots,i_k,0\dots)}$ as 
		{\small \begin{align*}
				&Q_{(i_0,\dots,i_k,0\dots)}:= Q_{(i_0,\dots,i_{k-1},0\dots)}\cap
				\\&\overline{\{ x_{m,i_m}=x_{(i_0,\dots,i_k,j_{k+1}, \dots, j_m, 0, \dots)} \, : \, m>k \}}\setminus \bigcup_{(j_0,\dots,j_k,0\dots)<(i_0,\dots,i_k,0\dots)}Q_{(j_0,\dots,j_k,0\dots)},
		\end{align*}}where we have intersected we the set generated by the \textbf{parent} of $\xkik$ and where $(j_0,\dots,j_k,0\dots)<(i_0,\dots,i_k,0\dots)$ has to be meant with respect to the lexicographic order.

		\textbf{Step 3:} We prove Properties 1.-3. 
		Property 2. follows directly by the definition of the sets $\Qkik$. Concerning Property 1, we fix a level $k\in \N$ and  we observe that \footnote{Since at each step $k$, every $\Qkik$ is defined by taking the intersection with the $Q_{k-1,i_{k-1}}$ generated by the parent, to be rigorous, one should first prove that $X=\bigcup_{i_0\in N_0}Q_{0,i_0}$ and then proceed by induction.} 
		\begin{equation*}\label{eq:equalityunion}
			\bigcup_{i_k\in N_k} Q_{k,i_k}=\bigcup_{i_k\in N_k}\overline{\{ x_{m,i_m}=x_{(i_0,\dots,i_k,j_{k+1}, \dots, j_m, 0, \dots)} \, : \, m>k \}}.
		\end{equation*}
		Let $x\in X$, define a sequence $\{x_{l,i_l}\}$ such that 
		\begin{itemize}
			\item $x_{l,i_l}\in A_l$, for every $\in\N$;
			\item $d(x,x_{l,i_l})<r^l$.
		\end{itemize}
		Then we have that 
		\[d(x_{l,i_l},x_{l+1,i_{l+1}})<d(x_{l,i_l},x)+d(x,x_{l+1,i_{l+1}})<r^l+r^{l+1}<r^l+\eps ^ {l+1},\]
		which means that $x_{l,i_l}\in G_{l,i_{l+1}} $ and thus it is a competitor to be the parent of $x_{l+1,i_{l+1}}$. Therefore the label $(i_0,\dots,i_l,0,\dots)$ of $x_{l,i_l}$ is bigger or equal, with respect to the lexicographic order, to the label of  $x_{l+1,i_{l+1}}$. In particular if we consider $x_{0,i_0}$, the first point of the sequence, for every $x_{l,i_l}$ , the first index of the label will be at most $i_0$. This implies, since $i_0\in \N$, that there exists a subsequence of $\{x_{l,i_l}\}$ such that all the points have the same first index in the label. Continuing inductively we obtain a subsequence such that the first $k$ indexes of the label are fixed. If the first letters are, let's say $i_0,\dots, i_k$ this means that our subsequence of $\{x_{l,i_l}\}$ is included in the set $\{x_{(i_0,\dots,i_k,j_{k+1}, \dots, j_m, 0, \dots)} \, : \, m>k \}$.

		We now prove Property 3. We observe that if $$x_{m,i_m}\in  \{ x_{m,j_m}=x_{(i_0,\dots,i_k,j_{k+1}, \dots, j_m, 0, \dots)} \, : \, m>k \},$$ that is $x_{m,i_m}$ is a child of $\xkik$, then 
		\[d(\xkik,x_{m,i_m})<\sum_{j=k}^m r^j + \eps ^{j+1} <3\sum_{j=k}^m r^j <3\frac{1}{1-r}r^k. \]
		This implies that 
		\[Q_{k,i_k}\subset\overline{B(\xkik,Cr^k)},\]
		with $C\ge\frac{3}{1-r}$.
		
		\textbf{Step 4:} We conclude by showing that the family is a Vitali covering. By construction $Q_{k,i_k}$ are Borel sets  and $\mathcal{V}:=\{Q_{k,i_k}\}$ is fine at each $x\in X$, by Property 3. Let $\mathcal S \subset \mathcal V$ and $Z\subset X$, such that $\mathcal S$ is fine at every $z\in Z$. We construct a countable disjoint sub-family $\mathcal S'$, such that $\bigcup_{S\in \mathcal S'}S=Z$.
		One can  write $\mathcal{S}:=\{Q_{k,i_k} \, : \, k\in \N, i_k\in M_k\subset N_k  \}$, where $M_k$ can also be empty. 
		Let us start from $k=0$ and consider the family $\{Q_0,i_0\}_{i_0\in M_0}$. Then $Q_{0,i}\cap Q_{0,j}=\emptyset $, for every $i\neq j \in M_0$. Then we consider $\{Q_{1,i_1}\}_{i_1\in M_1}$ and we discard all the $Q_{1,i_1}$s such that $Q_{1,i_1}\subset Q_{0,i_0}$, for some $i_0$. We do the same for every $k$. Then we get $\mathcal S':=\{Q_{k,i_k} \, : \, k\in \N, i_k\in M'_k\subset M_k  \}$ , where $M'_k$ is the set of indexes at level $k$ which were not discarded. 
	\end{proof}
	
	\section{$\lambda$-essential relaxation}
	Let $Z$ be a Polish space and let $\lambda	\in \Prob(Z)$ . 
	Let $f: Z \to [0,+\infty] $ be a lower semi-continuous function. 
Define 
	\begin{equation}\label{effetilda}
		\tilde f (z):= \sup \{t \, : \, \, \exists r>0, \, \text{s.t.} \, \lambda (B_r(z)\cap \{f>t\} )= \lambda(B_r(z)) \}.
	\end{equation}
	\begin{theorem} \label{propertiestilde}  The function $\tilde f$ satisfies the following: 
		\begin{enumerate}
			\item $\tilde{f}(z)= +\infty$ if $z \not \in \spt \lambda$;
			\item $f(z)\leq \tilde f(z)$;
			\item if $f$ is $\lambda$-integrable, then $f(z)=\tilde f (z)$, \ $\lambda-a.e.$;
			\item $\tilde f$ is l.s.c. .
		\end{enumerate} 
	\end{theorem}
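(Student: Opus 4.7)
\textbf{Plan of proof of Theorem \ref{propertiestilde}.}

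I will treat the four statements in the order listed, handling (1), (2), (4) by direct arguments from the definition of $\tilde f$ and reserving the Vitali covering machinery for (3).

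For (1), if $z\notin\spt\lambda$ there exists $r>0$ with $\lambda(B_r(z))=0$, and then for every $t\in\mathbb{R}$ the trivial identity $\lambda(B_r(z)\cap\{f>t\})=0=\lambda(B_r(z))$ shows $t\le\tilde f(z)$; hence $\tilde f(z)=+\infty$. For (2), I use only the lower semicontinuity of $f$: for any $t<f(z)$ the open set $\{f>t\}$ contains $z$, hence contains some $B_r(z)$, and this gives $t\le\tilde f(z)$. Letting $t\nearrow f(z)$ yields $f(z)\le\tilde f(z)$. For (4), I will show that $\{\tilde f>t\}$ is open for every $t\in\mathbb{R}$: if $\tilde f(z)>t$ pick $r>0$ with $\lambda(B_r(z)\setminus\{f>t\})=0$; for any $w\in B_{r/2}(z)$ the inclusion $B_{r/2}(w)\subset B_r(z)$ then forces $\lambda(B_{r/2}(w)\setminus\{f>t\})=0$, so $\tilde f(w)\ge t$. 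This gives $B_{r/2}(z)\subset\{\tilde f\ge t\}$, and a small extra argument (repeating with $t'\in(t,\tilde f(z))$) gives $B_{r/2}(z)\subset\{\tilde f> t\}$.

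The main content of the theorem is (3), which is also the main obstacle. The inequality $f\le \tilde f$ being already in hand, I need to show $\tilde f\le f$ holds $\lambda$-a.e.\ in $\spt\lambda$ (outside of which is negligible). The idea is to apply the Lebesgue-type differentiation Theorem \ref{thm:vitalicovering} to the Vitali covering $\mathcal V=\{Q_{k,i_k}\}$ supplied by Proposition \ref{prop:existenceVitali}. Since $f\in L^1_{\mathrm{loc}}(\lambda)$ by assumption, for $\lambda$-a.e.\ $z$ one has
\[
\mathcal V\text{-}\lim_{Q\to z}\frac{1}{\lambda(Q)}\int_Q f\,d\lambda=f(z).
\]
I will argue by contradiction: suppose $\tilde f(z)>f(z)$ at such a generalized Lebesgue point. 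Pick $t$ with $f(z)<t<\tilde f(z)$ and $r>0$ such that $\lambda(B_r(z)\cap\{f\le t\})=0$. By property~(3) of Proposition \ref{prop:existenceVitali}, every $Q\in\mathcal V$ containing $z$ with diameter below a suitable threshold lies inside $B(x_Q,Cr^k)\subset B(z,2Cr^k)\subset B_r(z)$. On any such $Q$ we then have $f>t$ $\lambda$-a.e., so the average of $f$ over $Q$ is at least $t$. Passing to the $\mathcal V$-limit yields $f(z)\ge t>f(z)$, a contradiction.

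The delicate point in carrying this out is to make sure the Vitali-style limit along $\mathcal V$ is the correct object to use (rather than averages on Euclidean balls, which are unavailable in our Polish setting), and to handle the degenerate case $\lambda(Q)=0$ for small $Q$'s containing $z$. The latter is harmless because Theorem \ref{thm:vitalicovering} is stated precisely for this Vitali family; the set of $z$ where the generalized Lebesgue identity fails is $\lambda$-null, so in particular null in $\spt\lambda$. Combining this with (1) (which is irrelevant $\lambda$-a.e.) completes the proof of (3).
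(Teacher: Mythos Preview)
Your proposal is correct and follows essentially the same route as the paper: items (1), (2), (4) are handled directly from the definition of $\tilde f$ (the paper proves (4) via sequences rather than by showing $\{\tilde f>t\}$ is open, but the underlying ball-inclusion argument is identical), and item (3) is proved exactly as in the paper by showing that any point with $f(z)<\tilde f(z)$ fails to be a generalized Lebesgue point for the Vitali covering of Proposition~\ref{prop:existenceVitali}, invoking Theorem~\ref{thm:vitalicovering}. If anything, your version is slightly more careful in (2) and (3), taking $t$ strictly between $f(z)$ and $\tilde f(z)$ rather than $t=\tilde f(z)$ as the paper does.
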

	\begin{proof} The first point follows directly from the definition since any $t\in \R$ fulfils the  requirement. 
		Denote by 
\[S_f (z):=  \{t \, : \, \exists r>0, \, \text{s.t.} \, \lambda (B_r(z)\cap \{ f>t\} )= \lambda(B_r(z)) \}.\]
	If $f$ is lower semi-continuous, there exists $r>0$ such that $B(z,r) \subset \{ f>f(z)\}$, which means that $f(z) \in S_f (z)$ and thus $f(z) \leq \tilde{f} (z)$ .
		
		Let $z_0$ be a point such that $f(z_0)<\tilde f(z_0)$. We show that $z_0$ cannot be a generalized Lebesgue point of $f$.  Thanks to Theorem \ref{thm:vitalicovering}, this conclude the proof of 3. 
%

	By the definition of $\tilde f$  there exists $r>0$ such that the  ball $B(z_0,r)$ is $\lambda$-essentially contained in 
	$\{f>\tilde f(z_0)\}$. 

		Let $\mathcal{V}$ be a Vitali covering of $Z$, which exists by Proposition \ref{prop:existenceVitali}.
		Set $\delta_0:=\frac r2$. Now for all $S\in\mathcal{V}$ such that $z_0\in S$ and $diam(S)<\delta_0$ we have 
		$S\subset B(z_0,r)$ and thus every such $S$ is $\lambda$-essentially contained in the set $\{f>\tilde f(z_0)\}$.
		Now
		\begin{align*}
			&\mathcal{V}- \lim_{S\to z_0 }\int_S f(z) d\lambda\\
			&=\lim_{\rho\to 0}\left\{\frac{1}{\lambda(S)}\int_S f (z) d\lambda \, : \, S \in\mathcal V, z_0\in S \ \text{and} \ \mathrm{diam}(S)<\rho \right\}\\
			&=\lim_{\rho\to 0, \rho<\delta_0}\left\{\frac{1}{\lambda(S)}\int_S f (z) d\lambda \, : \, S \in\mathcal V, z_0\in S \ \text{and} \ \mathrm{diam}(S)<\rho \right\}\\
			&>f(z_0).
		\end{align*}
		

		Let $z_n \to z$ we aim to prove that $\tilde{f} (z) \leq \liminf_{n\to \infty} \tilde{f} (z_n) .$
		Let $\alpha>0$ and let $r_\alpha$ be such that $B(z, r_\alpha)$ is $\lambda$-essentially contained in 
		$\{\tilde{f}(z) -\alpha< f \}$. Since $z_n \to z$ for $n$ big enough $B(z_n,\frac{r_\alpha}{2}) \subset B(z, r_\alpha)$ and this, by definition, implies that $\tilde{f}(z_n) > \tilde{f}(z) -\alpha$. Since $\alpha$ is arbitrary we conclude.

	\end{proof}
	\noindent{{\bf{Acknowledgments:}} 
		C.B. was partially funded by \emph{Deutsche Forschungsgemeinschaft (DFG -- German Research Foundation) -- Project-ID 195170736 -- TRR109}.
		
		C.B. and L.D.P. acknowledge the support of GNAMPA-INDAM. 
		
		The work of L.D.P. is partially financed by the {\it``Fondi di ricerca di ateneo''} of the University of Firenze and partially financed by the EU-Next Generation EU, (Missione 4, Componente 2, Investimento 1.1 {\it Progetti di Ricerca di Rilevante Interesse Nazionale} (PRIN), CUPB53D23009310006 - (2022J4FYNJ), \emph{Variational methods for stationary and evolution problems with singularities and interfaces} .
		
			A.K. is currently funded by a scholarship of the Dipartimento di Matematica e informatica of the Università di Firenze. 	
	Finally, the authors thank Tapio Rajala for the generous help with Proposition \ref{prop:existenceVitali}.


		

	\end{document}